\newcommand{\ndN}{\mathbb{N}}
\newcommand{\ndR}{\mathbb{R}}
\newcommand{\ndK}{\mathbb{K}}
\renewcommand{\Pr}[1]{\mathbb{P}\left(#1\right)}
\newcommand{\Ex}[1]{\mathbb{E}[#1]}
\newcommand{\one}{\mathbbm{1}}
\newcommand{\w}{w}
\newcommand{\sca}{\kappa}
\newcommand \isom{
{\xrightarrow{\,\smash{\raisebox{-0.65ex}{\ensuremath{\scriptstyle\sim}}}\,}}
}
\newcommand{\rhoc}{\rho}
\newcommand{\rhob}{R}
\newcommand{\lambdac}{\lambda}
\newcommand{\cC}{\mathcal{C}}
\newcommand{\cK}{\mathcal{K}}
\newcommand{\cF}{\mathcal{F}}
\newcommand{\cB}{\mathcal{B}}
\newcommand{\cS}{\mathcal{S}}
\newcommand{\cG}{\mathcal{G}}
\newcommand{\cR}{\mathcal{R}}
\newcommand{\cA}{\mathcal{A}}
\newcommand{\cH}{\mathcal{H}}
\newcommand{\cD}{\mathcal{D}}
\newcommand{\cX}{\mathcal{X}}
\newcommand{\cP}{\mathcal{P}}
\newcommand{\cY}{\mathcal{Y}}
\newcommand{\mA}{\mathsf{A}}
\newcommand{\mC}{\mathsf{C}}
\newcommand{\mB}{\mathsf{B}}
\newcommand{\mG}{\mathsf{G}}
\newcommand{\mF}{\mathsf{F}}
\newcommand{\mH}{\mathsf{H}}
\newcommand{\mT}{\mathsf{T}}
\newcommand{\mS}{\mathsf{S}}
\newcommand{\mP}{\mathsf{P}}
\newcommand{\me}{\mathsf{e}}
\newcommand{\CRT}{\mathcal{T}_\me}
\newcommand{\cE}{\mathcal{E}}
\newcommand{\cBa}{\cD}
\newcommand{\cBra}{\cD^\bullet}
\newcommand{\Ba}{D}
\newcommand{\Bra}{D^\bullet}
\newcommand{\plh}{%
  {\ooalign{$\phantom{0}$\cr\hidewidth$\scriptstyle\times$\cr}}%
}
\newcommand{\He}{\text{H}}
\newcommand{\Di}{\text{D}}
\newcommand{\shp}{\mathsf{sp}}
\newcommand{\lhb}{\mathsf{lb}}
\newcommand{\Forb}{\mathsf{Forb}}
\newcommand{\spa}{\mathsf{span}}
\newcommand{\cCb}{\mathcal{C}^\bullet}
\newcommand{\cTb}{\mathcal{A}}
\newcommand{\eqdist}{\,{\buildrel (d) \over =}\,}
\newcommand{\convdislong}{\,{\buildrel (d) \over \longrightarrow}\,}
\newcommand{\disto}{\mathsf{dis}}
\newcommand{\Set}{\textsc{SET}}
\newcommand{\Seq}{\textsc{SEQ}}
\newtheorem{theorem}{Theorem}[section]
\newtheorem{corollary}[theorem]{Corollary}
\newtheorem{proposition}[theorem]{Proposition}
\newtheorem{lemma}[theorem]{Lemma}
\newtheorem{definition}[theorem]{Definition}
\numberwithin{equation}{section}
\newcommand{\keyword}[1]{\textbf{#1}~}
\newcommand{\IF}{\keyword{if}}
\newcommand{\THEN}{\keyword{then} }
\newcommand{\ELSE}{\keyword{else}}
\newcommand{\ENDIF}{\keyword{endif} }
\newcommand{\RETURN}{\keyword{return}}
\newcommand{\FOR}{\keyword{for}}
\newcommand{\EACH}{\keyword{each}}
\newcommand{\SPENDFOR}{\textbf{end for}~ }
\newcommand{\TO}{\keyword{to}}
\newcommand{\Pois}[1]{{{\mathsf{Pois}}\left(#1\right)}}
\newcommand{\Bern}[1]{{{\mathsf{Bern}}\left(#1\right)}}
\title{Scaling Limits of Random Graphs from Subcritical Classes}
\author{Konstantinos Panagiotou}
\address[Konstantinos Panagiotou]{Institute of Mathematics, Ludwig-Maximilians-Universit\"at, Theresienstr.\ 39, 80333 Munich, Germany}
\email{kpanagio@math.lmu.de}
\author{Benedikt Stufler}
\address[Benedikt Stufler]{Institute of Mathematics, Ludwig-Maximilians-Universit\"at, Theresienstr.\ 39, 80333 Munich, Germany}
\email{stufler@math.lmu.de}
\author{Kerstin Weller}
\address[Kerstin Weller]{Institute of Theoretical Computer Science, ETH, 8092 Zurich, Switzerland}
\email{kerstin.weller@inf.ethz.ch}
\date{\today}
\begin{document}

\begin{abstract}
We study the uniform random graph $\mC_n$ with $n$ vertices drawn from a subcritical class of connected graphs. Our main result is that the rescaled graph $\mC_n / \sqrt{n}$ converges to the Brownian Continuum Random Tree $\CRT$ multiplied by a constant scaling factor that depends on the class under consideration. In addition, we provide subgaussian tail bounds for the diameter $\Di(\mC_n)$ and height $\He(\mC_n^\bullet)$ of the rooted random graph $\mC_n^\bullet$.  We give analytic expressions for the scaling factor of several classes, including for example the prominent class of outerplanar graphs. Our methods also enable us to study first passage percolation on $\mC_n$, where we show the convergence to $\CRT$ under an appropriate rescaling.
\end{abstract}

\maketitle

\section{Introduction}

Let $G$ be a connected graph with vertex set $V(G)$ and edge set $E(G)$. We can associate in a natural way a metric space $(V(G), d_G)$ to $G$, where $d_G(u,v)$ is the number of edges on a shortest path that contains $u$ and $v$ in $G$. In this work we study the case where $G$ is a random graph, and we consider several properties of the associated metric space as the number of vertices of~$G$ becomes large.

In the series of seminal papers~\cite{MR1085326,MR1166406,MR1207226} Aldous studied the fundamental case of $G$ being a critical Galton-Watson random tree with $n$ vertices, where the offspring distribution has finite nonzero variance. Among other results, he showed that the asymptotic properties of the associated metric space admit an \emph{universal} description: they can be depicted, up to an appropriate rescaling, in terms of “continuous trees” whose archetype is the so-called \emph{Brownian Continuum Random Tree} (CRT for short). Since Aldous's pioneering work, the CRT has been identified as the limiting object of many different classes of discrete structures, in particular trees, see e.g.\ Haas and Miermont \cite{MR3050512} and references therein, and planar maps, see e.g.\ Albenque and Marckert \cite{MR2438817}, Bettinelli \cite{Be}, Caraceni \cite{Ca}, Curien, Haas and Kortchemski \cite{CuHa} and Janson and Stefansson \cite{JaSi}. 

Although the aforementioned papers identify the CRT as the universal limiting object in various settings, much less is known about the scaling limit of random graphs from complex graph classes. In this paper we study in a unified way the asymptotic distribution of distances in random graphs from so-called \emph{subcritical classes}, where, informally, a class is called subcritical if for a typical graph with $n$ vertices the largest block (i.e.~inclusion maximal 2-connected subgraph) has $O(\log n)$ vertices. Random graphs from such classes have been the object of intense research in the last years, see e.g.~\cite{MR3184197,MR2534261,MR2873207,MR2675698}, especially because of their close connection to the class of planar graphs. Prominent examples of classes  that are subcritical are outerplanar and series-parallel graphs. However, with the notable exception of~\cite{MR3184197}, most research on such random graphs has focused on \emph{additive} parameters, like the number of vertices of a given degree; the fine study of \emph{global} properties, like the distribution of the distances, poses a significant challenge.

In the present paper we study the random graph $\mC_n$ drawn uniformly from the set of connected graphs with $n$ vertices of a subcritical class $\cC$. Our first main result is Theorem~\ref{te:maintheorem}, which shows that, up to an appropriate rescaling, the associated metric space converges in distribution to a multiple of the CRT. Postponing the introduction of the appropriate notation to later sections (see the outline), our main result asserts that there is a constant $s = s(\cC) > 0$  such that
\[
	(V(\mC_n), s\, n^{-1/2}\, d_{\mC_n}) \convdislong \CRT,
\]
where $\CRT$ is the CRT and convergence is with respect to the Gromov-Hausdorff metric. In particular, this establishes that the CRT is the universal scaling limit for random graphs from subcritical clases, and it proves (in a strong form) a conjecture by Drmota and Noy~\cite{MR3184197}. The proof of Theorem~\ref{te:maintheorem}, see Section~\ref{sec:convergence}, gives a natural combinatorial characterization of the ``scaling'' constant $s$. Our methods are based on the algebraic concept of~$\cR$-enriched trees; more specifically, we use a size-biased enriched tree in order to study a coupling of $\mC_n$ with an appropriate conditioned critical Galton-Watson tree $\mT_n$ sharing the same vertex set. Our main step in the proof establishes that with high probability for any two vertices $u, v$ in $\mC_n$  the distance $d_{\mC_n}(u,v)$ is concentrated around~$d_{\mT_n}(u,v)$ multiplied by a constant factor $\sca \ge 1$ that depends \emph{only} on~$\cC$, and which is, very roughly speaking, the average length of a shortest path between two random distinct vertices in a random block of~$\mC_n$. Thus, the constant $s$ turns out to be the product of two quantities: the constant involved in the scaling limit of $\mT_n$, and the reciprocal of $\sca$. In Section~\ref{sec:examples} we exploit this characterization of~$s$ and compute its value for several important classes, including outerplanar graphs.

As a consequence of our main result we obtain the following statements, see Corollary~\ref{cor:hddistr}. The \emph{diameter} $\Di(G)$ of a graph $G$ is defined as $\max_{u,v \in V(G)} d_G(u,v)$, and the \emph{height} $\He(G^\bullet)$ of a pointed graph $G^\bullet=(G,r)$, which is $G$ rooted at a vertex $r$, is $\max_{v \in V(G)} d_G(r,v)$. Then the limit distribution for the diameter of~$\mC_n$ and the height of $\mC_n^\bullet$ satifsy for $x > 0$, as $n\to\infty$
\begin{align*}
\Pr{\Di(\mC_n) > {s^{-1}n^{1/2}x}} &\to \sum_{k=1}^\infty (k^2-1)\Big(\frac{2}{3}k^4x^4 -4k^2x^2 +2\Big)\exp(-k^2x^2/2), \\
\Pr{\He(\mC_n^\bullet) > {s^{-1}n^{1/2}x}} &\to 2 \sum_{k=1}^\infty (4k^2x^2-1)\exp(-2k^2x^2). 
\end{align*}
Apart from the convergence in distribution, we also show sharp tail bounds for the diameter and the height, see Theorem~\ref{te:tailbound}. In particular, we show that there are constants $C,c>0$ such that  \emph{for all} $n$ and $x \ge 0$
\[
\Pr{\Di(\mC_n) \ge x} \le C \exp(-cx^2/n) \quad \text{and} \quad \Pr{\He(\mC_n^\bullet) \ge x} \le C \exp(-cx^2/n).
\]
A similar result was shown for critical Galton-Watson random trees by Addario-Berry, Devroye and Janson \cite{MR3077536}, and our proof of these bounds builds on the methods in that paper. From this we deduce that all moments of the rescaled height and diameter converge as well. In particular, we obtain the universal and remarkable asymptotic behaviour  
\begin{align*}
\Ex{\Di(\mC_n)} \sim \frac{2^{3/2}}{3s} \sqrt{\pi n} \sim \frac{4}{3} \Ex{\He(\mC_n^\bullet)}.
\end{align*}
This improves the previously best known bounds $c_1 \sqrt{n} \le \Ex{\Di(\mC_n)} \le c_2 \sqrt{n \log{n}}$ given in~\cite{MR3184197}. The higher moments can also be determined and are depicted in Sections~\ref{sec:convergence} and~\ref{sec:heightcrt}.

In addition to the previous results, we demonstrate that our proof strategy is powerful enough to enable us to study the far more general setting of \emph{first passage percolation}: suppose that the edges of $\mC_n$ are equipped with independent random ``lengths'', drawn from a distribution that has exponential moments, and let the distance of two vertices $u,v$ be the minimum sum of those lengths along a path that contains both $u$ and $v$. Our last main results shows that again, up to an appropriate rescaling, the associated metric space converges to a multiple of the CRT; see Section~\ref{sec:extensions} for the details.

\subsection*{Outline}
Section~\ref{sec:crt} fixes some basic notation and summarizes several results related to Galton-Watson random trees and the Continuum Random Tree (CRT). In particular, Subsection~\ref{sec:heightcrt} states the distribution and expressions for arbitrarily high moments of the height and diameter of the CRT -- to our knowledge these results are scattered accross several papers and we provide a concise presentation. Section~\ref{prelim} is devoted to the definition of combinatorial species, $\cR$-enriched trees and subcritical graph classes. In this part of the paper we collect some general and relevant properties of the these objects -- many of them were already known in special cases, and we put them in a broader context. In Section~\ref{sec:pointedenr} we describe a construction of a novel powerful object called the \emph{size-biased random enriched tree} that will allow us to study systematically the distribution of distances in random graphs from subcritical graph classes. Subsequently, in Section~\ref{sec:convergence} we show our main result: the convergence of the rescaled random graphs towards a multiple of the CRT. Section~\ref{sec:tailbounds} complements this result by proving subgaussian tail-bounds for the height and diameter. Section~\ref{sec:extensions} is devoted to several extensions of our results, in particular first passage percolation. The paper closes with many examples, including the prominent class of outerplanar graphs.

\section{Galton-Watson Trees and the CRT}
\label{sec:crt}

We briefly summarize required notions and results related to the Brownian Continuum Random Tree (CRT) and refer the reader to Aldous \cite{MR1207226} and Le Gall \cite{MR2203728} for a thorough treatment.

\subsection{Graphs and (Plane) Trees}
\label{prelim1}
All graphs considered in this paper are undirected and may not contain multiple edges or loops. That is, a graph $G$ consists of a non-empty set $V(G)$ of vertices and a set $E(G)$ of edges that are two-element subsets of $V(G)$. If $|V(G)| \in \mathbb{N}$ we say that $|G| := |V(G)|$ is the {\em size} of $G$. Following Diestel \cite{MR2744811}, we recall and fix basic definitions and notations. Two vertices $v, w \in V(G)$ are said to be {\em adjacent} if $\{v,w\} \in E(G)$. We will often write $vw$ instead of $\{v,w\}$. A \emph{path} $P$ is a graph such that
\[
V(P)=\{v_0, \ldots, v_\ell\}, \qquad E(P)=\{ v_0v_1, \ldots, v_{\ell-1}v_\ell\}
\]
with the $v_i$ being distinct. The number $\ell = \ell(P)$ of edges of a path is its {\em length}. We say $P$ {\em connects} its endvertices $v_0$ and $v_\ell$ and we often write $P=v_0v_1 \ldots v_\ell$. If $P$ has length at least two we call the graph $C_\ell = P + v_0 v_\ell$ obtained by adding the edge $v_0v_\ell$ a \emph{cycle}. The \emph{complete graph} with $n$ vertices in which each pair of distinct vertices is adjacent is denoted by $K_n$.

We say the graph $G$ is \emph{connected} if any two vertices $u, v \in V(G)$ are connected by a path in $G$. The length of a shortest path connecting the vertices $u$ and $v$ is called the {\em distance} of $u$ and $v$ and it is denoted by $d_G(u,v)$. Clearly $d_G$ is a metric on the vertex set $V(G)$. A graph~$G$ together with a distinguished vertex $v \in V(G)$ is called a {\em rooted} graph with root-vertex $v$. The {\em height} $h(w)$ of a vertex $w \in V(G)$ is its distance from the root. The \emph{height} $H(G)$ is the maximum height of the vertices in $G$. A {\em tree} $T$ is a non-empty connected graph without cycles. Any two vertices of a tree are connected by a unique path. If $T$ is rooted, then the vertices $w' \in V(T)$ that are adjacent to a vertex $w$ and have height $h(w) + 1$ form the {\em offspring} set of the vertex $w$. Its cardinality is the {\em outdegree} $d^+(w)$ of  $w$.

The {\em Ulam-Harris tree} is an infinite rooted tree with vertex set $\cup_{n \ge 0} \ndN^n$ consisting of finite sequences of natural numbers. The empty string $\emptyset$ is the root and the offspring of any vertex $v$ is given by the concatenations $v1, v2, v3, \ldots$. In particular, the labelling of the vertices induces a linear order on each offspring set. A {\em plane tree} is defined as a subtree of the Ulam-Harris tree that contains the root.

\subsection{Galton-Watson Trees}
\label{sec:gwt}
Throughout this section we fix an integer-valued random variable $\xi \ge 0$. By abuse of language we will often not distinguish between $\xi$ and its distribution. A \emph{$\xi$-Galton-Watson} tree $\mT$ is the family tree of a Galton-Watson branching process with offspring distribution $\xi$, interpreted as a (possibly infinite) plane tree. If $\Pr{\xi = 1} < 1$, then $\mT$ is almost surely finite if and only if $\Ex{\xi} \le 1$. If $\Ex{\xi} = 1$ we call $\mT$ \emph{critical}. Let $\mathsf{supp}(\xi) = \{k \mid \Pr{\xi = k} > 0\}$ denote the {\em support} of $\xi$ and define the {\em span} $d = \spa(\xi)$ as the greatest common divisor of $\{k - \ell \mid k, \ell \in \mathsf{supp}(\xi)\}$.  If $\mT$ is finite, then 
\[
 |\mT| = 1 + \sum_{v \in V(\mT)} d^+_{\mT}(v) \equiv 1 \mod d.
\]
Conversely, if $\xi$ is not almost surely positive, then $\Pr{|\mT| = n} > 0$ for all large enough $n\in\mathbb{N}$ satisfying $n \equiv 1 \mod d$. We will need the following standard result for the probability that a critical Galton-Watson tree has size $n$.
\begin{lemma}[{\cite[p.\ 105]{MR865130}}]
\label{le:cycap}
\label{le:gwtsize}
Suppose that the distribution $\xi$ has expected value one and finite nonzero variance $0 < \sigma^2 < \infty$. Let $(\xi_i)_{i \in \ndN}$ be an infinite family of i.i.d.\ copies of~$\xi$. Then the probability that the $\xi$-Galton-Watson tree $\mT$ has size $n \equiv 1 \mod d$ with $d = \spa(\xi)$ satisfies
\[
\Pr{|\mT| = n} = n^{-1} \Pr{ \sum_{i=1}^n \xi_i = n-1} \sim{} \frac{d}{\sqrt{2 \pi \sigma^2}} n^{-3/2} \quad \text{as $n \to \infty$}.
\]
\end{lemma}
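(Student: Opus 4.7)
The plan is to split the claim into its two parts: an exact identity, and an asymptotic estimate.

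For the exact identity $\Pr{|\mT|=n}= n^{-1}\Pr{\sum_{i=1}^n\xi_i=n-1}$, I would use the cycle lemma applied to the Łukasiewicz encoding of plane trees. Reading the outdegrees of a plane tree $T$ on $n$ vertices in depth-first order produces a sequence $(d_1(T),\dots,d_n(T))$ of nonnegative integers summing to $n-1$ whose strict prefix sums satisfy the ``admissibility'' condition $d_1+\dots+d_k \ge k$ for all $k<n$; conversely every admissible sequence arises from a unique plane tree. Under the $\xi$-Galton–Watson measure, the tree $T$ has probability $\prod_{i=1}^n \Pr{\xi=d_i(T)}$, and this weight is invariant under cyclic rotations of the sequence. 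The cycle lemma asserts that among the $n$ cyclic rotations of any nonnegative integer sequence of length $n$ summing to $n-1$, exactly one is admissible. Summing over all such sequences yields
\[
\Pr{|\mT|=n}\;=\!\!\sum_{\text{adm.}}\prod_{i=1}^n\Pr{\xi=d_i}\;=\;\frac{1}{n}\sum_{\substack{d_1,\dots,d_n\ge 0\\ d_1+\dots+d_n=n-1}}\prod_{i=1}^n\Pr{\xi=d_i}\;=\;\frac{1}{n}\Pr{\textstyle\sum_{i=1}^n\xi_i=n-1}.
\]

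For the asymptotic, I would invoke Gnedenko's local central limit theorem applied to the centered i.i.d.\ sum $S_n-n=\sum_{i=1}^n(\xi_i-1)$. The summands have mean zero, finite variance $\sigma^2$, and take values on a shifted sublattice of $\mathbb{Z}$ whose span is exactly $d=\spa(\xi)$. Gnedenko's theorem then gives, uniformly over integers $k$ with $k\equiv 0\pmod d$,
\[
\sqrt{n}\,\Pr{S_n=n+k}\;=\;\frac{d}{\sigma\sqrt{2\pi}}\exp\!\left(-\frac{k^2}{2n\sigma^2}\right)+o(1).
\]
Under the hypothesis $n\equiv 1\pmod d$, the value $k=-1$ is attainable, so specializing to $k=-1$ yields $\Pr{S_n=n-1}\sim d/\sqrt{2\pi n\sigma^2}$. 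Combining with the identity from the first step and dividing by $n$ produces the claimed $n^{-3/2}$-asymptotic with constant $d/\sqrt{2\pi\sigma^2}$.

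The main obstacle, such as it is, is purely bookkeeping around the lattice span: one must verify that the paper's definition of $\spa(\xi)$ as the gcd of pairwise differences of $\mathsf{supp}(\xi)$ is indeed the span of the centered variable $\xi-1$ that appears in Gnedenko's theorem, and that the congruence condition $n\equiv 1\pmod d$ precisely makes $n-1$ lie in the support of $S_n$ so that the LCLT statement is not vacuous. Both checks are immediate from the definitions. Everything else is a direct quotation of well-known results.
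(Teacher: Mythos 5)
Your proof is correct and follows the standard argument for this classical result (the Otter--Dwass identity obtained from the cycle lemma applied to the \L{}ukasiewicz encoding, followed by Gnedenko's local central limit theorem); the paper gives no proof of its own and simply cites the literature, where precisely this argument appears. One small slip: the side condition in your displayed LCLT should be $n+k\equiv 0\pmod d$ (equivalently $k\equiv -n\pmod d$) rather than $k\equiv 0\pmod d$, but your closing remark that $n\equiv 1\pmod d$ is exactly what makes $k=-1$ admissible shows you have the correct condition in mind.
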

We also state some results given in~\cite{MR3077536,MR2908619} that will be useful in our arguments. As in the previous lemma, suppose that $\xi$ satisfies $\Ex{\xi}=1$ and has finite nonzero variance. Let $n \equiv 1 \mod d$ be sufficiently large such that $\Pr{\mT = n} > 0$ and let $\mT_n$ denote the Galton-Watson tree conditioned on having size $n$. By \cite[Thm.\ 1.2]{MR3077536} and \cite[p.\ 6]{MR3077536} there are constants $C_1,c_1, C_2, c_2 > 0$ such that the height $\He(\mT_n)$ satisfies the following tail bounds for all $n$ and $h \ge 0$:
\begin{align}
\label{eq:gwttail}
\Pr{\He(\mT_n) \le h} \le C_1 \exp(-c_1 (n-2)/h^2),
\qquad
\Pr{\He(\mT_n) \ge h} \le C_2 \exp(-c_2 h^2/ n).
\end{align}

\subsection{Convergence of Rescaled Conditioned Galton-Watson trees}
\label{sec:convcrt}

\emph{Throughout this and the following subsections  in Section~\ref{sec:crt} we write $\mT$ for a critical $\xi$-Galton-Watson tree having finite nonzero variance $\sigma^2$. Moreover, $n$ will always denote an integer satisfying $n \equiv 1 \mod\spa(\xi)$ and is assumed to be large enough such that the conditioned tree $\mT_n$ having exactly $n$ vertices is well-defined.
}

Given a plane tree~$T$ of size $n$ we  consider its canonical \emph{depth-first search} walk $(v_i)_{0 \le i \le 2(n-1)}$ that starts at the root and always traverses the leftmost unused edge first. That is, $v_0$ is the root of $T$ and given $v_0, \ldots, v_i$ walk if possible to the leftmost unvisited son of $v_i$. If $v_i$ has no sons or all sons have already been visited, then try to walk to the parent of~$v_i$. If this is not possible either, being only the case when~$v_i$ is the root of $T$ and all other vertices have already been visited, then terminate the walk. The corresponding heights $c(i) := d(\text{root of } T, v_i)$ define the search-depth function $c$ of the tree $T$. The \emph{contour function} $C: [0,2(n-1)] \to \ndR_+$ is defined by $C(i) = c(i)$ for all integers $0 \le i \le 2(n-1)$ with linear interpolation between these values, see Figure~\ref{fi:contour} for an example.
\begin{figure}[ht]
	\centering
	\begin{minipage}{0.8\textwidth}
  		\centering
  		\includegraphics[width=0.8\textwidth, height=2cm]{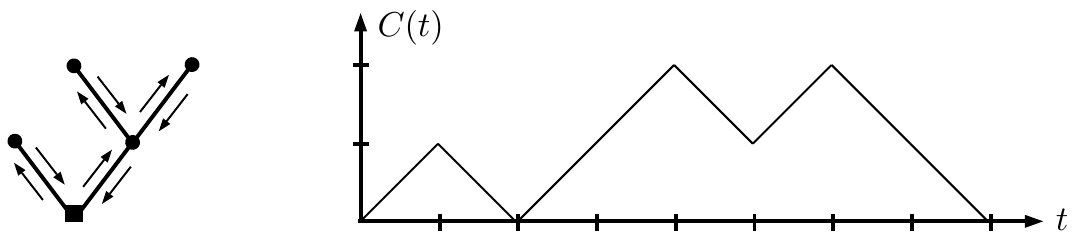}
  		\caption{The contour function of a plane tree.}
  		\label{fi:contour}
	\end{minipage}
\end{figure}
It can be shown that after a suitable rescaling the contour process of $\mT_n$ converges to a normalized Brownian excursion.
\begin{theorem}[{\cite[Thm 6.1]{MR2603061}}]
\label{theo:gwtconv}
Let $\mT_n$ be a critical $\xi$-Galton-Watson tree conditional on having $n$ vertices, where $\xi$ has finite non-zero variance $\sigma^2$. Let $C_n$ denote the contour function of $\mT_n$. Then
\[
\left (\frac{\sigma}{2 \sqrt{n}} C_n\big(t2(n-1)\big) \right)_{0 \le t \le 1} \convdislong ~~\me
\] in $\mathcal{C}([0,1], \ndR_+)$, where $\me = (\me_t)_{0 \le t \le 1}$ denotes the Brownian excursion of duration one.
\end{theorem}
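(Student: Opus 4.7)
The plan is to encode the tree $\mT_n$ by the Łukasiewicz walk and deduce the theorem from classical invariance principles for random walks. Label the vertices of a plane tree $T$ of size $n$ as $v_0, v_1, \ldots, v_{n-1}$ in the depth-first order, and set $W_0 = 0$, $W_{k+1} - W_k = d^+(v_k) - 1$ for $0 \le k < n$. A classical fact is that under the $\xi$-Galton–Watson measure the walk $(W_k)_{0 \le k \le \zeta}$, where $\zeta := \inf\{k \ge 1 : W_k = -1\}$, has the same distribution as a centered random walk $(S_k)$ with i.i.d.\ increments distributed as $\xi - 1$, stopped at its hitting time of $-1$; moreover conditioning on $|\mT| = n$ is exactly conditioning on $\zeta = n$.

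First I would establish the scaling limit of the Łukasiewicz walk itself, namely
\[
\left(\frac{W_{\lfloor nt\rfloor}}{\sqrt{n}}\right)_{0 \le t \le 1} \convdislong \sigma \,\me
\]
in $\mathcal{C}([0,1],\ndR)$. The two ingredients are (i) Kemperman's cycle lemma, which identifies the law of $(W_0,\ldots,W_n)$ conditioned on $\{\zeta=n\}$ with the cyclic shift of the unconditioned bridge $(S_0,\ldots,S_n \mid S_n = -1)$ that places its running minimum at the origin, and (ii) Donsker's invariance principle for bridges combined with Vervaat's theorem, which says that the cyclic shift of a Brownian bridge at its infimum has the law of the standard Brownian excursion. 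Since the steps of $S_k$ have mean $0$ and variance $\sigma^2$, these two facts directly yield the above display.

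Next I would transfer this to the contour function via the height function $H(k) := d_{\mT_n}(\text{root},v_k)$. A deterministic combinatorial identity expresses the height as $H(k) = \#\{0 \le j < k : W_j = \min_{j \le i \le k} W_i\}$, and a law-of-large-numbers type argument on the reversed walk (Duquesne–Le Gall) shows that $n^{-1/2} H(\lfloor nt\rfloor)$ converges jointly with the Łukasiewicz walk to $\frac{2}{\sigma}\me_t$ (the extra factor $2/\sigma^2$ relative to $W$ comes from this ergodic step). Finally, the contour process is obtained from the height function by a linear zig-zag interpolation since depth-first search traverses every edge twice, and one has the uniform bound
\[
\sup_{0 \le t \le 1}\left| C_n\bigl(t\cdot 2(n-1)\bigr) - H\bigl(\lfloor nt\rfloor\bigr)\right| \le \max_{v \in \mT_n} d^+(v),
\]
while a standard estimate shows that the maximum outdegree in $\mT_n$ is $o_{\mathbb{P}}(\sqrt{n})$. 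Combining these three pieces gives $\frac{\sigma}{2\sqrt{n}}C_n(t\cdot 2(n-1)) \convdislong \me_t$ uniformly in $t$. The main obstacle is the middle step: the identity for $H$ in terms of $W$ is nonlinear, and proving that the rescaled height functional actually converges to $\frac{2}{\sigma}\me$ requires a delicate time-reversal and ergodic argument (or, alternatively, an independent tightness argument together with identification of finite-dimensional limits via Kennedy's formula). Once this is in hand the passage from height to contour and hence to the stated theorem is essentially bookkeeping.
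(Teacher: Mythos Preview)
The paper does not prove this theorem: it is quoted verbatim from \cite{MR2603061} (Le~Gall's survey) and attributed to Aldous, with pointers to Duquesne and Kortchemski for extensions. There is thus no in-paper argument to compare against.

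Your sketch is precisely the standard route taken in those references (\L{}ukasiewicz walk $\to$ Vervaat/cycle-lemma $\to$ height process $\to$ contour process), so in that sense you are reproducing the proof the paper defers to. One technical point: the displayed bound
\[
\sup_{0 \le t \le 1}\bigl| C_n\bigl(2(n-1)t\bigr) - H\bigl(\lfloor nt\rfloor\bigr)\bigr| \le \max_{v \in \mT_n} d^+(v)
\]
is not correct as stated. The relation between contour time and depth-first index is $\phi(k)=2k-H(k)$, so the two processes differ through a time change whose error is of order $\sup_k H(k)$, not $\max_v d^+(v)$; the actual bound (see e.g.\ Le~Gall's survey) involves the modulus of continuity of $H$ over windows of length $O(\sup_k H(k)/n)$ plus an $O(1)$ term. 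This still goes to zero after rescaling (using tightness of $n^{-1/2}H$), so your conclusion survives, but the inequality you wrote should be replaced by the correct time-change estimate.
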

This result is due to Aldous {\cite[Thm.\ 23]{MR1207226}}, who stated it for aperiodic offspring distributions. See also Duquesne \cite{MR1964956}
and Kortchemski \cite{MR3185928} for further extensions. Theorem~\ref{theo:gwtconv} can be formulated as a convergence of random trees with respect to the Gromov-Hausdorff metric. We introduce the required notions following Le Gall and Miermont \cite{MR3025391}.

A continuous function $g: [0,1] \to [0, \infty)$ with $g(0) = g(1) = 0$ induces a pseudo-metric on the interval $[0,1]$ by
\[
d_g(u,v) = g(u) + g(v) - 2 \inf_{u \le s \le v} g(s)
\]
for $u \le v$. This defines a metric on the quotient $T_g = [0,1]{/\hspace{-1.1mm}\sim}$, where $u \sim v$ if and only if $d(u,v) = 0$; we denote the corresponding metric space by $(T_g,d_g)$ and call the equivalence class $r_0(T_g)$ of the origin its \emph{root}.

Given a metric space $(E, d)$ the set of compact subsets is again a metric space with respect to the \emph{Hausdorff metric}
\[
\delta_{\text{H}}(K,K') = \inf\{\epsilon > 0 \mid K \subset U_{\epsilon}(K'), K' \subset U_{\epsilon}(K)\},
\]
where $U_{\epsilon}(K) = \{x \in E \mid d(x,K) \le \epsilon\}$. The sets $\ndK~(\ndK^\bullet)$ of isometry classes of (pointed) compact metric spaces, where a pointed  space is a triple $(E,d,r)$ where $(E,d)$ is a metric space and $r\in E$ is a distingished element, are Polish spaces with respect to the \emph{(pointed) Gromov-Hausdorff metric}
\[
\begin{split}
d_{\text{GH}}((E_1,d_1),(E_2,d_2)) & = \inf_{\varphi_1, \varphi_2} \left\{ \epsilon > 0 \mid \delta_{\text{H}}(\varphi_1(E_1), \varphi_2(E_2)) \right\}, \\
d_{\text{GH}}((E_1,d_1, r_1),(E_2,d_2,r_2)) & = \inf_{\varphi_1, \varphi_2} \left\{ \epsilon > 0 \mid \max\big\{\delta_{\text{H}}(\varphi_1(E_1), \varphi_2(E_2)), d_E(\varphi_1(r_1), \varphi_2(r_2)\right\}\big\},
\end{split}
\]
where the infimum is in both cases taken over all isometric embeddings $\varphi_1: E_1 \to E$ and $\varphi_2: E_2 \to E$ into a common metric space $(E, d_E)$ \cite[Thm.\ 3.5]{MR3025391}. We will make use of the following characterisation of the Gromov-Hausdorff metric. Given two compact metric spaces $(E_1, d_1)$ and $(E_2, d_2)$ a \emph{correspondence} between them is a subset $R \subset E_1 \times E_2$ such that any point $x \in E_1$ corresponds to at least one point $y \in E_2$ and vice versa. The \emph{distortion} of $R$ is 
\[
\disto(R) = \sup\{|d_1(x_1,y_1) - d_2(x_2,y_2)| \mid (x_1, y_1),(x_2,y_2) \in R \}.
\]
\begin{proposition}[{\cite[Prop.\ 3.6]{MR3025391}}]
\label{pro:distortion}
Given two pointed compact metric spaces $(E_1,d_1,r_1)$ and $(E_2,d_2,r_2)$ we have that
\[
	2d_{GH}((E_1,d_1, r_1),(E_2,d_2,r_2)) = \inf_R \disto(R),
\]
where $R$ ranges over all correspondences between $E_1$ and $E_2$ such that $r_1,r_2$ correspond to each other.
\end{proposition}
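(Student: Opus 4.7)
The plan is to show the two inequalities $\inf_R \disto(R) \le 2 d_{GH}$ and $\inf_R \disto(R) \ge 2 d_{GH}$ separately by the standard construction of a correspondence from an isometric embedding, respectively a pseudo-metric on the disjoint union from a correspondence.

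For the first inequality, suppose that $\epsilon > d_{GH}((E_1,d_1,r_1),(E_2,d_2,r_2))$. Then there exist a metric space $(E, d_E)$ and isometric embeddings $\varphi_i: E_i \to E$ with $\delta_H(\varphi_1(E_1), \varphi_2(E_2)) < \epsilon$ and $d_E(\varphi_1(r_1), \varphi_2(r_2)) < \epsilon$. I would define
\[
R = \bigl\{(x_1, x_2) \in E_1 \times E_2 \mid d_E(\varphi_1(x_1), \varphi_2(x_2)) < \epsilon \bigr\} \cup \{(r_1, r_2)\}.
\]
The Hausdorff condition guarantees that every $x_1 \in E_1$ is paired with some $x_2 \in E_2$ and vice versa, so $R$ is a correspondence, and by construction the roots correspond. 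For any pairs $(x_1,x_2), (y_1,y_2) \in R$ the triangle inequality in $E$ yields
\[
|d_1(x_1,y_1) - d_2(x_2,y_2)| \le d_E(\varphi_1(x_1), \varphi_2(x_2)) + d_E(\varphi_1(y_1), \varphi_2(y_2)) \le 2\epsilon,
\]
so $\disto(R) \le 2\epsilon$, and taking $\epsilon \downarrow d_{GH}$ gives the desired bound.

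For the converse, let $R$ be a correspondence containing $(r_1,r_2)$ with $\disto(R) = \eta$. I would define a pseudo-metric $d$ on the disjoint union $E_1 \sqcup E_2$ by setting $d = d_i$ on $E_i \times E_i$ and, for $x_1 \in E_1$, $x_2 \in E_2$,
\[
d(x_1, x_2) = \inf \bigl\{ d_1(x_1, y_1) + \tfrac{\eta}{2} + d_2(y_2, x_2) \mid (y_1, y_2) \in R \bigr\}.
\]
The key computation is the triangle inequality in the mixed case, where for $x_1,z_1 \in E_1$ and $x_2 \in E_2$ one uses that $|d_1(x_1,y_1) - d_2(x_2,y_2)| \le \eta$ for every $(y_1,y_2) \in R$ to bound $d(x_1, x_2) \le d_1(x_1, z_1) + d(z_1, x_2)$, and symmetrically for two points of $E_2$ and one of $E_1$. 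Passing to the metric quotient and composing with the canonical injections yields isometric embeddings into a common space in which, for any $(y_1,y_2) \in R$, $d(y_1,y_2) = \eta/2$, so $\delta_H(E_1, E_2) \le \eta/2$, and in particular $d(r_1, r_2) \le \eta/2$ since $(r_1, r_2) \in R$. Hence $d_{GH} \le \eta/2 = \disto(R)/2$, and taking the infimum over $R$ closes the argument.

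The main technical obstacle is the verification of the triangle inequality for the constructed pseudo-metric; checking the mixed cases requires carefully using the distortion bound for two arbitrary pairs from $R$, and this is where correspondences are used to their full strength. Everything else (that $R$ is a correspondence, that the roots are paired, that the Hausdorff distance is controlled by the ``gap'' $\eta/2$) is routine once the pseudo-metric is in place.
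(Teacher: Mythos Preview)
Your argument is the standard one and is correct. The paper itself does not give a proof of this proposition; it simply quotes the result from \cite[Prop.~3.6]{MR3025391}, so there is no paper proof to compare against. Two minor remarks: in the first half the union with $\{(r_1,r_2)\}$ is redundant since the pointed condition already gives $d_E(\varphi_1(r_1),\varphi_2(r_2))<\epsilon$; and in the second half the case that genuinely requires the distortion bound is the inequality $d_1(x_1,z_1)\le d(x_1,x_2)+d(x_2,z_1)$ for $x_1,z_1\in E_1$, $x_2\in E_2$, which needs two pairs from $R$ --- you allude to this, but it is worth writing out explicitly, as the other mixed inequality $d(x_1,x_2)\le d_1(x_1,z_1)+d(z_1,x_2)$ follows from the ordinary triangle inequality in $E_1$ alone.
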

\noindent An important consequence is that the mapping 
\[
(\{g \in \cC([0,1], \ndR_+) \mid g(0)=g(1)=0 \}, \lVert \cdot \lVert_{\infty}) \to (\ndK, d_{GH}), \qquad g \mapsto T_g
\] is Lipschitz-continous \cite[Cor.\ 3.7]{MR3025391}.
\begin{definition}
The random metric space $(\CRT, d_\me, r_0(\CRT))$ coded by the Brownian excursion of duration one $\me$ is called the Brownian continuum random tree (CRT).
\end{definition}
\noindent
We may view $\mT_n$ as a random pointed metric space $(V(\mT_n), d_{\mT_n}, \emptyset) \in \ndK$. This space is close to the real tree encoded by its contour function, hence Theorem \ref{theo:gwtconv} implies convergence with respect to the Gromov-Hausdorff metric, see \cite[p.\ 740]{MR2603061}.
\begin{theorem}
\label{te:gwtconv}
Let $\mT_n$ be a critical $\xi$-Galton-Watson tree conditional on having $n$ vertices, where $\xi$ has finite non-zero variance $\sigma^2$. As $n$ tends to infinity,  $\mT_n$ with edges rescaled to length $\frac{\sigma}{2 \sqrt{n}}$ converges in distribution to the CRT, that is
\begin{equation}
\label{eq:convFullDetail}
	\left(V(\mT_n), \frac{\sigma}{2 \sqrt{n}} d_{\mT_n}, \emptyset \right) \convdislong (\CRT, d_\me, r_0(\CRT))
\end{equation}
in the metric space $(\ndK^\bullet, d_{\text{GH}})$.
\end{theorem}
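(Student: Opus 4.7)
The plan is to deduce Theorem \ref{te:gwtconv} from Theorem \ref{theo:gwtconv} (convergence of contour functions) combined with Proposition \ref{pro:distortion} (distortion characterisation of the Gromov--Hausdorff metric). The strategy splits into two steps: (a) show that the discrete rescaled tree $(V(\mT_n), \tfrac{\sigma}{2\sqrt{n}} d_{\mT_n}, \emptyset)$ is within $o_\mathbb{P}(1)$ Gromov--Hausdorff distance of the real tree coded by the (rescaled) contour function; and (b) show that the latter converges in distribution to $(\CRT, d_\me, r_0(\CRT))$. The theorem then follows by the triangle inequality via a Slutsky-type argument.

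For step (a), let $(v_0, v_1, \ldots, v_{2(n-1)})$ denote the vertices visited by the depth-first walk on $\mT_n$ and let $C_n$ be its contour function. I would build the natural correspondence
\[
R_n = \{(v_i, [i]) : 0 \le i \le 2(n-1)\} \subset V(\mT_n) \times T_{C_n},
\]
where $[i]$ is the equivalence class of the integer time $i$ in $T_{C_n}$. By construction the root $\emptyset$ of $\mT_n$ is paired with $r_0(T_{C_n})$. A standard property of the depth-first walk is that $d_{\mT_n}(v_i, v_j) = d_{C_n}(i, j)$ for all integer pairs $(i,j)$ (since $\min_{s\in[i,j]} C_n(s)$ coincides with the height of the least common ancestor of $v_i$ and $v_j$). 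The only remaining source of distortion comes from the linear interpolation of $C_n$ between integer points, which can inflate pseudo-distances by at most an additive constant bounded by $2$ on the unrescaled scale. After multiplying tree distances by $\sigma/(2\sqrt{n})$, Proposition \ref{pro:distortion} therefore yields
\[
d_{\text{GH}}\!\left(\big(V(\mT_n), \tfrac{\sigma}{2\sqrt{n}} d_{\mT_n}, \emptyset\big),\, \big(T_{\tilde C_n}, d_{\tilde C_n}, r_0(T_{\tilde C_n})\big)\right) = O(1/\sqrt{n}),
\]
where $\tilde C_n(t) := \tfrac{\sigma}{2\sqrt{n}} C_n(2(n-1) t)$ for $t \in [0,1]$.

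For step (b), Theorem \ref{theo:gwtconv} asserts that $\tilde C_n \convdislong \me$ in $\cC([0,1], \ndR_+)$ equipped with the uniform topology. The Lipschitz continuity of the coding map $g \mapsto (T_g, d_g, r_0(T_g))$ into $(\ndK^\bullet, d_{\text{GH}})$, recalled after Proposition \ref{pro:distortion}, combined with the continuous mapping theorem, then gives
\[
\big(T_{\tilde C_n}, d_{\tilde C_n}, r_0(T_{\tilde C_n})\big) \convdislong (\CRT, d_\me, r_0(\CRT))
\]
in $(\ndK^\bullet, d_{\text{GH}})$. Combining steps (a) and (b) yields \eqref{eq:convFullDetail}.

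No serious obstacle is expected: the ingredients are essentially packaged in the cited references. The only point that requires care is the verification in (a) that the distortion of $R_n$ is $O(1)$ on the unrescaled scale, which reduces to the two well-known facts that the contour walk realises the tree metric exactly on integer times, and that linear interpolation on the integer grid inflates the intrinsic pseudo-metric only by an additive constant.
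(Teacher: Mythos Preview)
Your proposal is correct and follows exactly the route the paper sketches: the paper simply remarks that the discrete tree is close to the real tree coded by its contour function and that Theorem~\ref{theo:gwtconv} together with the Lipschitz continuity of $g \mapsto T_g$ (noted after Proposition~\ref{pro:distortion}) yields the Gromov--Hausdorff convergence, and you have written out precisely these two steps. One small technicality: as defined, your $R_n$ contains only integer-time equivalence classes and is therefore not a correspondence covering all of $T_{C_n}$; extend it to $\{(v_{\lfloor t \rfloor}, [t]) : 0 \le t \le 2(n-1)\}$ (or equivalently pair each $[t]$ with the nearer of $v_{\lfloor t\rfloor}, v_{\lceil t\rceil}$), after which your distortion bound $O(1)$ on the unrescaled scale goes through exactly as you describe.
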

This invariance principle is due to Aldous \cite{MR1207226} and there exist various extensions. See for example Duquesne \cite{MR1964956}, Duquesne and Le Gall \cite{MR2147221}, Haas and Miermont \cite{MR3050512}. Adopting common terminology, we will often write in the sequel
\[
	\frac{\sigma}{2 \sqrt{n}} \mT_n \convdislong \CRT
\]
instead of~\eqref{eq:convFullDetail}.

\subsection{Height and Diameter of the CRT}
\label{sec:heightcrt}

Clearly the height $\He(\mT_n)$ and Diameter $\Di(\mT_n)$ of the tree $\mT_n$ may be recovered from its contour function $C$. By the results from Section~\ref{sec:convcrt} it follows that
\begin{align}
\label {eq:distheightconv}
\frac{\sigma}{2 \sqrt{n}}\He(\mT_n) &\convdislong  \He(\CRT) \eqdist \sup_{0 \le t \le 1}\me(t) \\
\label {eq:distdiamconv}
\frac{\sigma}{2 \sqrt{n}}\Di(\mT_n) &\convdislong  \Di(\CRT) \eqdist \sup_{0 \le t_1 \le t_2 \le 1}(\me(t_1) + \me(t_2) - 2 \inf_{t_1 \le t \le t_2} \me(t)).
\end{align}
Since $\Di(\mT_n) \le 2 \He(\mT_n)$ the tail bound (\ref{eq:gwttail}) implies that all moments in (\ref{eq:distheightconv}) and (\ref{eq:distdiamconv}) converge.
It is well-known that $\He(\CRT)/\sqrt{2}$ follows a \emph{Theta distribution}, that is 
\begin{align}
\label{eq:disheight}
\Pr{\He(\CRT) > x} =  2 \sum_{k=1}^\infty (4k^2x^2-1)\exp(-2k^2x^2).
\end{align}
for all $x>0$. The $k$th moment of the height is given by
\begin{align}
\label{eq:exheight}
\Ex{\He(\CRT)} = \sqrt{\pi / 2} \quad \text{and} \quad \Ex{\He(\CRT)^k} = 2^{-k/2} k(k-1) \Gamma(k/2) \zeta(k) \quad \text{for $k \ge 2$}.
\end{align}
This follows from standard results on the Brownian excursion, see e.g.\ Chung \cite{MR0467948} and Biane, Pitman and Yor \cite{MR1848256}, or by calculating directly the limit distribution of extremal parameters of a class of trees that converges to the CRT (see e.g.\ R\'enyi and Szekeres \cite{MR0219440}). The distribution of the diameter is given by
\begin{align}
\label{eq:disdiam}
\Pr{\Di(\CRT) > x} =
\sum_{k=1}^\infty (k^2-1)\Big(\frac{2}{3}k^4x^4 -4k^2x^2 +2\Big)\exp(-k^2x^2/2).
\end{align}
This expression may be obtained (by tedious calculations) from results of Szekeres \cite{MR731595}, who proved the existence of a limit distribution for the diameter of rescaled random unordered labelled trees. The moments of this distribution were calculated for example in Broutin and Flajolet \cite{MR2956055} and are given by
\begin{align}
\label{eq:exdiam}
\Ex{\Di(\CRT)} = \frac{4}{3}\sqrt{\pi/2}, \quad \Ex{\Di(\CRT)^2} = \frac{2}{3}\left(1 + \frac{\pi^2}{3}\right), \quad \Ex{\Di(\CRT)^3} = 2 \sqrt{2\pi}, \\
\label{eq:exdiam2}
\Ex{\Di(\CRT)^k} = \frac{2^{k/2}}{3} k(k-1)(k-3) \Gamma(k/2)(\zeta(k-2) - \zeta(k)) \quad \text{for $k \ge 4$}.
\end{align}

\section{Combinatorial Species, $\cR$-enriched Trees and Subcritical Graph Classes}
\label{prelim}
We recall parts of the theory of combinatorial species and Boltzmann samplers to the extend required in this paper. A reader who is already familiar with the framework of subcritical graph classes may  skip some parts of this section. However, we stress the importance of the representation of connected graphs as enriched trees in Subsection~\ref{sec:enriched} and the coupling of random graphs with a Galton-Watson tree in Subsection \ref{sec:subcri}. Moreover, several intermediate lemmas that we state were already shown in previous papers, albeit under stronger assumptions.

\subsection{Combinatorial Species}
\label{prelim2}
The framework of combinatorial species allows for a unified treatment of a wide range of combinatorial objects. We give only a concise introduction and refer to Joyal \cite{MR633783} and Bergeron, Labelle and Leroux \cite{MR1629341} for a detailed discussion. The essentially equivalent language of {\em combinatorial classes} was developed by Flajolet and Sedgewick~\cite{MR2483235}.

A {\em combinatorial species} may be defined as a functor $\cF$ that maps any finite set $U$ of {\em labels} to a finite set $\cF[U]$ of {\em $\cF$-objects} and any bijection $\sigma: U \to V$ of finite sets to its (bijective) {\em transport function} $\cF[\sigma]:\cF[U]\to\cF[V]$ {\em along} $\sigma$, such that composition of maps and the identity are preserved. We say that a species $\cG$ is a {\em subspecies} of $\cF$ and write $\cG \subset \cF$ if $\cG[U] \subset \cF[U]$ for all finite sets $U$ and $\cG[\sigma] = \cF[\sigma]|_{U}$ for all bijections $\sigma: U \to V$.
Given two species $\cF$ and $\cG$, an {\em isomorphism} $\alpha: \cF \, \isom\,  \cG$ from $\cF$ to $\cG$ is a family of bijections $\alpha = (\alpha_U: \cF[U] \to \cG[U])_{U}$ where $U$ ranges over all finite sets, such that $\cG[\sigma] \alpha_U = \alpha_V \cF[\sigma]$ for all bijective maps $\sigma: U \to V$. The species $\cF$ and $\cG$ are {\em isomorphic} if there exists and isomorphism from one to the other. This is denoted by $\cF \simeq \cG$ or, by abuse of notation, just $\cF = \cG$. An element $\gamma_U \in \cF[U]$ has size $|\gamma_U| := |U|$ and two $\cF$-objects $\gamma_U$ and $\gamma_V$ are termed isomorphic if there is a bijection $\sigma: U \to V$ such that $\cF[\sigma](\gamma_U) = \gamma_V$. We will often just write $\sigma.\gamma_U = \gamma_V$ instead, if there is no risk of confusion. An isomorphism class of $\cF$-structures is called an \emph{unlabeled} $\cF$-object.

We will mostly be interested in subspecies of the species of finite simple graphs and use basic species such as the species of linear orders $\Seq$ or the $\Set$-species defined by $\Set[U] = \{U\}$ for all $U$. Moreover let $0$ denote the empty species, $1$ the species with a single object of size $0$ and $\cX$ the species with a single object of size $1$.

We set $[n] := \{1, \ldots, n\}$ and $\cF_n := \cF[n] = \cF[\{1, \ldots, n\}]$ for all $n \in\mathbb{N}_0$. By abuse of notation we will often let $\cF$ also refer to the set $\cup_n \cF_n$. The \emph{exponential generating series} of a combinatorial species $\cF$ is defined by
$
 F(x) = \sum_{n \ge 0} |\cF_n| x^n/n!
$.
In general, this is a formal power series that may have radius of convergence zero. If the series $F(x)$ has positive radius of convergence, we say that $\cF$ is an \emph{analytic species} and $F(x)$ is its \emph{exponential generating function}. For any power series $f(x)$ we let $[x^n]f(x)$ denote the coefficient of $x^n$.

\subsection{Operations on Species}
\label{prelim3}
The framework of combinatorial species offers a large variety of constructions that create new species from others. In the following let $\cF$, $(\cF_i)_{i \in \ndN}$ and $\cG$ denote species and $U$ an arbitrary finite set.
The {\em sum} $\cF + \cG$ is defined by the disjoint union
\[
(\cF + \cG)[U] = \cF[U] \sqcup \cG[U].
\] 
More generally, the infinite sum $(\sum_i \cF_i)$ may be defined by
$
(\sum\nolimits_i \cF_i)[U] = \bigsqcup\nolimits_i \cF_i[U]
$
if the right hand side is finite for all finte sets $U$. The {\em product} $\cF \cdot \cG$ is defined by the disjoint union
\[
(\cF \cdot \cG)[U] = \bigsqcup_{\mathclap{\substack{ (U_1, U_2) \\ U_1 \cap U_2 = \emptyset, U_1 \cup U_2 = U}}} \cF[U_1] \times \cG[U_2] \\
\]
with componentwise transport. Thus, $n$-sized objects of the product are pairs of $\cF$-objects and $\cG$-objects whose sizes add up to $n$. If the species $\cG$ has no objects of size zero, we can form the {\em substitution} $\cF \circ \cG$ by
\[
(\cF \circ \cG)[U] = \bigsqcup_{\mathclap{\substack{ \pi \text{ partition of } U}}} \cF[\pi] \times \prod_{Q \in \pi} \cG[Q]. \\
\]
An object of the substition may be interpreted as an $\cF$-object whose labels are substituted by $\cG$-objects. The transport along a bijection $\sigma$ is defined by applying the induced map $\overline{\sigma}: \pi \to \overline{\pi}=\{\sigma(Q) \mid Q \in \pi \}$ of partitions to the $\cF$-object and the restriced maps $\sigma|_Q$ with $Q \in \pi$ to their corresponding $\cG$-objects. We will often write $\cF(\cG)$ instead of $\cF \circ \cG$. The {\em rooted} or {\em pointed} $\cF$-species is given by
\[
\cF^{\bullet}[U] = U \times \cF[U] \\
\]
with componentwise transport. That is, a pointed object is formed by distinguishing a label, named the \emph{root} of the object, and any transport function is required to preserve roots. The {\em derived} species $\cF'$ is defined by
\[
\cF'[U] = \cF[U \cup \{*_U\}]
\]
with  $*_U$ referring to an arbitrary fixed element not contained in the set~$U$. (For example, we could take $*_U = U$.) The transport along a bijective map $\sigma: U \to V$ is done by applying the canonically extended bijection $\sigma': U \sqcup \{*_U\} \to V \sqcup \{*_V\}$ with $\sigma'(*_U)=*_V$ to the object. Derivation and pointing are related by an isomorphism $\cF^\bullet \simeq X \cdot \cF'$. Note that $\cF'^\bullet$ and ${\cF^\bullet}'$ are in general different species, since a $\cF'^\bullet$-object may be rooted only at a non-$*$-label.

Explicit formulas for the exponential generating series of these constructions are summarized in Table~\ref{tb:egs}. The notation is quite suggestive: up to (canonical) isomorphism, each operation considered in this section is associative. The sum and product are commutative operations and satisfy a distributive law, i.e.
\begin{equation}
\label{eq:distrLaw}
\cF \cdot (\cG_1 + \cG_2) \simeq \cF \cdot \cG_1 + \cF \cdot \cG_2.
\end{equation}
The operation of deriving a species is additive and satisfies a product rule and a chain rule, analogous to the derivative in calculus:
\begin{equation}
\label{eq:chainRule}
(\cF \cdot \cG)' \simeq \cF' \cdot \cG + \cF \cdot \cG' \quad \text{and} \quad \cF(\cG)' \simeq \cF'(\cG) \cdot \cG'.
\end{equation}
Recall that for the chain rule to apply we have to require $\cG[\emptyset] = \emptyset$, since otherwise $\cF(\cG)$ is not defined. A thorough discussion of these facts is beyond the scope of this introduction. We refer the inclined reader to Joyal \cite{MR633783} and Bergeron, Labelle and Leroux \cite{MR1629341}.

{
\small
\renewcommand{\arraystretch}{1.2}
\begin{table}
	\begin{center}
	\begin{tabular}{lll}
    \begin{tabular}{ | l | l |}
     \hline
     $\sum_i \cF_i$ & $\sum_i F_i(x)$ \\
     $\cF \cdot \cG$ & $F(x) G(x)$ \\
     $\cF \circ \cG$ & $F(G(x))$ \\
     $\cF^\bullet$ & $x \frac{d}{dx} F(x)$ \\
     $\cF'$ & $\frac{d}{dx} F(x)$ \\
     \hline
    \end{tabular} & \qquad \qquad\qquad &
    \begin{tabular}{ | l | l |}
     \hline
     $\Set$ & $\exp(x)$ \\
     $\Seq$ & $1/(1-x)$ \\
     $0$ & $0$ \\
     $1$ & $1$ \\
     $\cX$ & $x$ \\
     \hline
    \end{tabular}
    \end{tabular}
	\end{center}
	\caption{Relation between combinatorial constructions and generating series.}
	\label{tb:egs}
\end{table}
}

\subsection{Combinatorial Specifications}
\label{prelim4}
In this section we briefly recall Joyal's implicit species theorem that allows us to define combinatorial species up to unique isomorphism and construct recursive samplers that draw objects of a species randomly (see Section~\ref{sec:Boltzmann} below). In order to state the theorem we need to introduce the concept of {\em multisort species}. As it is sufficient for our applications, we restrict ourselves to the 2-sort case.

A {\em $2$-sort species} $\cH$ is a functor that maps any pair $U= (U_1, U_2)$ of finite sets to a finite set $\cH[U] = \cH[U_1, U_2]$ and any pair $\sigma = (\sigma_1, \sigma_2)$ of bijections $\sigma_i: U_i \to V_i$ to a bijection $\cH[\sigma]: \cH[U] \to \cH[V]$ in such a way, that identity maps and composition of maps are preserved.  The operations of sum, product and composition extend naturally to the multisort-context. Let $\cH$ and $\cK$ be 2-sort species and $U = (U_1,U_2)$ a pair of finite sets. The {\em sum} is defined by
\[
(\cH + \cK)[U] = \cH[U] \sqcup \cK[U].
\]
We write $U = V + W$ if $U_i = V_i \cup W_i$ and $V_i \cap W_i = \emptyset$ for all $i$. The {\em product} is defined by
\[
(\cH \cdot \cK)[U] = \bigsqcup_{V + W = U} \cH[V] \times \cK[W].
\]
The {\em partial derivatives} are given by
\[
\partial_1 \cH[U] = H[U_1 \cup \{*_{U_1}\}, U_2] \quad \text{and} \quad \partial_2 \cH[U] = H[U_1, U_2 \cup \{*_{U_2}\}].
\] 
In order state Joyal's implicit species theorem we also require the substitution operation for multisort species; this will allow us to  define species ``recursively'' up to (canonical) isomorphism. Let $\cF_1$ and $\cF_2$ be (1-sort) species and $M$ a finite set. A structure of the {\em composition} $\cH(\cF_1, \cF_2)$ over the set $M$ is a quadrupel $(\pi, \chi, \alpha, \beta)$ such that: 
\begin{enumerate}
\item $\pi$ is partition of the set $M$.
\item $\chi: \pi \to \{1, 2\}$ is a function assigning to each class a sort.
\item $\alpha$ a function that assigns to each class $Q \in \pi$ a $\cF_{\chi(Q)}$ object $\alpha(Q) \in \cF_{\chi(Q)}[Q]$.
\item $\beta$ a $\cH$-structure over the pair $(\chi^{-1}(1), \chi^{-1}(2))$.
\end{enumerate}
This construction is {\em functorial}: any pair of isomorphisms $\alpha_1$, $\alpha_2$ with $\alpha_i: \cF_i \, \isom \, \cG_i$ {\em induces} an isomorphism $\cH[\alpha_1, \alpha_2]: \cH(\cF_1, \cF_2) \, \isom \, \cH(\cG_1, \cG_2)$.

Let $\cH$ be a $2$-sort species and recall that $\cX$ denotes the species with a unique object of size one. A solution of the system $\cY = \cH(\cX, \cY)$ is pair $(\cA, \alpha)$ of a species $\cA$ with $\cA[0] = 0$ and an isomorphism $\alpha: \cA  \,\isom\, \cH(\cX, \cA)$. An isomorphism of two solutions $(\cA, \alpha)$ and $(\cB, \beta)$ is an isomorphism of species $u:\cA \,\isom\, \cB$ such that the following diagram commutes:
\[
  			\xymatrix{ \cA \ar[d]^{u} \ar[r]^-{\alpha} &\cH(\cX,\cA)\ar[d]^{\cH(\text{id},u)}\\
  					   \cB \ar[r]^-{\beta} 		    &\cH(\cX,\cB)}
\]
We may now state Joyal's implicit species theorem.
\begin{theorem}[\cite{MR633783}, Théorème 6]
\label{te:implicitspecies}
Let $\cH$ be a 2-sort species satisfying $\cH(0,0)=0$. If $(\partial_2 \cH)(0,0)=0$, then the system $\cY = \cH(\cX, \cY)$ has up to isomorphism only one solution. Moreover, between any two given solutions there is exactly one isomorphism.
\end{theorem}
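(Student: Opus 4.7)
The plan is to prove existence and uniqueness by strong induction on the cardinality of the label set, using the two hypotheses $\cH(0,0)=0$ and $(\partial_2\cH)(0,0)=0$ to make the recursion well-founded.

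First I would unpack what the two vanishing conditions say concretely. A structure of $\cH(\cX,\cY)$ over $M$ is a quadruple $(\pi,\chi,\alpha,\beta)$ where $\pi$ partitions $M$, $\chi$ assigns a sort to each class, $\alpha$ attaches an $\cX$-structure to each sort-$1$ class (forcing that class to be a singleton) and a $\cY$-structure to each sort-$2$ class $Q$, and $\beta\in\cH[\chi^{-1}(1),\chi^{-1}(2)]$. Evaluating at zero species and singleton label sets, $\cH(0,0)=0$ translates to $\cH[\emptyset,\emptyset]=\emptyset$, and $(\partial_2\cH)(0,0)=0$ translates to $\cH[\emptyset,\{*\}]=\emptyset$. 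The first condition forces $\cA[\emptyset]=\emptyset$, and the second forces that every sort-$2$ class $Q$ appearing in a decomposition of an $\cH(\cX,\cA)$-structure on $M$ satisfies $|Q|<|M|$: the only way to have a sort-$2$ class of size $|M|$ is $\pi=\{M\}$ with $\chi(M)=2$, which would require a $\beta\in\cH[\emptyset,\{M\}]=\emptyset$.

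Next, for existence I would construct $\cA$ by strong induction on $n=|U|$. Set $\cA[\emptyset]:=\emptyset$ and $\alpha_\emptyset$ to be the empty bijection, which works by the first vanishing condition. For $n>0$, assuming $\cA[V]$ and transports $\cA[\tau]$ are already defined for all $|V|<n$ in a way compatible with composition and identities, define
\[
\cA[U]:=\cH(\cX,\cA)[U]\quad\text{and}\quad \alpha_U:=\mathrm{id}.
\]
This is meaningful because, by the second vanishing condition, every sort-$2$ class appearing in a structure of $\cH(\cX,\cA)[U]$ has size strictly less than $n$, so the substituted $\cA$-structures live in sets on which $\cA$ is already defined. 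The transport $\cA[\sigma]$ for a bijection $\sigma:U\to V$ is then forced to equal the transport $\cH(\cX,\cA)[\sigma]$, whose action is defined componentwise: relabel the partition, relabel each singleton $\cX$-piece, apply the inductively known transport to each sort-$2$ substructure, and apply $\cH[\sigma|_{\chi^{-1}(1)},\sigma|_{\chi^{-1}(2)}]$ to $\beta$. Functoriality (preservation of composition and identities) follows in a straightforward diagrammatic check from functoriality of $\cH$ and the inductive hypothesis.

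For uniqueness, given two solutions $(\cA,\alpha)$ and $(\cB,\beta)$, I would construct the unique isomorphism $u:\cA\isom\cB$ by strong induction on $|U|$. On the empty set there is nothing to do since $\cA[\emptyset]=\cB[\emptyset]=\emptyset$. For $|U|=n>0$, a structure $a\in\cA[U]$ is sent via $\alpha_U$ to some $(\pi,\chi,\tilde\alpha,\beta)\in\cH(\cX,\cA)[U]$; replace each sort-$2$ substructure $\tilde\alpha(Q)\in\cA[Q]$ by $u_Q(\tilde\alpha(Q))\in\cB[Q]$, which is defined because $|Q|<n$, and set $u_U(a):=\beta_U^{-1}$ applied to the resulting element of $\cH(\cX,\cB)[U]$. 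This is forced by the commutativity requirement of the isomorphism diagram, so it is the only candidate, and running the analogous construction backwards with $u^{-1}$ shows $u_U$ is bijective. Compatibility with transports again reduces via $\alpha$ and $\beta$ to compatibility at smaller sizes and functoriality of $\cH$.

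The main obstacle I expect is purely bookkeeping: verifying that the inductively defined transport maps respect composition and that the inductively defined $u$ assembles into a genuine natural isomorphism of species. The mathematical content is concentrated in the two vanishing hypotheses, which guarantee that the recursion strictly decreases the size on the $\cY$-slots; once this is in place the rest is a standard, if somewhat tedious, verification of the functoriality axioms inherited from $\cH$.
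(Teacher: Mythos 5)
The paper does not prove this statement; it is quoted verbatim from Joyal (Th\'eor\`eme 6 of \cite{MR633783}), so there is no internal proof to compare against. Your argument is correct and is essentially the standard proof of the implicit species theorem: you correctly translate $\cH(0,0)=0$ into $\cH[\emptyset,\emptyset]=\emptyset$ and $(\partial_2\cH)(0,0)=0$ into $\cH[\emptyset,\{*\}]=\emptyset$ (hence, by transport, $\cH[\emptyset,Q]=\emptyset$ for every one-element set $Q$), deduce that every sort-$2$ block in a decomposition of a structure on $M$ has size strictly less than $|M|$, and then build the solution and the unique isomorphism by strong induction on the cardinality of the label set, with each $u_U$ forced by the commuting square. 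The only points deserving explicit care in a full write-up are the ones you already flag: the induction must carry along the transport maps (i.e.\ one inductively constructs the restriction of the species to sets of size $<n$ as a genuine functor before forming $\cH(\cX,\cA)[U]$ for $|U|=n$), and naturality of $u$ reduces to naturality at smaller sizes plus functoriality of $\cH$.
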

\noindent We say that an isomorphism $\cF \simeq \cH(\cX, \cF)$ is a {\em combinatorial specification} for the species $\cF$ if the 2-sort species $\cH$ satisfies the requirements of Theorem~\ref{te:implicitspecies}.

\subsection{Block-Stable Graph Classes}
\label{prelim5}
Any graph may be decomposed into its {\em connected components}, i.e. its maximal connected subgraphs. These connected components allow a {\em block-decomposition} which we recall in the following. Let $C$ be a connected graph. If removing a vertex $v$ (and deleting all adjacent edges) disconnects the graph, we say that $v$ is a {\em cutvertex} of $C$. The graph $C$ is {\em 2-connected}, if it has size at least three and no cutvertices. 

A {\em block} of an arbitrary graph $G$ is a maximal connected subgraph $B \subset G$ that does not have a cutvertex (of itself). It is well-known, see for example \cite{MR2744811}, that any block is either $2$-connected or an edge or a single isolated point. Moreover, the intersection of two blocks is either empty or a cutvertex of a connected component of $G$. If $G$ is connected, then the bipartite graph whose vertices are the blocks and the cutvertices of $G$ and whose edges are pairs $\{v, B\}$ with $v \in B$ is a tree and called the {\em block-tree} of~$G$.

Let $\cG$ denote a subspecies of the species of graphs, $\cC \subset \cG$ the subspecies of connected graphs in $\cG$ and $\cB \subset \cC$ the subspecies of all graphs in $\cC$, that are $2$-connected or consist of only two vertices joined by an edge.
We say that $\cG$ or $\cC$ is a {\em block-stable} class of graphs, if $\cB \ne 0$ and $G \in \cG$ if and only if every block of $G$ belongs to $\cB$ or is a single isolated vertex. 
Block-stable classes satisfy the following combinatorial specifications that can be found for example in Joyal \cite{MR633783}, Bergeron, Labelle and Leroux \cite{MR1629341} and Harary and Palmer \cite{MR0357214}:
\begin{align}
	\label{eq:blockstability_classes}
\cG \simeq \Set \circ \cC \qquad \text{and} \qquad \cC^\bullet \simeq \cX \cdot (\Set \circ \cB' \circ \cC^\bullet).
\end{align}
The first correspondence expresses the fact that we may form any graph on a given vertex set $U$ by partitioning $U$ and constructing a connected graph on each partition class. The specification for rooted connected graphs, illustrated in Figure~\ref{fi:decompblockstable}, is based on the construction of the block-tree. The idea is to interpret $\cB' \circ \cC^\bullet$-objects as graphs by connecting the roots of the $\cC^\bullet$ objects on the partition classes and the $*$-vertex with edges according to the $\cB'$-object on the partition. An object of $\Set \circ (B' \circ \cC^\bullet)$ can then be interpreted as a graph by identifying the $*$-vertices of the $B' \circ \cC^\bullet$ objects. This construction is compatible with graph isomorphisms, hence $\cC' \simeq \Set \circ \cB' \circ \cC^\bullet$ and the second specification in \eqref{eq:blockstability_classes} follows. By the rules for computing the generating series of species we obtain the equations
\begin{align}
\label{eq:blockstability_series}
G(x) = \exp(C(x)) \qquad \text{and} \qquad C^{\bullet}(x) &= x \exp(B'( C^{\bullet}(x))).
\end{align}

\begin{figure}[ht]
	\centering
	\begin{minipage}{1.0\textwidth}
  		\centering
  		\includegraphics[width=0.66\textwidth]{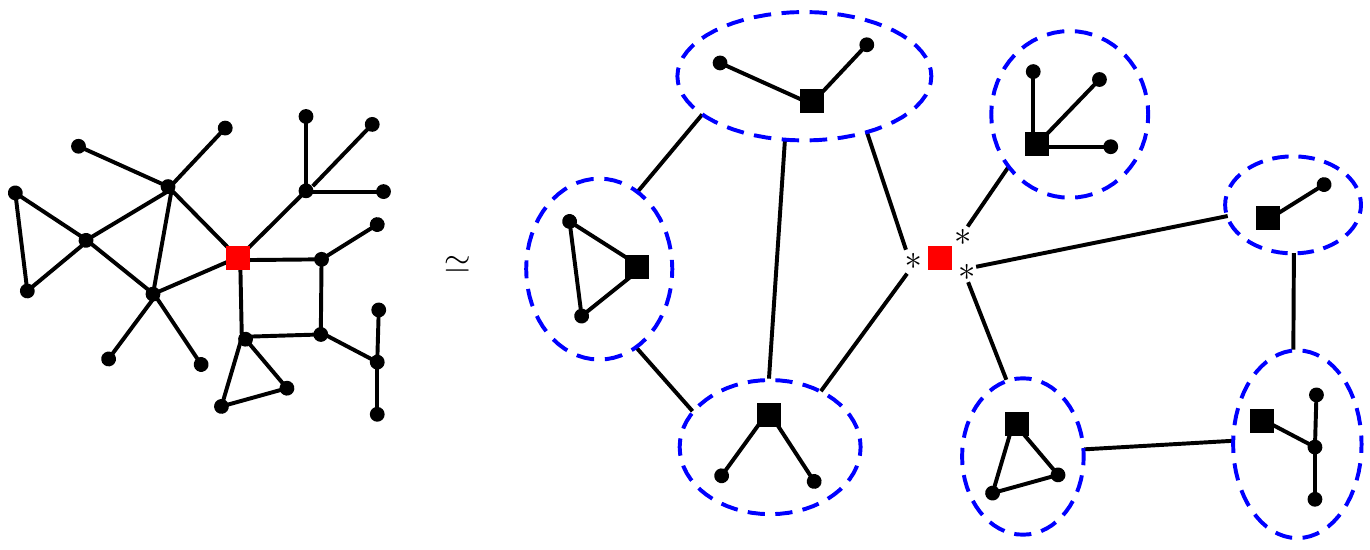}
  		\caption{Decomposition of a rooted graph from $\cC^\bullet$ into a $\cX \cdot (\Set \circ \cB' \circ \cC^\bullet)$ structure. Labels are omitted and the roots are marked with squares.}
  		\label{fi:decompblockstable}
	\end{minipage}
\end{figure}

The following lemma was given in Panagiotou and Steger \cite{MR2675698} and Drmota et al.~\cite{MR2873207} under some minor additional assumptions.
\begin{lemma}
\label{le:cfinite}
Let $\cC$ be a block-stable class of connected graphs, $\cB \ne 0$ its subclass of all graphs that are 2-connected or a single edge. Then the exponential generating series $C(z)$ has radius of convergence $\rhoc < \infty$ and the sums $y := C^\bullet(\rhoc)$ and $\lambdac := B'(y)$ are finite and satisfy
\begin{align}
\label{eq:blockconst}
y = \rhoc \exp(\lambdac).
\end{align}
\end{lemma}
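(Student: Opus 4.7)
The plan is to exploit the functional equation $C^\bullet(x) = x\exp(B'(C^\bullet(x)))$ derived in~\eqref{eq:blockstability_series} and to invert the real substitution that it encodes. Introduce $\phi(u) := \exp(B'(u))$, so that the equation reads $y(x) = x\phi(y(x))$ with $y := C^\bullet$. Since $\cB$ contains no single-vertex graph, $B'(0) = |\cB_1| = 0$ and thus $\phi(0) = 1$; on the other hand $\cB \neq 0$ together with the fact that every graph in $\cB$ has at least two vertices forces $B'$ to have at least one strictly positive Taylor coefficient, so $\phi - 1$ is a nonzero formal power series with non-negative coefficients.

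Next I would examine the substitution $\Psi(y) := y/\phi(y) = y\exp(-B'(y))$ on $[0,\rhob)$, where $\rhob \in (0,\infty]$ denotes the radius of convergence of $B'$. From $\Psi(0) = 0$ and $\Psi'(0) = 1/\phi(0) = 1$, the map $\Psi$ admits an analytic local inverse at the origin which, by the uniqueness of formal power series solutions to $y = x\phi(y)$, must coincide with $C^\bullet$. Monotonicity of $C^\bullet$ on the positive real axis (ensured by its non-negative Taylor coefficients) gives a well-defined monotone limit $\tau := \lim_{x \uparrow \rhoc} C^\bullet(x) \in [0,\infty]$, and continuity of $\Psi$ forces $\rhoc = \Psi(\tau)$ whenever $\tau$ is finite.

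The heart of the argument is thus the verification that $\tau < \infty$. If $\rhob < \infty$, then the functional equation forces $C^\bullet(x) < \rhob$ for every $x \in [0,\rhoc)$, since otherwise $B'(C^\bullet(x))$ would diverge; passing to the limit yields $\tau \leq \rhob < \infty$. If instead $\rhob = \infty$, the positive coefficient of $B'$ at some $u^{k-1}$ with $k \geq 2$ gives $B'(y) \geq c y^{k-1}$ eventually, whence $\phi(y) = e^{B'(y)}$ grows super-polynomially; in particular $\phi(y)/y \to \infty$ as $y \to \infty$. But the functional equation rewrites as $\phi(C^\bullet(x))/C^\bullet(x) = 1/x$, so if $\tau = \infty$ then along $x \uparrow \rhoc$ the right-hand side would tend to $1/\rhoc$ while the left-hand side tends to $\infty$, a contradiction (provided $\rhoc > 0$; the degenerate case $\rhoc = 0$ gives $y = 0$ and $\lambdac = 0$, making~\eqref{eq:blockconst} hold tautologically). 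Hence $\tau$ is always finite and $\rhoc = \Psi(\tau) = \tau/\phi(\tau)$ is finite as well.

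Finally, set $y := \tau = C^\bullet(\rhoc)$. Continuity of $\phi$ on $[0,\tau]$ extends the functional equation to the boundary and yields $y = \rhoc\phi(y) = \rhoc\exp(B'(y))$, so $\lambdac := B'(y) = \ln(y/\rhoc) < \infty$ and~\eqref{eq:blockconst} follows. The main technical obstacle is the justification that $\tau$ is finite in the case $\rhob = \infty$: one must compare the super-polynomial growth of $\phi$ with the at most linear growth dictated by $\phi(C^\bullet(x)) = C^\bullet(x)/x$, exploiting that an exponential of any non-constant power series with non-negative coefficients grows faster than any polynomial. The remaining steps reduce to elementary continuity and monotonicity arguments.
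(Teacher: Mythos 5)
Your argument is correct, but it reaches the key finiteness statement by a different mechanism than the paper. The paper stays entirely at the level of coefficients: since $[z^k]B'(z)>0$ for some $k\ge 1$, the square term in the expansion of $\exp(B'(\cdot))$ yields the coefficientwise inequality $C^\bullet(z)\ge c\,z\,C^\bullet(z)^{2k}$ with $c>0$ and exponent $2k\ge 2$, which in one stroke bounds $C^\bullet(x)\le (cx)^{-1/(2k-1)}$ near $\rhoc$ and (together with $C^\bullet(x)\ge x$) forces $\rhoc<\infty$, with no case distinction on $\rhob$. You instead run a real-variable growth comparison, splitting on whether $\rhob$ is finite: for $\rhob<\infty$ you cap $C^\bullet(x)$ by $\rhob$, and for $\rhob=\infty$ you play the superlinear growth of $\phi(y)/y$ against the bounded quantity $1/x$. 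Both routes ultimately exploit the same fact --- that $\exp(B'(y))$ grows faster than linearly because $B'$ is a nonzero series with nonnegative coefficients vanishing at $0$ --- but the paper's version is shorter and uniform, while yours is perhaps more transparent about \emph{why} the functional equation cannot sustain $\tau=\infty$. Two small points to tighten: in the case $\rhob<\infty$ you can only conclude $C^\bullet(x)\le\rhob$ (not $<$), since $B'$ may converge at its radius; this does not affect $\tau\le\rhob<\infty$. And the final step ``continuity of $\phi$ on $[0,\tau]$'' quietly assumes $B'(\tau)<\infty$; the clean way is to pass to the monotone limit in $C^\bullet(x)=x\exp(B'(C^\bullet(x)))$ allowing the value $+\infty$, and then observe that finiteness of $\tau$ together with $\rhoc>0$ forces $\exp(B'(\tau))=\tau/\rhoc<\infty$, hence $\lambdac<\infty$ --- which is exactly the paper's closing nonnegativity argument.
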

\begin{proof}
It suffices to consider the case $\rhoc > 0$. By assumption we have $\cB \ne 0$ and hence there is a $k\in\mathbb{N}$ such that $[z^k]B'(z) \ne 0$. Thus, by~\eqref{eq:blockstability_series} we have, say, $C^\bullet(z) = c z C^\bullet(z)^{2k} + R(z)$  for some constant $c>0$ and $R(z)$ a power series in $z$ with nonnegative coefficients. This implies $\lim_{x \uparrow \rhoc} C^\bullet(x) < \infty$ and thus $\rhoc$ and $C^\bullet(\rhoc)$ are both finite. The coefficients of all power series involved in~\eqref{eq:blockstability_series} are nonnegative, and so  it follows that $y = \rhoc \exp(\lambda)$ and thus $\lambda < \infty$.
\end{proof}

We will only be interested in the case where $\cC$ is analytic. The following observation (made for example also in \cite{MR3184197}) shows that this is equivalent to requiring that $\cB$ is analytic. We include a short proof for completeness.
\begin{proposition}
\label{pro:analytic}
Let $\cC$ be a block-stable class of connected graphs, $\cB \ne 0$ its subclass of all graphs that are 2-connected or a single edge. Then $\cC$ is analytic if and only if $\cB$ is analytic.
\end{proposition}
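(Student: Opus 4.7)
The plan is to prove the two directions separately, using the functional equation $C^\bullet(x) = x \exp(B'(C^\bullet(x)))$ from \eqref{eq:blockstability_series} together with Lemma~\ref{le:cfinite}. The easy direction is $(\cC \text{ analytic}) \Rightarrow (\cB \text{ analytic})$, which follows almost immediately from the lemma; the somewhat more technical direction uses the analytic inverse function theorem.

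\textbf{Direction 1: $\cC$ analytic $\Rightarrow$ $\cB$ analytic.} Assume $\rhoc > 0$. By Lemma~\ref{le:cfinite}, the quantities $y = C^\bullet(\rhoc)$ and $\lambdac = B'(y)$ are both finite. Since $\cC$ contains at least the single isolated vertex, $C^\bullet$ has a nonzero linear coefficient, so $y \ge \rhoc > 0$. Hence the power series $B'(z)$, whose coefficients are nonnegative, converges at the positive real number $y$, so $B'$ has radius of convergence at least $y > 0$. Since $B'$ and $B$ share the same radius of convergence, $\cB$ is analytic.

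\textbf{Direction 2: $\cB$ analytic $\Rightarrow$ $\cC$ analytic.} Rewrite the functional equation \eqref{eq:blockstability_series} in the form
\[
x = C^\bullet(x) \exp\bigl(-B'(C^\bullet(x))\bigr) = \phi(C^\bullet(x)), \qquad \text{where}\quad \phi(u) := u \exp(-B'(u)).
\]
Because every block has size at least two, $B'[\emptyset] = \cB[\{*\}] = \emptyset$, so $B'(0) = 0$. Hence, if $B'$ is analytic near $0$, then so is $\phi$, and one computes $\phi(0) = 0$ and $\phi'(0) = \exp(-B'(0)) = 1 \ne 0$. By the analytic inverse function theorem, $\phi$ admits an analytic inverse $\phi^{-1}$ on a neighborhood of $0$. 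The functional equation identifies $C^\bullet$ as the formal power series solution of $\phi(C^\bullet(x)) = x$ with $C^\bullet(0) = 0$; since this solution is unique in the ring of formal power series and $\phi^{-1}$ provides such a solution, we obtain $C^\bullet = \phi^{-1}$ in a neighborhood of $0$. Therefore $C^\bullet$ is analytic at $0$, and since $\cC^\bullet \simeq \cX \cdot \cC'$ gives $C^\bullet(x) = xC'(x)$, both $C'$ and $C$ have positive radius of convergence, proving $\cC$ is analytic.

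\textbf{Expected obstacle.} Neither direction is intricate, but the only place that could cause trouble is the verification that the hypotheses of the analytic inverse function theorem are satisfied in Direction 2, and in particular the observation $B'(0) = 0$, which relies on the convention that $\cB$ has no objects of size $1$ (blocks have size $\ge 2$). Once this is in place, everything reduces to standard facts about generating functions with nonnegative coefficients.
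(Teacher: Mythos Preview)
Your proof is correct and follows essentially the same approach as the paper: the forward direction uses finiteness of $B'(y)$ at a positive argument (the paper phrases this more tersely via coefficient nonnegativity, you route it through Lemma~\ref{le:cfinite}, but the content is identical), and the backward direction applies the analytic inverse function theorem to the block-stability equation, identifying the local inverse with $C^\bullet$ by uniqueness of the formal power series solution (the paper invokes Lagrange inversion for this identification, which amounts to the same thing).
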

\begin{proof}
By nonnegativity of coefficients we see easily that $\rhoc > 0$ implies that $\cB$ is analytic. Conversely, suppose that $B(z)$ has positive radius of convergence $\rhob > 0$.  By the inverse function theorem, the block-stability equation $f(z) = z \exp(B'(f(z)))$ has an analytic solution whose expansion at the point $0$ agrees with the series $C^\bullet(z)$ by Lagrange's inversion formula. Hence $\cC$ is an analytic class.
\end{proof}

\subsection{$\cR$-enriched Trees} \label{sec:enriched} The class $\cTb$ of rooted trees\footnote{ {\em Arborescence} is the French word for rooted tree, hence the notation $\cA$.} is known to satisfy the decomposition 
\[\cTb \simeq \cX \cdot \Set(\cTb).\]
This is easy to see: in order to from a rooted tree on a given set of vertices, we choose a root vertex $v$, partition the remaining the vertices, endow each partition class with a structure of a rooted tree and connect the vertex $v$ with their roots.  More generally, given a species $\cR$ the class $\cA_\cR$ of $\cR$-enriched trees is defined by the combinatorial specification 
\[
\cA_\cR \simeq \cX \cdot \cR(\cA_\cR).
\]
In other words, a $\cR$-enriched tree is a rooted tree such that the offspring set of any vertex is endowed with a $\cR$-structure. Natural examples are labeled ordered trees, which are $\Seq$-enriched trees, and plane trees, which are unlabeled ordered trees. Ordered and unordered tree families defined by restrictions on the allowed outdegree of internal vertices also fit in this framework. $\cR$-enriched trees were introduced by Labelle \cite{MR642392} in order to provide a combinatorial proof of Lagrange Inversion. They have applications in various fields of mathematics, see for example \cite{MR3149696,MR682626,MR2673891}.
 
The combinatorial specification \eqref{eq:blockstability_classes} together with Theorem~\ref{te:implicitspecies} allows us to identify a block-stable graph class $\cCb$ with the class $\cR$-enriched trees where $\cR = \Set(\cB')$, that is, rooted trees from $\cTb$ where the offspring set of each vertex is partitioned into nonempty sets and each of these sets carries a $\cB'$-structure. Compare with Figure \ref{fi:decomptree}.

\begin{figure}[ht]
	\centering
	\begin{minipage}{1.0\textwidth}
  		\centering
  		\includegraphics[width=1.0\textwidth]{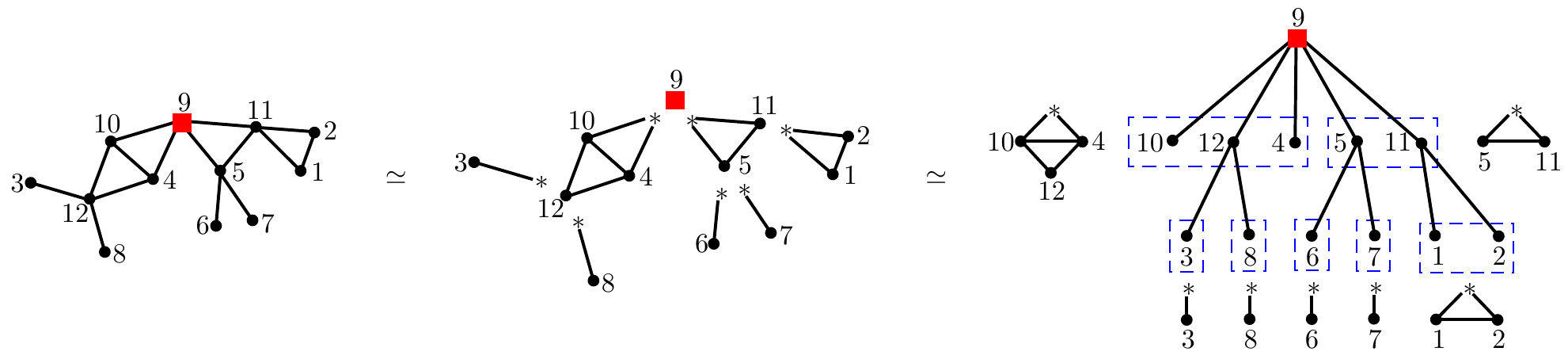}
  		\caption{Correspondence of the classes $\cCb$ and $\Set(\cB')$-enriched trees. }
  		\label{fi:decomptree}
	\end{minipage}
\end{figure}

\begin{corollary}
\label{co:encor}
Let $\cC$ be a block-stable class of connected graphs, $\cB \ne 0$ its subclass of all graphs that are 2-connected or a single edge. Then there is a unique isomorphism between~$\cCb$ and the class~$\cA_{\Set \circ \cB'}$ of pairs $(T, \alpha)$ with~$T \in \cTb$ and~$\alpha$ a function that assigns to each~$v \in V(T)$ a (possibly empty) set~$\alpha(v) \in (\Set \circ \cB')[M_v]$ of derived blocks whose vertex sets partition the offspring set $M_v$ of $v$. 
\end{corollary}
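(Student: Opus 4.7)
The plan is to apply Joyal's implicit species theorem (Theorem~\ref{te:implicitspecies}) to exhibit both $\cCb$ and $\cA_{\Set \circ \cB'}$ as solutions of the same recursive system $\cY \simeq \cX \cdot (\Set \circ \cB')(\cY)$, and then invoke uniqueness to obtain the claimed isomorphism. Concretely, I would set up the 2-sort species $\cH(\cX_1,\cX_2) = \cX_1 \cdot (\Set \circ \cB')(\cX_2)$ and verify its hypotheses: $\cH(0,0)=0$ because the factor $\cX_1$ vanishes, and $(\partial_2 \cH)(0,0)=0$ because differentiating in the second variable leaves the vanishing factor $\cX_1$ intact. Thus Theorem~\ref{te:implicitspecies} guarantees a solution of $\cY \simeq \cH(\cX, \cY)$ that is unique up to a unique isomorphism.

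It remains to check that both species in question are such solutions. For $\cCb$ this is precisely the second combinatorial specification in \eqref{eq:blockstability_classes}, namely $\cCb \simeq \cX \cdot (\Set \circ \cB' \circ \cCb)$; note that $\cCb[\emptyset] = \emptyset$ so that the substitution is well defined. For the class of $(\Set \circ \cB')$-enriched trees the corresponding specification is the very definition given in Subsection~\ref{sec:enriched}, that is $\cA_{\Set \circ \cB'} \simeq \cX \cdot (\Set \circ \cB')(\cA_{\Set \circ \cB'})$. By uniqueness, there is exactly one isomorphism $\cCb \isom \cA_{\Set \circ \cB'}$.

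To obtain the concrete description in the statement, I would unfold the recursion. Starting from a rooted connected graph $(G,r) \in \cCb$, the block decomposition packages $r$ together with the collection of blocks of $G$ containing $r$ (each derived at $r$), and each such block is attached via its other vertices to rooted connected subgraphs of $\cCb$, whose roots form the offspring of $r$ in the emerging tree. Iterating this step along the block-tree of $G$ produces a rooted tree $T$ on $V(G)$ together with a map $\alpha$ assigning to each $v \in V(T)$ the $(\Set \circ \cB')[M_v]$-structure of derived blocks incident to $v$, whose vertex sets partition the offspring set $M_v$ of $v$. The inverse construction glues the blocks back together along the tree, and induction on the size confirms that both directions are functorial in relabelings, hence define an isomorphism of species.

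No substantive obstacle is expected: the content of the statement is essentially a reformulation of \eqref{eq:blockstability_classes} via the implicit species theorem. The only mild bookkeeping point is to track carefully how the derived blocks carry their distinguished $\ast$-vertex (to be identified with the parent vertex in the tree), but this is exactly what the derivation $\cB'$ encodes, so the identification is canonical and no additional choices are needed.
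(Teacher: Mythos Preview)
Your proposal is correct and follows essentially the same approach as the paper: both identify $\cCb$ and $\cA_{\Set \circ \cB'}$ as solutions of the system $\cY \simeq \cX \cdot (\Set \circ \cB')(\cY)$ and invoke Joyal's implicit species theorem for the unique isomorphism. Your version adds the explicit verification of the hypotheses $\cH(0,0)=0$ and $(\partial_2\cH)(0,0)=0$ and an optional unfolding of the bijection, which the paper omits.
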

\begin{proof}
By the isomorphism given in \eqref{eq:blockstability_classes} the classes $\cA_{\Set \circ \cB'}$ and $\cC^\bullet$ are both solutions of the system $\cY = \cH(\cX, \cY)$ with $\cH(\cX, \cY) = \cX \cdot \Set \circ \cB' \circ \cY$. Joyal's Implicit Species Theorem~\ref{te:implicitspecies} yields that there is a unique isomorphism between any two solutions. 
\end{proof}

\subsection{Boltzmann Samplers} \label{sec:Boltzmann} Boltzmann samplers provide a method of  generating efficiently random discrete combinatorial objects. They were introduced in Duchon, Flajolet, Louchard, and Schaeffer \cite{MR2095975} and were developed further in Flajolet, Fusy and Pivoteau \cite{MR2498128}.
Following these sources we will briefly recall the theory of Boltzmann samplers to the extend required for the applications in this paper. Let $\cF \ne 0$ be an analytic species of structures and $F$ its exponential generating function. Given a parameter $x>0$ such that $0 < F(x) < \infty$, a Boltzmann sampler $\Gamma F(x)$ is a random generator that draws an object $\gamma \in \cF$ with probabilty 
\[
\Pr{\Gamma F(x) = \gamma} =  \frac{x^{|\gamma|}}{F(x) |\gamma|!}.
\]
In particular, if we condition on a fixed output size $n$, we get the uniform distribution on~$\cF_n$. We describe Boltzmann samplers using an informal pseudo-code notation. Given a specification of the species of structures $\cF$ in terms of other species using the operations of sums, products and composition, we obtain a Boltzmann sampler for $\cF$ in terms of samplers for the other species involved. The rules for the construction of Boltzmann samplers are summarized in Table~\ref{tb:boltzmann}. We let $\Bern{p}$ and $\Pois{\lambda}$ denote Bernoulli and Poisson distributed generators. 
{
\small
\renewcommand{\arraystretch}{1.15}
\begin{table}[h]
	\centering
	    \begin{tabular}{| l | l | l |}
	    	\hline
			$\cF = \cA + \cB$
					& \IF $\Bern{ A(x)/(A(x) + B(x)) }$ \THEN \RETURN $\Gamma A(x)$ \\
					& \ELSE \RETURN $\Gamma B(x)$
			\\ \hline 
			$\cF = \cA \cdot \cB$ 
					& \RETURN $(\Gamma A(x), \Gamma B(x))$ relabeled uniformly at random
			\\ \hline 
			$\cF = \cA \circ \cB$
					& $\gamma \leftarrow \Gamma A(y)$ with $y=B(x)$\\
					& \FOR $i = 1$ \TO $|\gamma|$ \\
					& \qquad $\gamma_i \leftarrow \Gamma B(x)$ \\
					& \RETURN $(\gamma, (\gamma_i)_i)$ relabeled uniformly at random
			\\ \hline 
			$\cF = \Set$
					& $m \leftarrow \Pois{x}$\\
					& \RETURN the unique structure of size $m$\\
			\hline	
   		\end{tabular}
	\caption{Rules for the construction of Boltzmann samplers.}
	\label{tb:boltzmann}
\end{table}
}

Note that if $\cF = \cA \mu \cB$ with  $\cA, \cB \ne 0$, $\mu \in \{+,\cdot, \circ\}$ and $0 < \cF(x) < \infty$, then the samplers of $\cA$ and $\cB$ are almost surely called with valid parameters, since the coefficients of all power-series involved are nonnegative.

Given a combinatorial specification $\cY \simeq \cH(\cX, \cY)$ satisfying the conditions of Theorem~\ref{te:implicitspecies}
we may apply the rules above to construct a recursive Boltzmann sampler that is guaranteed to terminate almost surely. This allows us to construct a Boltzmann sampler for block-stable graph classes. More specifically, let $\cC$ be a block-stable class of connected graphs such that the radius of convergence $\rhoc$ of the generating series $C(z)$ is positive. The rooted class $\cCb$ has a combinatorial specification given in~(\ref{eq:blockstability_classes}) in terms of the subclass $\cB$ of edges and 2-connected graphs. By Lemma~\ref{le:cfinite}, we know that $y=C^\bullet(\rhoc)$ and $\lambdac = B'(y)$ are finite. We obtain the following sampler which was used before in the study of certain block-stable graph classes, see for example \cite{MR2675698}. 
\begin{corollary}
\label{co:vanillaboltzmann}
Let $\cC$ be a block-stable class of connected graphs, $\cB \ne 0$ its subclass of all graphs that are 2-connected or a single edge. The following recursive procedure terminates almost surely and samples according to the Boltzmann distribution for $\cCb$ with parameter $\rhoc$.
\end{corollary}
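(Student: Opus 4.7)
The plan is to apply the construction rules of Table~\ref{tb:boltzmann} mechanically to the combinatorial specification $\cCb \simeq \cX \cdot \Set \circ \cB' \circ \cCb$ given in~\eqref{eq:blockstability_classes}. Lemma~\ref{le:cfinite} guarantees that $y := C^\bullet(\rhoc)$ and $\lambdac := B'(y)$ are both finite, so every sub-sampler invoked during the recursion is called at a parameter lying inside the domain of convergence of the relevant exponential generating function, and the procedure described below is well-defined.

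Unfolding the rules, $\Gamma C^\bullet(\rhoc)$ first emits a fresh root atom (the factor $\cX$); then it calls $\Gamma \Set(\lambdac)$ to obtain $N \sim \Pois{\lambdac}$ independent structures from $\Gamma(\cB' \circ \cCb)(\rhoc)$; each such call samples a derived block $\beta \sim \Gamma B'(y)$ and then for each atom of $\beta$ makes a recursive call to $\Gamma C^\bullet(\rhoc)$; finally the labels of the resulting structure are permuted uniformly at random. Correctness of the output distribution reduces to a structural induction: assuming every recursive call outputs $\gamma \in \cCb$ with probability $\rhoc^{|\gamma|}/(C^\bullet(\rhoc)\,|\gamma|!)$, the product-, composition-, and $\Set$-rules of Table~\ref{tb:boltzmann}---each a short generating-function computation using Table~\ref{tb:egs}---propagate this Boltzmann weight through the construction $\cX \cdot \Set \circ \cB' \circ \cCb$, and the isomorphism~\eqref{eq:blockstability_classes} transfers the distribution back onto $\cCb$ itself.

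The main obstacle is almost sure termination, since a recursive sampler for a self-referential specification could a priori diverge with positive probability. I would handle this by observing that, because $C^\bullet(\rhoc) < \infty$ by Lemma~\ref{le:cfinite}, the series
\[
\sum_{n \ge 0} \frac{\rhoc^n |\cCb_n|}{C^\bullet(\rhoc)\,n!} = 1
\]
is an honest probability distribution on $\mathbb{N}$, and by the induction of the preceding paragraph its $n$-th term equals the probability that the recursion halts after finitely many steps and outputs an object of size $n$. Summation then forces the probability of non-termination to vanish. This is the standard termination argument for recursive Boltzmann samplers built from combinatorial specifications satisfying the hypotheses of Theorem~\ref{te:implicitspecies}, and I would cite~\cite{MR2095975,MR2498128} for the general formulation rather than reprove it here.
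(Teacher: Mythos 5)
Your proposal is correct and follows essentially the same route as the paper: both apply the sampler-construction rules of Table~\ref{tb:boltzmann} to the specification $\cCb \simeq \cX \cdot \Set \circ \cB' \circ \cCb$ from~\eqref{eq:blockstability_classes}, use Lemma~\ref{le:cfinite} to ensure the parameters $y$ and $\lambdac$ are finite so all recursive calls are valid, and invoke the general theory of recursive Boltzmann samplers from~\cite{MR2095975,MR2498128} for correctness and almost sure termination. The only difference is that you spell out the standard termination argument (summing the Boltzmann weights of all finite outputs to $1$) which the paper leaves to the cited references; just be careful to phrase the induction as a computation of $\Pr{\Gamma C^\bullet(\rhoc) = \gamma}$ for each fixed finite $\gamma$ (which involves only finitely many random choices) rather than as an assumption about the recursive calls, so the argument is not circular.
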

\begin{tabular}{ll}
$\Gamma C^\bullet(\rhoc)$:  
		& $\gamma \leftarrow $ a single root vertex \\
		& $m \leftarrow \Pois{\lambda}$ \\
		& \FOR $k = 1 \ldots m$ \\
		& \qquad $B \leftarrow \Gamma B'(y)$, drop the labels \\
		& \qquad merge $\gamma$ with the $*$-vertex of $B$\\
		& \qquad \FOR \EACH non $*$-vertex $v$ of $B$ \\
		& \qquad \qquad $C \leftarrow \Gamma C^\bullet(\rhoc)$, drop the labels \\
		& \qquad \qquad merge $v$ with the root of $C$ \\
		& \RETURN $\gamma$ relabeled uniformly at random \\
\end{tabular}

\subsection{Subcritical Graph Classes}
\label{sec:subcri}
Let $\cC$ be a block-stable class of connected graphs and $\cB$ its subclass of all graphs that are 2-connected or a single edge with its ends. Assume that $\cB$ is nonempty and analytic, hence $\cC$ is analytic as well by Proposition~\ref{pro:analytic}. Denote by $\rhoc$ and $\rhob$ the radii of convergence of the corresponding exponential generating series $C(z)$ and $B(z)$. By Lemma~\ref{le:cfinite}, we know that $\rhoc$,  $y=C^\bullet(\rhoc)$ and $\lambdac = B'(y)$ are finite quantities.
The following proposition provides a coupling of a Boltzmann-distributed random graph drawn from the class $\cC$ with a Galton-Watson tree. This will play a central role in the proof of the main theorem.

\begin{proposition}
\label{pro:coupling}
Let $(\mT, \alpha)$ denote the enriched tree corresponding to the Boltzmann Sampler $\Gamma C^\bullet(\rhoc)$ given in Corollary \ref{co:vanillaboltzmann}. Then the rooted labeled unordered tree $\mT$ is distributed like the outcome of the following process:
\begin{enumerate}
\item[1.] Draw a Galton-Watson tree with offspring distribution $\xi$ given by the probability generating function $\varphi(z) = \exp(B'(yz) - \lambdac)$.
\item[2.] Distribute labels uniformly at random.
\item[3.] Discard the ordering on the offspring sets.
\end{enumerate}
\end{proposition}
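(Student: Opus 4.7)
The plan is to fix an arbitrary labeled rooted unordered tree $t$ on $[n]$, compute both $\Pr{\mT = t}$ from the Boltzmann sampler and the probability that the three-step process yields $t$, and verify that they coincide. Writing $M_v \subset [n]$ for the offspring set of $v \in V(t)$ (so that $\sum_v |M_v| = n-1$), I will show that both probabilities equal
\[
\frac{\rho^n}{C^\bullet(\rho)\, n!} \prod_{v \in V(t)} |M_v|!\, [x^{|M_v|}] \exp(B'(x)).
\]

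For the Boltzmann side, I would invoke the isomorphism $\cCb \simeq \cA_{\Set \circ \cB'}$ from Corollary~\ref{co:encor}: each $\cC^\bullet$-object on $[n]$ corresponds uniquely to an enriched tree $(T, \alpha)$ with $\alpha(v) \in (\Set \circ \cB')[M_v]$ for every vertex $v$. Since $\Gamma C^\bullet(\rho)$ draws each element of $\cC^\bullet_n$ with probability $\rho^n/(C^\bullet(\rho)\, n!)$, summing over all enrichments of the fixed tree $t$ yields
\[
\Pr{\mT = t} = \frac{\rho^n}{C^\bullet(\rho)\, n!} \prod_v |(\Set \circ \cB')[M_v]| = \frac{\rho^n}{C^\bullet(\rho)\, n!} \prod_v |M_v|!\, [x^{|M_v|}] \exp(B'(x)),
\]
where the count $|(\Set \circ \cB')[M_v]| = |M_v|!\, [x^{|M_v|}] \exp(B'(x))$ follows from the definition of the generating series.

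For the process side, a $\xi$-Galton-Watson tree produces any specific unlabeled plane shape $\tau_0$ with probability $\prod_v \Pr{\xi = d_v^+(\tau_0)}$. Since rooted plane trees have trivial automorphism group, uniform labeling picks out a specific labeled plane tree with conditional probability $1/n!$. Finally, forgetting the ordering collapses all labeled plane trees that share the underlying labeled unordered tree $t$; there are $\prod_v |M_v|!$ such labeled plane trees, one per choice of ordering at each offspring set, and although their unlabeled shapes may differ, they all have the same outdegree \emph{multiset} and hence the same GW-probability $\prod_v \Pr{\xi = |M_v|}$. Extracting coefficients from $\varphi(z) = \exp(B'(yz) - \lambdac)$ gives $\Pr{\xi = k} = e^{-\lambdac} y^k [x^k] \exp(B'(x))$, so
\[
\Pr{\text{process} = t} = \frac{1}{n!} \prod_v |M_v|!\, e^{-\lambdac} y^{|M_v|} [x^{|M_v|}] \exp(B'(x)) = \frac{e^{-n\lambdac}\, y^{n-1}}{n!} \prod_v |M_v|!\, [x^{|M_v|}] \exp(B'(x)).
\]

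It then remains to observe that $e^{-n\lambdac}\, y^{n-1} = \rho^n / C^\bullet(\rho)$, which follows immediately from the block-stability identity $y = \rho\, e^{\lambdac}$ of Lemma~\ref{le:cfinite} together with $y = C^\bullet(\rho)$. The main technical obstacle is disentangling three distinct layers of combinatorial structure: the enriched $\cC^\bullet$-object carrying the blocks, the labeled plane tree underlying the Galton-Watson step, and the labeled unordered tree $t$ itself; in particular one must recognise that the factor $\prod_v |M_v|!$ (from orderings of offspring) appears on both sides of the comparison for very different reasons, and that different labeled plane trees collapsing to $t$ can have different unlabeled shapes but the same GW-probability. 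Once these three layers are correctly separated, the entire proof reduces to the single algebraic identity characterising $y$.
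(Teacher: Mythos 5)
Your proof is correct, but it takes a genuinely different route from the paper's. The paper's argument is a one-line dynamic observation: by construction, every call of $\Gamma C^\bullet(\rhoc)$ spawns $|\Gamma(\Set \circ \cB')(y)|$ independent recursive calls of itself, which is exactly the branching mechanism of a Galton--Watson process with offspring distribution $\xi = |\Gamma(\Set\circ\cB')(y)|$, whose probability generating function one computes directly to be $\exp(B'(yz)-\lambdac)$. You instead perform a static, enumerative verification: fix a target labeled rooted unordered tree $t$, push the Boltzmann distribution on $\cC^\bullet_n$ through the enriched-tree bijection of Corollary~\ref{co:encor} and sum over enrichments, compute the law of the three-step process by counting the $\prod_v |M_v|!$ labeled plane trees collapsing to $t$ (correctly noting that these may have different plane shapes but identical outdegree multisets, hence identical GW-weight), and match the two expressions via $\rho = y e^{-\lambdac}$ from Lemma~\ref{le:cfinite}. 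What the paper's approach buys is brevity and the conceptual point that the coupling is realized \emph{within} the sampler itself (which is what is actually used later, e.g.\ for the spine decomposition in Section~\ref{sec:pointedenr}); what your approach buys is that it sidesteps the slightly delicate ``drop the labels / relabel uniformly'' bookkeeping of the recursive sampler and reduces everything to the stated output distribution plus one algebraic identity, making the equality of distributions fully explicit. Both arguments are complete and correct.
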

\begin{proof}
This follows from the fact that every instance of the sampler $\Gamma C^\bullet(\rhoc)$ given in Corollary~\ref{co:vanillaboltzmann} calls itself recursively $|\Gamma(\Set \circ \cB')(y)|$ many times. 
\end{proof}
Let $\xi$ denote the offspring distribution given in Proposition~\ref{pro:coupling}.
As discussed above, the rules governing Boltzmann samplers guarantee that the sampler $\Gamma \cC^\bullet(\rhoc)$ terminates almost surely. Hence we have
$1 \ge \Ex{\xi} = \varphi'_\ell(1) = yB''(y) =  B'^\bullet(y)$
and in particular $y \le \rhob$. We define subcriticality depending on whether this inequality is strict.
\begin{definition}
A block-stable class of connected graphs $\cC$ is termed subcritical if $y < \rhob$.
\end{definition}
\noindent Prominent examples of subcritical graph classes are trees, outerplanar graphs and series-parallel graphs; the class of planar graphs does not fall into this framework \cite{MR2873207, MR2534261}, i.e.\ it satisfies $y = \rhob$. The following lemma was proved in Panagiotou and Steger \cite[Lem.\ 2.8]{MR2675698} by analytic methods.
\begin{lemma}
\label{le:clarifysubc}
If $B'^\bullet(\rhob) \ge 1$, then $B'^\bullet(y)=1$. If $B'^\bullet(\rhob) \le 1$, then $y=\rhob$. In particular, $\cC$ is subcritical if and only if $B'^\bullet(\rhob) > 1$.  
\end{lemma}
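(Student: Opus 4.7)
The plan is to analyze the function $\psi(u) := u\exp(-B'(u))$, which encodes the block-stability equation as the implicit relation $\psi(C^\bullet(z)) = z$. Everything about $y$ and $\rhoc$ will be read off from the monotonicity behavior of $\psi$ on the real interval $[0,\rhob]$.

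First I would compute
\[
\psi'(u) = \exp(-B'(u))\bigl(1 - uB''(u)\bigr) = \exp(-B'(u))\bigl(1 - B'^\bullet(u)\bigr),
\]
so the sign of $\psi'$ is governed entirely by whether $B'^\bullet(u)$ is less than, equal to, or greater than $1$. Writing $B'^\bullet(u) = \sum_{n \ge 2}(n-1)|\cB_n| u^{n-1}/(n-1)!$ and using that $\cB \ne 0$ (so at least one coefficient is positive), $B'^\bullet$ is continuous and strictly increasing on $[0,\rhob)$ with $B'^\bullet(0)=0$, and extends to a value in $[0,+\infty]$ at $\rhob$ via monotone limit. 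Define
\[
u^* := \sup\{u \in [0,\rhob] \mid B'^\bullet(u) \le 1\},
\]
with the convention $u^* = \rhob$ when $B'^\bullet(\rhob) \le 1$. On $[0,u^*]$ the function $\psi$ is then nondecreasing (strictly so on $[0,u^*)$), so $\psi\colon [0,u^*] \to [0,\psi(u^*)]$ is an increasing continuous bijection.

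Next I would argue that $\rhoc = \psi(u^*)$ and $y = u^*$. Because $C^\bullet$ has nonnegative coefficients, Pringsheim's theorem ensures $\rhoc$ is a singularity of $C^\bullet$. Conversely, the implicit function theorem applied to $z = \psi(C^\bullet(z))$ shows $C^\bullet$ extends analytically to any real $z > 0$ for which $C^\bullet(z) < \rhob$ and $\psi'(C^\bullet(z)) \ne 0$. Thus $C^\bullet$ is the analytic inverse of $\psi\vert_{[0,u^*)}$ on $[0,\psi(u^*))$, and a singularity is forced at $z = \psi(u^*)$ (either because $\psi'$ vanishes there, producing a square-root-type singularity, or because $C^\bullet$ reaches the boundary $\rhob$ of the domain of $B'$). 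By monotone convergence along the positive real axis this identifies $y = C^\bullet(\rhoc) = u^*$.

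The lemma then follows by case analysis. If $B'^\bullet(\rhob) \ge 1$, the intermediate value theorem yields $u^* \in (0,\rhob]$ with $B'^\bullet(u^*) = 1$, hence $B'^\bullet(y) = 1$. If $B'^\bullet(\rhob) \le 1$, then by definition $u^* = \rhob$ and therefore $y = \rhob$. The ``in particular'' statement is immediate: $y < \rhob$ forces $u^* < \rhob$ and thus $B'^\bullet(\rhob) > 1$, while conversely $B'^\bullet(\rhob) > 1$ forces $u^* < \rhob$ via strict monotonicity of $B'^\bullet$, giving $y = u^* < \rhob$. The main technical care needed is at the boundary $u = \rhob$, where $B'(\rhob)$ or $B''(\rhob)$ may be infinite; this has to be handled by passing to the limit $u \uparrow \rhob$ within $\psi$ and combining with Pringsheim's theorem to rule out any spurious analytic continuation of $C^\bullet$ beyond $\psi(u^*)$.
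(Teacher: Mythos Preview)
Your argument is correct and is precisely the standard analytic route: invert the block-stability equation via $\psi(u) = u\exp(-B'(u))$, note $\psi'(u) = e^{-B'(u)}(1 - B'^\bullet(u))$, and use Pringsheim's theorem together with the inverse function theorem to pin down the dominant singularity of $C^\bullet$ at the first point where either $\psi'$ vanishes or $u$ hits $\rhob$. The case analysis and the ``in particular'' statement then follow from strict monotonicity of $B'^\bullet$.

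The paper itself does not give a proof of this lemma; it merely cites Panagiotou and Steger \cite[Lem.~2.8]{MR2675698} with the phrase ``by analytic methods''. Your write-up is exactly the argument that phrase alludes to. One minor streamlining you could exploit: immediately before the lemma the paper has already recorded that $B'^\bullet(y) \le 1$ and $y \le \rhob$ (from almost-sure termination of the sampler). Granting this, you can bypass the auxiliary quantity $u^*$ and argue directly by contradiction: if simultaneously $y < \rhob$ and $B'^\bullet(y) < 1$, then $\psi$ is analytic at $y$ with $\psi'(y) \ne 0$, so its local inverse furnishes an analytic continuation of $C^\bullet$ through $\rhoc = \psi(y)$, contradicting Pringsheim. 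Both assertions of the lemma are then immediate.
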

\noindent Thus, if $B'^\bullet(\rhob) \ge 1$, then the offspring distribution $\xi$ has expected value $1$ and variance
\[
\sigma^2 = 1 + B'''(y) y^2 = \Ex{|\Gamma B'^\bullet(y)|}
\]
with $\Gamma B'^\bullet(y)$ denoting a Boltzmann sampler for the class $\cB'^\bullet$ with parameter $y$. By Proposition~\ref{pro:coupling} the size of the outcome of the sampler $\Gamma \cC^\bullet(\rhoc)$ is distributed like the size of a $\xi$-Galton-Watson tree. Hence, we may apply Lemma~\ref{le:cycap} to obtain the following result, which was shown in~\cite{MR2873207} under stronger assumptions.
\begin{corollary}
\label{co:sizepoly}
Let $\cC$ be an analytic block-stable class of graphs, and let $\xi$ be the distribution from Proposition~\ref{pro:coupling}. Suppose that $B'^\bullet(\rhob) \ge 1$ and $B'''(y) < \infty$, i.e.\ $\xi$ has finite variance. Let $d = \spa(\xi)$. Then, as $n \equiv 1 \mod d$ tends to infinity,
\[
\Pr{|\Gamma \cC^\bullet(\rhoc)| = n} \sim{} \frac{d}{\sqrt{2 \pi \Ex{|\Gamma B'^\bullet(y)|}}} n^{-3/2}
\quad\text{ and }\quad
|\cC_n| \sim \frac{yd}{\sqrt{2 \pi \Ex{|\Gamma B'^\bullet(y)|}}} n^{-5/2} \rhoc^{-n} n!.
\]
\end{corollary}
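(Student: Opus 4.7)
The proof is a direct application of Lemma~\ref{le:gwtsize} to the Galton-Watson tree provided by Proposition~\ref{pro:coupling}. First I would verify that the offspring distribution $\xi$ satisfies the hypotheses of that lemma. The assumption $B'^\bullet(\rhob) \ge 1$ together with Lemma~\ref{le:clarifysubc} gives $\Ex{\xi} = B'^\bullet(y) = yB''(y) = 1$, so $\xi$ is critical. Differentiating the probability generating function $\varphi(z) = \exp(B'(yz) - \lambdac)$ twice at $z=1$ yields $\Ex{\xi(\xi-1)} = y^2 B'''(y) + (yB''(y))^2 = y^2 B'''(y) + 1$, whence
\[
\sigma^2 = \Va{\xi} = 1 + y^2 B'''(y),
\]
which is positive and finite under the hypothesis $B'''(y) < \infty$.

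Next I would identify this variance with $\Ex{|\Gamma B'^\bullet(y)|}$, as claimed in the paragraph preceding the corollary. Since the expected size of a Boltzmann sample $\Gamma F(x)$ equals $xF'(x)/F(x)$, applied to the species $\cB'^\bullet$ with generating series $B'^\bullet(z) = zB''(z)$ at $z = y$ and using $yB''(y) = 1$, one obtains
\[
\Ex{|\Gamma B'^\bullet(y)|} = \frac{y(B''(y) + yB'''(y))}{yB''(y)} = 1 + y^2 B'''(y) = \sigma^2.
\]

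With these ingredients, Proposition~\ref{pro:coupling} identifies $|\Gamma \cC^\bullet(\rhoc)|$ in distribution with $|\mT|$ for a critical $\xi$-Galton-Watson tree $\mT$, and Lemma~\ref{le:gwtsize} applied to this $\xi$ (along the progression $n \equiv 1 \mod d$) immediately yields the first asymptotic. For the second one I would translate the Boltzmann sampling probability back into a counting statement: since $\Gamma \cC^\bullet(\rhoc)$ outputs any given $\gamma \in \cC^\bullet$ with probability $\rhoc^{|\gamma|}/(C^\bullet(\rhoc)|\gamma|!)$, since $|\cC^\bullet_n| = n|\cC_n|$, and since $C^\bullet(\rhoc) = y$ by Lemma~\ref{le:cfinite}, we have
\[
\Pr{|\Gamma \cC^\bullet(\rhoc)| = n} = \frac{n |\cC_n| \rhoc^n}{y \cdot n!}.
\]
Solving for $|\cC_n|$ and substituting the first asymptotic gives the claimed $n^{-5/2}\rhoc^{-n}n!$ expansion.

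No step here is the real obstacle: the combinatorial coupling has already been installed in Proposition~\ref{pro:coupling}, the mean and variance computations are routine power series manipulations, and the conversion between a Boltzmann probability and a coefficient count is standard. The one point worth flagging is that Lemma~\ref{le:gwtsize} only delivers an asymptotic along the arithmetic progression $n \equiv 1 \mod d$, which is exactly the periodicity constraint stated in the corollary.
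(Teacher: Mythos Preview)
Your proposal is correct and follows exactly the approach the paper indicates in the text immediately preceding the corollary: use Proposition~\ref{pro:coupling} to identify $|\Gamma \cC^\bullet(\rhoc)|$ with the size of a $\xi$-Galton-Watson tree, verify via Lemma~\ref{le:clarifysubc} and the computation $\sigma^2 = 1 + y^2 B'''(y) = \Ex{|\Gamma B'^\bullet(y)|}$ that the hypotheses of Lemma~\ref{le:gwtsize} hold, and then convert the resulting size asymptotic into the coefficient asymptotic via the Boltzmann probability formula. Your write-up simply supplies the routine details the paper leaves implicit.
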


\subsection{Deviation Inequalities}
\label{sec:deviation}
We will make use of the following moderate deviation inequality for one-dimensional random walk found in most textbooks on the subject.

\begin{lemma}
\label{le:deviation}
Let $(X_i)_{i \in \ndN}$ be family of independent copies of a real-valued random variable $X$ with  $\Ex{X} = 0$. Let $S_n = X_1 + \ldots + X_n$. Suppose that there is a $\delta >0$ such that $\Ex{e^{\theta X}} < \infty$ for $|\theta| < \delta$. Then there is a $c>0$ such that for every $1/2 < p < 1$ there is a number $N$ such that for all $n \ge N$ and $0 < \epsilon < 1$ 
\[
\Pr{|S_n/n^p| \ge \epsilon } \le 2 \exp(- c \epsilon^2 n^{2p-1}).
\] 
\end{lemma}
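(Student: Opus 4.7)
The plan is a standard Chernoff/exponential Markov bound tuned for the moderate-deviations regime. Since the moment generating function $\varphi(\theta) := \Ex{e^{\theta X}}$ is finite on $(-\delta,\delta)$, it is smooth (even analytic) there, and using $\Ex{X} = 0$ a Taylor expansion at the origin gives
\[
\varphi(\theta) = 1 + \tfrac{1}{2}\sigma^2 \theta^2 + O(\theta^3), \qquad \theta \to 0,
\]
where $\sigma^2 = \Va{X} < \infty$. In particular, there exist $\eta \in (0,\delta)$ and a constant $M>0$ such that
\[
\varphi(\theta) \le \exp(M \theta^2) \quad \text{for all } |\theta|\le\eta.
\]

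I would then apply the standard bound. For $\theta>0$ with $\theta \le \eta$, independence of the $X_i$ and Markov's inequality give
\[
\Pr{S_n \ge \epsilon n^p} \le e^{-\theta \epsilon n^p}\varphi(\theta)^n \le \exp\!\bigl(-\theta \epsilon n^p + M n \theta^2\bigr).
\]
Optimizing in $\theta$, I pick $\theta = \theta_n := \epsilon n^{p-1}/(2M)$, which yields
\[
\Pr{S_n \ge \epsilon n^p} \le \exp\!\bigl(-c\, \epsilon^2 n^{2p-1}\bigr), \qquad c := \tfrac{1}{4M}.
\]
Applying the same argument to $-X$ (whose MGF is also finite on $(-\delta,\delta)$) and combining via a union bound delivers the factor $2$ in the stated inequality.

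The only point to check is that the chosen $\theta_n$ lies in the admissible range $[0,\eta]$. Since $p<1$ and $0<\epsilon<1$, we have $\theta_n \le n^{p-1}/(2M) \to 0$, so there exists $N = N(p,M,\eta)$, independent of $\epsilon\in(0,1)$, such that $\theta_n \le \eta$ for all $n\ge N$. This is the mildest of technicalities and is the only step where the condition $p<1$ is used (for $p=1$ one would instead fix $\theta$ and recover a genuine large-deviations bound). Thus no step is really an obstacle; the whole argument is a routine but careful Chernoff computation using that the MGF is finite in a two-sided neighbourhood of the origin.
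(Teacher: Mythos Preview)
Your argument is correct and is precisely the standard Chernoff/Cram\'er computation; the paper itself does not spell out a proof of this lemma but simply cites it as a textbook moderate deviation inequality, so your sketch is in fact more detailed than what the paper provides. The one point worth noting explicitly is that your constant $c = 1/(4M)$ depends only on the distribution of $X$ and not on $p$, which matches the quantifier order in the statement (a single $c$ works for all $p\in(1/2,1)$, while $N$ may depend on $p$).
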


\section{A Size-Biased Random $\cR$-enriched Tree}
\label{sec:pointedenr}
Let $\cC$ be an analytic block-stable class of connected graphs and $\cB \ne 0$ its subclass of graphs that are 2-connected or a single edge. As before we let $\rhoc$ denote the radius of convergence of the exponential generating series $C(z)$ and set $y = C^\bullet(\rhoc)$. Recall that by Corollary~\ref{co:encor} the class $\cCb$ may be identified with the class of $\cR$-enriched trees with $\cR := \Set \circ \cB'$, i.e.\ pairs $(T, \alpha)$ with $T \in \cTb$ a rooted labeled unordered tree and $\alpha$ a function that assigns to each $v \in V(T)$ a (possibly empty) set $\alpha(v)$ of derived blocks whose vertex sets partition the offspring set of the vertex $v$.

An important ingredient in our forthcoming argumets will be an accurate description of the distribution of the blocks on sufficiently long paths in random graphs from $\cC$. In order to study this distribution we will make use of a special case of a \emph{size-biased} random $\cR$-enriched tree. This construction is based on the size-biased Galton Watson tree introduced in \cite{MR3077536} and has several other applications and implications. A thorough study in a more general setting will be presented in Stufler~\cite{RandEnr}.

Recall that $\cA_\cR$ has the decomposition $\cA_\cR \simeq \cX \cdot \cR(\cA_\cR)$. By the rules~\eqref{eq:distrLaw} and~\eqref{eq:chainRule} governing operations on species we obtain algebraically
\begin{align*}
\cA_\cR^\bullet &\simeq \cA_\cR + \cX \cdot \cR'(\cA_\cR)\cdot \cA_\cR^\bullet \\
&\simeq \cA_\cR + \cX \cdot \cR'(\cA_\cR)\cdot \cA_\cR + (\cX \cdot \cR'(\cA_\cR))^2 \cdot \cA_\cR^\bullet \\
& \ldots \\
& \simeq  \sum_{\ell \ge 0} (\cX \cdot \cR'(\cA_\cR))^\ell \cA_\cR.
\end{align*}
The above calculation corresponds to taking a direct limit in the category of species, either directly or alternatively by application of Joyal's Implicit Species Theorem. Here $\cA_\cR^{(\ell)} := (\cX \cdot \cR'(\cA_\cR))^\ell \cA_\cR$ corresponds to the subspecies of all enriched trees $(T, \alpha)$ with a distinguished vertex $r$ such that $r$ has height $\ell$ in $T$.
It follows from the definition of the Boltzmann distribution that for any pointed enriched tree $(A,r) \in \cA_\cR^{(\ell)}$ we have that
\begin{align}
\label{eq:pureawesomeness}
\Pr{\Gamma A_R^{(\ell)}(\rho) = (A,r)} = \left(\rho R'(y)\right)^{-\ell}\Pr{\Gamma A_R(
\rho) = A}.
\end{align}
Translating the combinatorial specification for $\cA_\cR^{(\ell)}$ into a Boltzmann sampler $\Gamma A_R^{(\ell)}(\rho)$ yields the following procedure which we call the {\em size-biased $\cR$-enriched tree} (see also Figure~\ref{fi:pntenriched}).
Any vertex is either \emph{normal} or \emph{mutant}, and we start with a single mutant root. Normal vertices have an independent copy of $\Gamma R(y)$ as offspring. Mutant nodes have an independent copy of $\Gamma R^\bullet(y)$ as offspring and the root in the $\cR^\bullet$ object is declared mutant, unless its the $\ell$th copy of $\Gamma R^\bullet(y)$. By the theory of recursive Boltzmann samplers obtained from combinatorial specifications this procedure terminates almost surely. The sampler $\Gamma A_R^{(\ell)}(\rho)$ is obtained by additionally distributing labels uniformly at random.

\begin{figure}[ht]
	\centering
	\begin{minipage}{1.0\textwidth}
  		\centering
  		\includegraphics[width=0.6\textwidth]{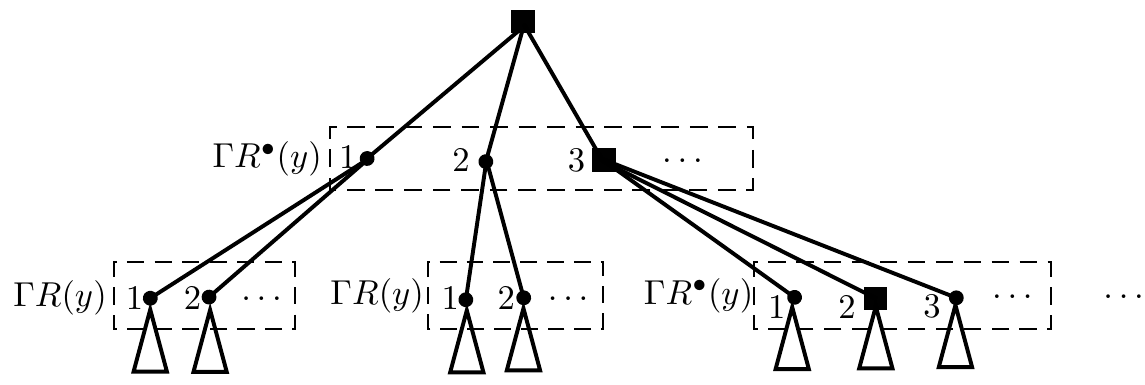}
  		\caption{Illustration of the sampler for the size-biased $\cR$-enriched tree.}
  		\label{fi:pntenriched}
	\end{minipage}
\end{figure}

We call the path connecting the inner root with the outer root in an $\cA_\cR^\bullet$-object the {\em spine}. Note that the $\cR$-objects along the spine of the random enriched tree $\Gamma A_R^{(\ell)}(\rho)$ are drawn according to $\ell$ independent copies of $\Gamma R^\bullet(y)$.

In our setting we have that $\cR = \Set \circ \cB'$, where $\cB \neq 0$ denotes the subclass of blocks of the block-stable class $\cC$. Using~\eqref{eq:chainRule} we obtain
\[
\cR^\bullet \simeq (\Set \circ \cB') \cdot \cB'^\bullet
\]
and the sampler $\Gamma R^\bullet(y)$ is given by independent calls of $\Gamma (\Set \circ \cB')(y)$ and $\Gamma \cB'^\bullet(y)$. Hence the blocks along the spine are drawn according to $\ell$ independent copies of $\Gamma \cB'^\bullet(y)$.

Equation~(\ref{eq:pureawesomeness}) allows us to relate properties of $\Gamma A_R^{(\ell)}(\rho)$ to properties of a uniformly random chosen enriched tree of a given size. We are going to apply the following general lemma in Section~\ref{sec:convergence} in order to show that the blocks along sufficiently long paths in random graphs behave asymptotically like the spine of $\Gamma A_R^{(\ell)}(\rho)$ for a corresponding $\ell$.
\begin{lemma}
\label{le:bipenr}
Let $\cE$ be a property of pointed $\cR$-enriched trees (i.e.\ a subset of $\cA_\cR^\bullet$) and let $n \in \ndN$ be such that $\cA_\cR[n]$ is nonempty. Consider the function
\[
f: \cA_\cR[n] \to \ndR, \, \, A \mapsto \sum_{v \in [n]} \one_{(A,v) \in \cE}
\]
counting the number of ``admissible'' outer roots with respect to $\cE$. Let $\mA_n \in \cA_\cR[n]$ be drawn uniformly at random. Then
\[
\Ex{f(\mA_n)}= \Pr{|\Gamma A_R(\rho)| = n}^{-1} \sum_{\ell=0}^{n-1} \left(\rho R'(y)\right)^{\ell} \Pr{\Gamma A_R^{(\ell)}(\rho) \text{ has size $n$ and satisfies $\cE$}}.
\]
\end{lemma}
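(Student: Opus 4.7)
The plan is to unfold the expectation and then stratify by the height of the distinguished vertex, exploiting the identity $\cA_\cR^\bullet \simeq \bigsqcup_{\ell\ge 0}\cA_\cR^{(\ell)}$ obtained just before~(\ref{eq:pureawesomeness}); once the counts are indexed by $\ell$, each can be traded for a Boltzmann probability via~(\ref{eq:pureawesomeness}).

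First I would write, using the definition of $f$ and the uniform distribution of $\mA_n$,
\[
\Ex{f(\mA_n)} \;=\; \frac{1}{|\cA_\cR[n]|}\sum_{A\in\cA_\cR[n]}\sum_{v\in[n]} \one_{(A,v)\in\cE} \;=\; \frac{1}{|\cA_\cR[n]|}\,\bigl|\{(A,v)\in\cA_\cR^\bullet[n]:(A,v)\in\cE\}\bigr|.
\]
Since a pointed enriched tree of total size $n$ has its distinguished vertex at some height $\ell\in\{0,\dots,n-1\}$, the direct-sum decomposition above partitions $\cA_\cR^\bullet[n]$ into the sets $\cA_\cR^{(\ell)}[n]$, and the cardinality on the right splits accordingly into the sum $\sum_{\ell=0}^{n-1}|\cA_\cR^{(\ell)}[n]\cap\cE|$.

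Then for each fixed $\ell$ I would pass from counts to probabilities. By definition of the Boltzmann sampler, every $(A,v)\in\cA_\cR^{(\ell)}[n]$ carries mass $\rhoc^n/(A_\cR^{(\ell)}(\rhoc)\,n!)$ under $\Gamma A_\cR^{(\ell)}(\rhoc)$. Summing this mass over those $(A,v)$ which additionally satisfy~$\cE$ and solving for the cardinality yields
\[
|\cA_\cR^{(\ell)}[n]\cap\cE| \;=\; \frac{A_\cR^{(\ell)}(\rhoc)\,n!}{\rhoc^n}\,\Pr{\Gamma A_\cR^{(\ell)}(\rhoc)\text{ has size $n$ and satisfies $\cE$}}.
\]
Evaluating the product decomposition $A_\cR^{(\ell)}(z)=(z\,R'(A_\cR(z)))^\ell A_\cR(z)$ at $z=\rhoc$, together with $A_\cR(\rhoc)=y$, gives $A_\cR^{(\ell)}(\rhoc) = (\rhoc R'(y))^\ell\,y$, which produces the factor $(\rhoc R'(y))^\ell$ in the target formula.

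To finish I would combine these ingredients with the analogous Boltzmann identity $|\cA_\cR[n]| = y\,n!\,\Pr{|\Gamma A_\cR(\rhoc)|=n}/\rhoc^n$ in order to rewrite the prefactor. The common factor $y\,n!/\rhoc^n$ then cancels, and what remains is precisely the claimed identity. There is no serious obstacle here: the argument is bookkeeping built on the single-object identity~(\ref{eq:pureawesomeness}), which is essentially the $|\cE|=1$ instance of the lemma. The only point requiring care is the evaluation $A_\cR^{(\ell)}(\rhoc) = (\rhoc R'(y))^\ell y$, which one reads off from the product decomposition of $\cA_\cR^{(\ell)}$ and the substitution rule for generating series; after that, the summation over~$\ell$ just reassembles the desired right-hand side.
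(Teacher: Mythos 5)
Your proposal is correct and follows essentially the same route as the paper: both stratify the pointed objects $(A,v)$ by the height $\ell$ of the outer root and then trade uniform/counting weights for Boltzmann probabilities of $\Gamma A_\cR^{(\ell)}(\rhoc)$, the factor $(\rhoc R'(y))^\ell$ arising from $A_\cR^{(\ell)}(\rhoc) = (\rhoc R'(y))^\ell\, y$. The only cosmetic difference is that the paper invokes the per-object identity~\eqref{eq:pureawesomeness} directly, whereas you rederive it at the level of cardinalities and generating-function evaluations.
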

\begin{proof}
First, observe that 
\[
\sum_{v=1}^n \Pr{ (\mA_n,v) \in \cE} = \sum_{\ell=0}^{n-1} \sum_{(A,r) \in \cE \cap \cA_\cR^{(\ell)}[n]} \Pr{\mA_n = A}.
\]
By (\ref{eq:pureawesomeness}) we have for all $(A,r) \in \cE \cap \cA_\cR^{(\ell)}[n]$ that
\[
\Pr{\Gamma A_R(\rho) = A \mid |\Gamma A_R(\rho)| = n} = \left(\rho R'(y)\right)^{\ell}  \Pr{\Gamma A_R^{(\ell)}(\rho) = (A,r)} \Pr{|\Gamma A_R(\rho)| = n}^{-1}.
\]
This proves the claim.
\end{proof}

\section{Convergence Towards the CRT}
\label{sec:convergence}
Let $\cC$ be an analytic block-stable class of connected graphs and $\cB \ne 0$ its subclass of all graphs that are 2-connected or a single edge. We let $\rhoc>0$ denote the radius of convergence of the exponential generating series $C(z)$ and set $y = C^\bullet(\rhoc)$. As before we identify $\cC^\bullet$ with the class $\cA_\cR$ of $\cR$-enriched trees with $\cR = \Set \circ \cB'$.
By Proposition~\ref{pro:coupling} we know that if we draw an $\cR$-enriched tree $(\mT, \alpha)$ according to the Boltzmann distribution with parameter $\rhoc$, then $\mT$ is distributed like a $\xi$-Galton-Watson tree with $\xi := |\Gamma(\Set \circ \cB')(y)|$, relabeling uniformly at random and discarding the ordering on the offspring sets.

Throughout this section let $n \equiv 1 \mod \spa(\xi)$ denote a large enough integer such that the probability of a $\xi$-GWT having size $n$ is positive. Let $\mC_n \in \cC_n$ be drawn uniformly at random and generate $\mC_n^\bullet \in \cC^\bullet_n$ by uniformly choosing a root from $[n]$. We let $(\mT_n, \alpha_n)$ be the corresponding enriched tree.

For any pointed derived block $B \in \cB'^\bullet$ we let $\shp(B) := d_B(*, \text{root})$ denote the length of a shortest path connecting the $*$-vertex with the root.
In this section we prove our main result:
\begin{theorem}
\label{te:maintheorem}
Let $\cC$ be a subcritical class of connected graphs. Then 
\[
\frac{\sigma}{2 \sca \sqrt{n}} \mC_n^\bullet \convdislong \CRT \quad \text{and}\quad \frac{\sigma}{2 \sca \sqrt{n}} \mC_n \convdislong \CRT
\]
with respect to the (pointed) Gromov-Hausdorff metric. The constants are given by  $\sigma^2 = \Ex{|\mB|}$ and $\sca = \Ex{|\shp(\mB)|}$ with $\mB \in \cB'^\bullet$ a random block drawn according to the Boltzmann distribution with parameter $y=\cC^\bullet(\rho)$, and in particular $\sigma^2 = 1 + B'''(y) y^2$.
\end{theorem}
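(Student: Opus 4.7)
The plan has three ingredients. First, Corollary~\ref{co:encor} and Proposition~\ref{pro:coupling} identify $\mC_n^\bullet$ with a pair $(\mT_n, \alpha_n)$ where $\mT_n$ is a critical $\xi$-Galton-Watson tree conditioned on size $n$. Second, Theorem~\ref{te:gwtconv} gives $\tfrac{\sigma}{2\sqrt n}\mT_n \convdislong \CRT$. Third, I will show that graph distances in $\mC_n^\bullet$ are, up to the universal factor $\sca$, uniformly close to tree distances in $\mT_n$. Since both metric spaces live on $[n]$ and share the same root, the identity map is a valid correspondence, and Proposition~\ref{pro:distortion} gives
\[
2 d_{\text{GH}}\Bigl(\bigl([n], \tfrac{\sigma}{2\sqrt n} d_{\mT_n}\bigr), \bigl([n], \tfrac{\sigma}{2\sca\sqrt n} d_{\mC_n^\bullet}\bigr)\Bigr) \;\le\; \frac{\sigma}{2\sca\sqrt n} \sup_{u,v \in [n]} \bigl|d_{\mC_n^\bullet}(u,v) - \sca\, d_{\mT_n}(u,v)\bigr|.
\]
It thus suffices to prove that the supremum on the right is $o(\sqrt n)$ in probability; the conclusion then follows from Theorem~\ref{te:gwtconv} and the triangle inequality for $d_{\text{GH}}$, and the unpointed statement follows by forgetting the root.

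To decompose graph distances, observe that each non-root vertex $w$ of $\mT_n$ sits in a unique pointed derived block $B_w \in \cB'^\bullet$ in which its parent plays the role of the $*$-vertex. If the tree path from $u$ to $v$ reads $u = w_0, w_1, \ldots, w_k = v$ with LCA $w_j$, then cutvertex considerations give
\[
d_{\mC_n^\bullet}(u,v) \;=\; \sum_{i \ne j} \shp(B_{w_i}) \;-\; \delta, \qquad \delta \in \bigl[0,\, 2 \max_w \shp(B_w)\bigr],
\]
where $\delta$ accounts for a potential shortcut when $w_{j-1}$ and $w_{j+1}$ happen to lie in a common block at $w_j$. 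Subcriticality of $\cC$, equivalently $y < \rhob$ by Lemma~\ref{le:clarifysubc}, ensures that the Boltzmann block $\Gamma \cB'^\bullet(y)$ has all exponential moments; in particular $\shp(\mB)$ has exponential moments and finite positive mean $\sca$.

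Fix $\epsilon>0$ and define the bad property
\[
\cE_\epsilon \;=\; \Bigl\{(A,r) \in \cA_\cR^\bullet : \text{spine length } \ell \ge 1 \text{ and } \Bigl|\sum_{i=1}^\ell \shp(B_i) - \sca\, \ell\Bigr| > \epsilon \sqrt n \Bigr\}.
\]
Under the size-biased sampler $\Gamma \cA_\cR^{(\ell)}(\rhoc)$ from Section~\ref{sec:pointedenr}, the $\ell$ spine blocks are i.i.d.\ copies of $\Gamma \cB'^\bullet(y)$, so a standard Chernoff bound (in the spirit of Lemma~\ref{le:deviation}) gives, for an absolute $c>0$, $\Pr{\Gamma \cA_\cR^{(\ell)}(\rhoc) \in \cE_\epsilon} \le 2 \exp(-c \epsilon^2 n/\ell)$. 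Subcriticality moreover forces $\rhoc R'(y) = y B''(y) = B'^\bullet(y) = 1$ (Lemma~\ref{le:clarifysubc} combined with $y = \rhoc e^{\lambdac}$), so Lemma~\ref{le:bipenr} together with Corollary~\ref{co:sizepoly} yields
\[
\Ex{|\{r \in [n] : (\mA_n, r) \in \cE_\epsilon\}|} \;\le\; O(n^{3/2}) \sum_{\ell=1}^{n-1} 2 \exp(-c \epsilon^2 n/\ell).
\]
Using the high-probability diameter bound $\Di(\mT_n) \le C \sqrt{n \log n}$ implied by~\eqref{eq:gwttail}, the contribution from $\ell > C\sqrt{n \log n}$ is negligible, while for smaller $\ell$ the exponent $c \epsilon^2 n/\ell \ge c \epsilon^2 \sqrt{n/\log n}$ easily dominates the $n^{3/2}$ prefactor. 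Markov's inequality then ensures that with high probability no (root, $v$)-pair violates the concentration; arbitrary pairs $(u,v)$ reduce to this by splitting at the LCA and applying the same estimate to each half, while the LCA shortcut $\delta = O(\max \shp) = O(\log n) = o(\sqrt n)$ is absorbed trivially using the exponential tails of $\shp(\mB)$.

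The main obstacle is the step $\Pr{\text{size}=n,\, \cE_\epsilon} \le \Pr{\cE_\epsilon}$ implicitly invoked above: the total size of $\Gamma \cA_\cR^{(\ell)}(\rhoc)$ is not independent of the spine blocks (larger spine blocks bias the total size upward), so this bound is potentially lossy. In the present setting the issue is handled by the superpolynomial decay $\exp(-c \epsilon^2 n/\ell)$, which has ample room to absorb any polynomial loss; a cleaner argument conditions on the spine blocks and uses that the hanging subtrees are then i.i.d.\ $\Gamma \cA_\cR(\rhoc)$-samples whose total size has the distribution analyzed in Corollary~\ref{co:sizepoly}.
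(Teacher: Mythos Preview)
Your proposal is correct and follows the same overall strategy as the paper: couple $\mC_n^\bullet$ with the conditioned Galton--Watson tree $\mT_n$ via enriched trees, use the size-biased construction of Section~\ref{sec:pointedenr} together with Lemma~\ref{le:bipenr} to show that graph distances equal $\sca$ times tree distances up to $o(\sqrt n)$ uniformly, and conclude via Proposition~\ref{pro:distortion} and Theorem~\ref{te:gwtconv}.

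There are two bookkeeping differences worth noting. First, the paper handles \emph{all} ordered pairs $(x,y)$ at once by also summing over the choice of root, picking up an extra factor $n$ (hence the $O(n^{5/2})$ prefactor in the proof of Lemma~\ref{le:expansion}); you instead fix the root, control root-to-vertex distances, and recover general pairs via the LCA identity $d(u,v)=d(u,\mathrm{root})+d(v,\mathrm{root})-2d(w,\mathrm{root})-\delta$. Both routes work and cost the same. Second, the paper uses the $\ell$-dependent deviation threshold $\ell^{1/2+\epsilon}$ (so that Lemma~\ref{le:deviation} applies verbatim) and then treats short paths separately via the largest-block bound (Proposition~\ref{pro:shortpaths} and Corollary~\ref{cor:short}); your fixed threshold $\epsilon\sqrt n$ merges both regimes into a single Chernoff step, which is slightly slicker.

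Two minor points to tighten. The bound $\exp(-c\epsilon^2 n/\ell)$ is the sub-Gaussian branch of a Bernstein-type inequality and is not literally valid for very small $\ell$; there the sub-exponential branch $\exp(-c'\epsilon\sqrt n)$ takes over. Either branch is superpolynomially small for $\ell\le C\sqrt{n\log n}$, so your conclusion is unaffected, but you should state the correct two-sided bound rather than invoke Lemma~\ref{le:deviation}. And your ``main obstacle'' paragraph is unnecessary worry: $\Pr{|\Gamma A_\cR^{(\ell)}|=n,\ \cE_\epsilon}\le\Pr{\cE_\epsilon}$ is just monotonicity of probability and needs no further discussion.
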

As a consequence we obtain the limit distributions for the height and diameter of $\mC_n^\bullet$.
\begin{corollary}
\label{cor:hddistr}
Let $\cC$ be a subcritical class of connected graphs. Then the rescaled height $\frac{\sigma}{2 \sca \sqrt{n}} \He(\mC_n^\bullet)$ and diameter $\frac{\sigma}{2 \sca \sqrt{n}} \Di(\mC_n)$ converge in distribution to $\He(\CRT)$ and $\Di(\CRT)$, i.e.\ for all $x>0$, as $n$ tends to infinity
\begin{align*}
\Pr{\He(\mC_n^\bullet) > \frac{2 \sca \sqrt{n}}{\sigma}x} &\to 2 \sum_{k=1}^\infty (4k^2x^2-1)\exp(-2k^2x^2), \\
\Pr{\Di(\mC_n) > \frac{2 \sca \sqrt{n}}{\sigma}x} &\to \sum_{k=1}^\infty (k^2-1)\left(\frac{2}{3}k^4x^4 -4k^2x^2 +2\right)\exp(-k^2x^2/2).
\end{align*}
Moreover, all moments converge as well. In particular
\begin{align*}
\Ex{\Di(\mC_n)} \sim \frac{2^{5/2} \sca}{3\sigma} \sqrt{\pi n} \sim \frac{4}{3} \Ex{\He(\mC_n^\bullet)}.
\end{align*}
Expressions for arbitrarily high moments are given in (\ref{eq:exheight}) and (\ref{eq:exdiam}).
\end{corollary}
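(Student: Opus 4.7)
The plan is to derive Corollary~\ref{cor:hddistr} directly from Theorem~\ref{te:maintheorem} together with the subgaussian tail bounds stated as Theorem~\ref{te:tailbound} in the introduction. First I would observe that the height functional $(E,d,r)\mapsto\sup_{x\in E}d(r,x)$ and the diameter functional $(E,d)\mapsto\sup_{x,y\in E}d(x,y)$ are $2$-Lipschitz continuous with respect to the (pointed) Gromov--Hausdorff metric. Indeed, given a correspondence $R$ between $(E_1,d_1,r_1)$ and $(E_2,d_2,r_2)$ with $(r_1,r_2)\in R$, every $x_1\in E_1$ has some partner $x_2\in E_2$ with $|d_1(r_1,x_1)-d_2(r_2,x_2)|\le\disto(R)$, and taking suprema yields $|\He(E_1)-\He(E_2)|\le\disto(R)$; the analogous argument for pairs gives $|\Di(E_1)-\Di(E_2)|\le\disto(R)$. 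In view of Proposition~\ref{pro:distortion} these quantities are thus bounded by $2d_{\text{GH}}$, so the continuous mapping theorem applied to Theorem~\ref{te:maintheorem} immediately delivers
\[
    \frac{\sigma}{2\sca\sqrt{n}}\He(\mC_n^\bullet)\convdislong\He(\CRT)\qquad\text{and}\qquad\frac{\sigma}{2\sca\sqrt{n}}\Di(\mC_n)\convdislong\Di(\CRT).
\]
The explicit tail expressions in the statement are then copied from~\eqref{eq:disheight} and~\eqref{eq:disdiam}.

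Next I would upgrade convergence in distribution to convergence of all moments by establishing uniform integrability. This is where Theorem~\ref{te:tailbound} enters: it provides constants $C,c>0$ with
\[
    \Pr{\He(\mC_n^\bullet)\ge x}\le C\exp(-cx^2/n),\qquad\Pr{\Di(\mC_n)\ge x}\le C\exp(-cx^2/n)
\]
for every $n$ and every $x\ge 0$. A direct integration of these bounds shows that for each fixed $k\ge 1$ the quantities $\Ex{(\He(\mC_n^\bullet)/\sqrt{n})^{k+1}}$ and $\Ex{(\Di(\mC_n)/\sqrt{n})^{k+1}}$ are uniformly bounded in $n$, so the $k$th powers of the rescaled variables form a uniformly integrable family. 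Together with the convergence in distribution obtained above, this yields convergence of all moments.

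Finally, for the first-order asymptotic $\Ex{\Di(\mC_n)}\sim\tfrac{2^{5/2}\sca}{3\sigma}\sqrt{\pi n}$ I would simply multiply the rescaling factor $2\sca\sqrt{n}/\sigma$ by $\Ex{\Di(\CRT)}=\tfrac{4}{3}\sqrt{\pi/2}$ from~\eqref{eq:exdiam}; an analogous computation with $\Ex{\He(\CRT)}=\sqrt{\pi/2}$ from~\eqref{eq:exheight} gives $\Ex{\He(\mC_n^\bullet)}\sim\tfrac{\sca}{\sigma}\sqrt{2\pi n}$, and the ratio $4/3$ falls out. Expressions for arbitrarily high moments follow by the same route from~\eqref{eq:exheight} and~\eqref{eq:exdiam2}. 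The only non-routine inputs are Theorem~\ref{te:maintheorem} and Theorem~\ref{te:tailbound}; the proof of the corollary itself is bookkeeping, and I do not anticipate any real obstacle beyond correctly identifying the scaling constants.
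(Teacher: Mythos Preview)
Your proposal is correct and follows essentially the same approach as the paper: convergence in distribution via Theorem~\ref{te:maintheorem} (the paper leaves the continuous-mapping step implicit, whereas you spell out the Lipschitz continuity of the height and diameter functionals), and moment convergence via the subgaussian tail bounds of Theorem~\ref{te:tailbound} to obtain $L^p$-boundedness/uniform integrability. The only additional remark the paper makes is that the proof of Theorem~\ref{te:tailbound} does not depend on the results of Section~\ref{sec:convergence}, so there is no circularity.
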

\begin{proof}
The limiting distributions are given in (\ref{eq:disheight}) and (\ref{eq:disdiam}). In order to show convergence of the moments we argue that the rescaled height and diameter are bounded in the space $L^{p}$ for all $1 < p < \infty$. This follows for example from the subgaussian tail-bounds of Theorem~\ref{te:tailbound} given in Section \ref{sec:tailbounds} below (note that the proof of Theorem~\ref{te:tailbound} does not depend on the results in this section).
\end{proof}
In the following we are going to prove Theorem~\ref{te:maintheorem}. The idea is to show that the pointed Gromov-Hausdorff distance of $\mC_n^\bullet$ and $\sca \mT_n$ is small with high probability and use the convergence of $\mT_n$ towards a multiple of the CRT $\CRT$.
\begin{definition}
Let $C\in\cC$. For any $x,y \in V(C)$ set $\bar{d}_C(x,y) := d_T(x,y)$ with with $(T,\alpha)$ the enriched tree corresponding to $(C,x)$, i.e.\ $C$ rooted at the vertex $x$.
\end{definition}
Less formally speaking, $\bar{d}_C(x,y)$ denotes the minimum number of blocks required to cover the edges of a shortest path linking $x$ and $y$. It takes a moment to see that if $(T,\alpha)$ corresponds to the rooted graph $(C,z)$, then $\bar{d}_C(x,y) \le d_T(x,y) \le \bar{d}_C(x,y) + 1$ for all $x,y \in V(C)$. In particular, $\bar{d}_C$ is a metric: the triangle inequality holds since $d_T$ is a metric and
\[
\bar{d}_C(x,y) \le d_T(x,y) \le d_T(x,z) + d_T(y,z) = \bar{d}_C(x,z) + \bar{d}_C(y,z).
\] 
In the following lemma we apply the results on pointed enriched trees of Section~\ref{sec:pointedenr}.
\begin{lemma}
\label{le:expansion}
Let $\cC$ be a subcritical class of connected graphs and set $\sca = \Ex{\shp(\Gamma B'^\bullet(y))}$. Then for all $s>1$ and $0 < \epsilon < 1/2$ with $2\epsilon s>1$
we have with high probability that all $x,y \in V(\mC_n)$ with $\bar{d}_{\mC_n}(x,y) \ge \log^s(n)$ satisfy $|d_{\mC_n}(x,y) - \sca \bar{d}_{\mC_n}(x,y)| \le \bar{d}_{\mC_n}(x,y)^{1/2 + \epsilon}$.
\end{lemma}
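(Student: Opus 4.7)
The strategy is to write $d_{\mC_n}(x,y)$ as an \emph{exact} sum of shortest-path lengths across the $\bar{d}_{\mC_n}(x,y)$ many blocks lying on the tree-path between $x$ and $y$, transfer this sum to an i.i.d.\ setting via the size-biased enriched tree and Lemma~\ref{le:bipenr}, and close with the moderate deviation estimate of Lemma~\ref{le:deviation}.

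\emph{Step 1 (Structural identity).} Fix a pair $(x,y)$, let $(T,\alpha)$ be the enriched tree of $(\mC_n, x)$, and write $\ell = \bar{d}_{\mC_n}(x,y) = d_T(x,y)$ with ancestors $x = v_0, v_1, \dots, v_\ell = y$. For each $i$ the pair $(v_{i-1}, v_i)$ lies in a common block, which together with its $*$-vertex $v_{i-1}$ and pointed atom $v_i$ is a $\cB'^\bullet$-structure $B_i^\bullet$. Each $v_i$ with $1 \le i \le \ell-1$ is a cutvertex of $\mC_n$, so any $x$-$y$-path must pass through the $v_i$ in order, giving
\[
d_{\mC_n}(x,y) = \sum_{i=1}^{\ell} \shp(B_i^\bullet).
\]
The lemma therefore reduces to a concentration statement for this sum, uniformly in the pair $(x,y)$.

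\emph{Step 2 (Reduction to the size-biased tree).} Let $\cE \subset \cA_\cR^\bullet$ be the set of $(A, v)$ such that $v$ has depth $\ell \ge \lceil \log^s(n) \rceil$ in $A$ and $\big|\sum_{i=1}^{\ell} \shp(B_i^\bullet) - \sca \ell\big| > \ell^{1/2+\epsilon}$ for the spine blocks $B_i^\bullet$, and set $f(A) = \sum_{v \in [n]} \one_{(A,v) \in \cE}$. Summing $f((\mC_n, x))$ over $x \in [n]$ pointwise recovers the number $N(\mC_n)$ of ordered bad pairs in $\mC_n$, and relabeling symmetry yields $\Ex{N(\mC_n)} = n\, \Ex{f(\mA_n)}$ with $\mA_n$ uniform on $\cA_\cR[n]$. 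By Markov it thus suffices to prove $n\, \Ex{f(\mA_n)} = o(1)$. Subcriticality forces $\rhoc R'(y) = y B''(y) = B'^\bullet(y) = 1$ (Lemma~\ref{le:clarifysubc}), while Corollary~\ref{co:sizepoly} gives $\Pr{|\Gamma A_R(\rhoc)| = n}^{-1} = O(n^{3/2})$, so Lemma~\ref{le:bipenr} produces
\[
\Ex{f(\mA_n)} \le O(n^{3/2}) \sum_{\ell \ge \lceil \log^s(n) \rceil} \Pr{\Gamma A_R^{(\ell)}(\rhoc) \in \cE}.
\]
In the size-biased tree the spine blocks $B_1^\bullet, \dots, B_\ell^\bullet$ are i.i.d.\ copies of $\Gamma B'^\bullet(y)$. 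Strict subcriticality $y < \rhob$ (Lemma~\ref{le:clarifysubc}) together with a coefficient bound shows that $|\Gamma B'^\bullet(y)|$ has exponentially decaying tails, and so does $\shp(\Gamma B'^\bullet(y)) \le |\Gamma B'^\bullet(y)|$. Applying Lemma~\ref{le:deviation} to the centered i.i.d.\ variables $X_i = \shp(B_i^\bullet) - \sca$ with $p = 1/2 + \epsilon$ yields, for some $c > 0$ and all sufficiently large $\ell$,
\[
\Pr{\Gamma A_R^{(\ell)}(\rhoc) \in \cE} \le 2\exp(-c\, \ell^{2\epsilon}).
\]

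\emph{Step 3 (Closing the bound).} Combining the two displays and using a standard geometric-type tail estimate $\sum_{\ell \ge N} \exp(-c\ell^{2\epsilon}) \le O(\exp(-c' N^{2\epsilon}))$ at $N = \lceil \log^s(n) \rceil$ gives
\[
n\, \Ex{f(\mA_n)} \le O\!\left( n^{5/2} \exp\!\big(-c'(\log n)^{2s\epsilon}\big) \right).
\]
The hypothesis $2\epsilon s > 1$ makes $(\log n)^{2s\epsilon}/\log n \to \infty$, so the right-hand side is $o(1)$ and Markov concludes. The key technical point is the twofold use of strict subcriticality $y < \rhob$: first to secure $B'^\bullet(y) = 1$, neutralising the otherwise potentially divergent factor $(\rhoc R'(y))^\ell$ in Lemma~\ref{le:bipenr}, and second to secure exponential moments for $\shp(\Gamma B'^\bullet(y))$ so that Lemma~\ref{le:deviation} is applicable; both steps would break down in the critical regime $y = \rhob$.
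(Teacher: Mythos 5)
Your proposal is correct and follows essentially the same route as the paper: a first-moment bound over ordered pairs, transfer to the size-biased enriched tree via Lemma~\ref{le:bipenr} with $\rhoc R'(y)=1$ and the $\Theta(n^{-3/2})$ size estimate, i.i.d.\ spine blocks, and the moderate deviation bound of Lemma~\ref{le:deviation}. The only difference is cosmetic: you make explicit the cutvertex identity $d_{\mC_n}(x,y)=\sum_i \shp(B_i^\bullet)$, which the paper uses implicitly, and you sum the stretched-exponential tail instead of bounding it by $n$ times its largest term.
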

\begin{proof}
We denote $L_n = \log^s(n)$ and $t_\ell = \ell^{1/2 + \epsilon}$. Let $\cE \subset \cA_{\cR}^\bullet \simeq \cC^{\bullet \bullet}$ with $\cR = \Set \circ \cB'$ denote the set of all bipointed graphs or pointed enriched trees $((C,x),y) \simeq ((T, \alpha), y)$, where we call $x$ the {\em inner root} and $y$ the {\em outer root}, such that 
\[
d_T(x,y) \ge L_{|T|} \quad \text{and} \quad |d_C(x,y) - \sca d_T(x,y)| > t_{d_T(x,y)}.
\]
We will bound the probability that there exist vertices $x$ and $y$ with $((\mC_n,x),y) \in \cE$. First observe that
\[
\sum_{x,y \in [n]} \Pr{((\mC_n, x),y) \in \cE} = \sum_{((C,x),y) \in \cE} \Pr{ \mC_n = C} =  n \sum_{y=1}^n \Pr{(\mC_n^\bullet,y) \in \cE}.
\]
By assumption we may apply Corollary~\ref{co:sizepoly} to obtain $\Pr{|\Gamma \cC^\bullet(\rho)| = n} = \Theta(n^{-3/2})$. Moreover, Lemma~\ref{le:clarifysubc} asserts that $B'^\bullet(y)=1$ and thus, with Lemma~\ref{le:cfinite}
\[
	\rhoc R'(y)
	= \rhoc B''(y) e^{B'(y)}
	= y B''(y)
	=1.
\]
Hence, by applying Lemma~\ref{le:bipenr} we obtain that
\begin{align*}
\label{eq:proofmainlm}
\Pr{((\mC_n,x),y) \in \cE \text{ for some $x, y$}} \le O(n^{5/2}) \sum_{\ell=L_n}^{n-1} \Pr{\Gamma A_R^{(\ell)}(\rhoc) \text{ has size $n$ and satisfies $\cE$}}.
\end{align*}
The height of the outer root in the bipointed graph corresponding to $\Gamma A_R^{(\ell)}(\rhoc)$ is distributed like the sum of $\ell$ independent random variables, each distributed like the distance of the $*$-vertex and the root in the corresponding derived block of $\Gamma (\Set \circ \cB')^\bullet(y)$. Since $(\Set \circ \cB')^\bullet \simeq (\Set \circ \cB')\cdot \cB'^\bullet$, these variables are actually $\shp(\Gamma B'^\bullet(y))$-distributed. Hence \begin{align*}
\Pr{\Gamma A_R^{(\ell)}(\rhoc) \in \cE, |\Gamma A_R^{(\ell)}(\rhoc)|=n} &\le \Pr{|\eta_1 + \ldots + \eta_\ell - \ell \Ex{\eta_1}| > t_\ell}
\end{align*} with $(\eta_i)_i$ i.i.d.\ copies of $\eta := \shp(\Gamma B'^\bullet(y))$. Clearly we have that $\eta \le |\Gamma B'^\bullet(y)|$. Since  $\cC$ is subcritical it follows that there is a constant $\delta>0$ such that $\Ex{e^{\eta \theta}}< \infty$ for all $\theta$ with $|\theta| \le \delta$. Hence we may apply the standard moderate deviation inequality for one-dimensional random walk stated in Lemma \ref{le:deviation} to obtain for some constant $c>0$
\[
\Pr{((\mC_n,x),y) \in \cE \text{ for some $x, y$}} \le O(n^{7/2}) \exp( - c (\log n)^{2s\epsilon})= o(1).
\]
\end{proof}
It remains to clarify what happens if $\bar{d}_{\mC_n}$ is small. We prove the following statement for random graphs from block-stable classes that are not necessarily subcritical. 
\begin{proposition}
\label{pro:shortpaths}
Let $\cC$ be a block-stable class of connected graphs. Suppose that $B'^\bullet(y)=1$ and the offspring distribution $\xi$ has finite second moment, i.e.\ $B'''(y) < \infty$. Let $\lhb(\mC_n)$ denote the size of the largest block in $\mC_n$,
\begin{enumerate}
	\item For any $x,y \in \mC_n$ we have $d_{\mC_n}(x,y) \le \bar{d}_{\mC_n}(x,y) \lhb(\mC_n)$.
	\item If the offspring distribution $\xi$ is bounded, then so is $\lhb(\mC_n)$. Otherwise, for any sequence $K_n$ we have $\Pr{\lhb(\mC_n) \ge K_n} = O(n) \Pr{\xi \ge K_n}$.
\end{enumerate}
\end{proposition}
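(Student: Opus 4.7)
Part (1) follows directly from the block-tree interpretation of Corollary~\ref{co:encor}. Setting $k := \bar{d}_{\mC_n}(x,y)$, the enriched tree of $(\mC_n, x)$ contains a downward path $x = v_0, v_1, \ldots, v_k = y$ whose consecutive pairs $v_{i-1}, v_i$ both lie in a common block $B_i$ of $\mC_n$. Replacing each jump $v_{i-1} \to v_i$ by an arbitrary shortest path inside $B_i$ produces a walk from $x$ to $y$ in $\mC_n$ of total length
\[
\sum_{i=1}^k d_{B_i}(v_{i-1}, v_i) \le \sum_{i=1}^k (|B_i| - 1) \le k \cdot \lhb(\mC_n),
\]
giving the desired inequality.

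For Part (2), the coupling of Proposition~\ref{pro:coupling} realises $\mC_n$ from a $\xi$-Galton--Watson tree $\mT_n$ conditioned on size $n$, and Corollary~\ref{co:encor} partitions the offspring of every vertex $v$ into derived blocks. Every block of $\mC_n$ is therefore rooted at a unique vertex $v$ and has size at most $1 + d^+_{\mT_n}(v)$, so
\[
\lhb(\mC_n) \le 1 + \max_{v \in V(\mT_n)} d^+_{\mT_n}(v).
\]
If $\xi \le M$ almost surely, this deterministically yields $\lhb(\mC_n) \le M + 1$.

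For the unbounded case it then suffices to prove $\Pr{\max_v d^+_{\mT_n}(v) \ge K} = O(n)\, \Pr{\xi \ge K}$ uniformly in $K$. Let $(\xi_i)_{i \ge 1}$ be i.i.d.\ copies of $\xi$ and set $S_n := \sum_{i=1}^n \xi_i$. By the cycle lemma (underlying Lemma~\ref{le:cycap}), the multiset of outdegrees of the plane--ordered GW tree behind $\mT_n$ is distributed as $(\xi_1, \ldots, \xi_n)$ conditioned on $S_n = n-1$; since the max is invariant under reordering, the same holds for the labelled unordered $\mT_n$. A union bound over positions and symmetry then give
\[
\Pr{\max_v d^+_{\mT_n}(v) \ge K} \le n\, \Pr{\xi_1 \ge K \mid S_n = n-1} = n \sum_{k \ge K} \Pr{\xi = k}\, \frac{\Pr{S_n - \xi_1 = n-1-k}}{\Pr{S_n = n-1}}.
\]
By Lemma~\ref{le:clarifysubc} together with the hypothesis $B'''(y) < \infty$, the offspring $\xi$ is critical with finite variance, so the classical local limit theorem for sums of i.i.d.\ lattice variables gives $\Pr{S_n - \xi_1 = m} = O(n^{-1/2})$ uniformly in $m$, while Lemma~\ref{le:cycap} yields $\Pr{S_n = n-1} = \Theta(n^{-1/2})$. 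The ratio is thus $O(1)$ uniformly in $k$, and summation produces the desired $O(n)\, \Pr{\xi \ge K}$. The shift by $1$ between $\lhb(\mC_n)$ and $\max_v d^+_{\mT_n}(v)$ is harmless: applying the bound at $K_n - 1$ instead of $K_n$ only changes the constants.

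The main obstacle is precisely the uniform bound on the ratio of local probabilities; this is where the finite-variance hypothesis is essential, and where the local limit theorem does the work. The remainder of the argument is a routine first-moment/symmetry estimate.
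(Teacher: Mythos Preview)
Your argument matches the paper's: bound $d_{\mC_n}$ via the block path, observe $\lhb(\mC_n)\le\Delta(\mT_n)+1$, then control the maximum outdegree of the conditioned Galton--Watson tree. The only difference is that the paper cites Janson's survey \cite[Eq.~(19.20)]{MR2908619} for $\Pr{\Delta(\mT_n)\ge K}\le(1+o(1))\,n\,\Pr{\xi\ge K}$, whereas you supply a self-contained derivation via the cycle lemma and the local limit theorem; that is precisely the standard route to this inequality, so the approaches coincide.

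One small caveat: your closing remark that passing from $K_n$ to $K_n-1$ ``only changes the constants'' is not literally justified, since $\Pr{\xi\ge K_n-1}/\Pr{\xi\ge K_n}$ need not be uniformly bounded for arbitrary $\xi$ and $K_n$. The paper's own proof has the identical loose end---it establishes the tail bound for $\Delta(\mT_n)$ and leaves the shift to $\lhb(\mC_n)$ implicit. For every use made of the proposition (in particular the $O(\log n)$ bound on $\lhb(\mC_n)$ in the subcritical case), this is immaterial.
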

\begin{proof}
We have that $d_{\mC_n} \le \bar{d}_{\mC_n}(\lhb(\mC_n)-1)$ and $\lhb(\mC_n) = \lhb(\mC_n^\bullet) \le \Delta(\mT_n)+1$ with $\Delta(\mT_n)$ denoting the largest outdegree.
Recall that $\Delta(\mT_n)$ is distributed like the maximum degree of a $\xi$-Galton-Watson tree conditioned to have $n$ vertices.  By assumption, the offspring distribution $\xi$ has expected value $\Ex{\xi}=B'^\bullet(y)=1$ and finite variance.

If $\xi$ is bounded then so is the largest outdegree of $\mT_n$. Otherwise, as argued in the proof of \cite[Eq.\ (19.20)]{MR2908619}, for any sequence $K_n$ 
\begin{align}
\label{eq:maxdegree}
\Pr{\Delta(\mT_n) \ge K_n} \le (1 + o(1))n\Pr{\xi \ge K_n}.
\end{align}
Applying (\ref{eq:maxdegree}) yields $\Pr{\Delta(\mT_n) \ge K_n} \le (1+o(1)) n \Pr{\xi \ge K_n}$ for any sequence $K_n$.
\end{proof}
Note that if $\cC$ is subcritical then this implies that $\lhb(\mC_n) = O(\log n)$ with high probability: the definition of the Boltzmann model and the fact that $y$ is smaller than the radius of convergence of $B(z)$ guarantee that there is a constant $\beta < 1$ such that
\[
	\Pr{\xi = k}
	= \Pr{|\Gamma(\Set \circ \cB')(y)| = k}
	= O(\beta^k).
\]
Combined with the bounds of Lemma \ref{le:expansion} this yields the following concentration result.
\begin{corollary}
\label{cor:short}
Let $\cC$ be a subcritical class of connected graphs. Then for all $s>1$ and $0 < \epsilon < 1/2$ with $2\epsilon s>1$ we have with high probability that for all vertices $x, y \in V(\mC_n)$ 
\[|d_{\mC_n}(x,y) - \sca \bar{d}_{\mC_n}(x,y)| \le \bar{d}_{\mC_n}(x,y)^{1/2 + \epsilon} + O(\log^{s+1}(n)).\]
\end{corollary}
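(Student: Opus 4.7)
The plan is to decompose the statement into two cases depending on the value of $\bar{d}_{\mC_n}(x,y)$ and then combine bounds that are already in place: the concentration from Lemma~\ref{le:expansion} handles the ``long'' regime, while Proposition~\ref{pro:shortpaths} together with the exponential tail of $\xi$ handles the ``short'' regime.

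First I would introduce the high-probability event $\cE_1$ on which the conclusion of Lemma~\ref{le:expansion} holds, and a second event $\cE_2 := \{\lhb(\mC_n) \le K \log n\}$ for a suitable constant $K$. For $\cE_2$, I would invoke Proposition~\ref{pro:shortpaths}(2). Since $\cC$ is subcritical, $y$ lies strictly inside the disk of convergence of $B(z)$, so one gets a geometric tail
\[
\Pr{\xi = k} = \Pr{|\Gamma(\Set\circ\cB')(y)| = k} = O(\beta^k)
\]
for some $\beta \in (0,1)$, as already remarked immediately before the statement of the corollary. Choosing $K$ large enough so that $n \Pr{\xi \ge K\log n}$ vanishes then gives $\Pr{\cE_2} \to 1$. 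Hence $\Pr{\cE_1 \cap \cE_2} \to 1$ and I will argue on this event.

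On $\cE_1 \cap \cE_2$, fix $x,y \in V(\mC_n)$ and split cases. If $\bar{d}_{\mC_n}(x,y) \ge \log^s(n)$, then $\cE_1$ directly supplies the bound $|d_{\mC_n}(x,y) - \sca\bar{d}_{\mC_n}(x,y)| \le \bar{d}_{\mC_n}(x,y)^{1/2+\epsilon}$, which is stronger than what is claimed. If on the other hand $\bar{d}_{\mC_n}(x,y) < \log^s(n)$, then by Proposition~\ref{pro:shortpaths}(1) and $\cE_2$ we have
\[
d_{\mC_n}(x,y) \le \bar{d}_{\mC_n}(x,y)\,\lhb(\mC_n) \le \log^s(n)\cdot K\log n = O(\log^{s+1} n),
\]
while trivially $\sca\,\bar{d}_{\mC_n}(x,y) \le \sca \log^s(n) = O(\log^s n)$. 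The triangle inequality then yields $|d_{\mC_n}(x,y) - \sca\bar{d}_{\mC_n}(x,y)| = O(\log^{s+1} n)$, which is absorbed by the additive term in the claim.

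I do not expect any real obstacle here: the content of the corollary is a bookkeeping assembly of Lemma~\ref{le:expansion} with the short-range control coming from Proposition~\ref{pro:shortpaths}. The only point worth being careful about is invoking the subcriticality correctly to ensure that $\lhb(\mC_n) = O(\log n)$ with high probability; once the geometric tail of $\xi$ is noted, the union bound over $n$ candidate sizes does the rest.
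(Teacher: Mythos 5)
Your proposal is correct and follows exactly the route the paper intends: the paper leaves the corollary without a formal proof, merely noting that it follows by combining Lemma~\ref{le:expansion} with the observation (via the geometric tail of $\xi$ and Proposition~\ref{pro:shortpaths}) that $\lhb(\mC_n) = O(\log n)$ with high probability. Your case split on $\bar{d}_{\mC_n}(x,y) \ge \log^s(n)$ versus $\bar{d}_{\mC_n}(x,y) < \log^s(n)$ is precisely the intended bookkeeping, and the details are sound.
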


We may now prove the main theorem.

\begin{proof}[Proof of Theorem~\ref{te:maintheorem}]
Recall that $\bar{d}_{\mC_n} \le d_{\mT_n} \le \bar{d}_{\mC_n}+1$. By Corollary~\ref{cor:short}, Proposition~\ref{pro:distortion}, and considering the distortion of the identity map as correspondence between the vertices of $\mT_n$ and $\mC_n^\bullet$, it follows that with high probability \[d_{\text{GH}}(\mC_n^\bullet / (\sca \sqrt{n}), \mT_n / \sqrt{n}) \le \Di(\mT_n)^{3/4} / \sqrt{n} + o(1).\] 
Using the tail bounds (\ref{eq:gwttail}) for the diameter $\Di(\mT_n)$ we obtain that
$
d_{\text{GH}}(\mC_n^\bullet / (\sca \sqrt{n}), \mT_n / \sqrt{n})
$ converges in probability to zero. Recall that the variance of the offspring distribution $\xi$ is given by $\sigma^2 = \Ex{|\Gamma B'^\bullet(y)|}$. By Theorem~\ref{te:gwtconv} we have that
$
\frac{\sigma}{2 \sqrt{n}}\mT_n \convdislong \CRT
$
and thus
$
\frac{\sigma}{2 \sca \sqrt{n}}\mC_n^\bullet \convdislong \CRT.
$
\end{proof}

\section{Subgaussian Tail Bounds for the Height and Diameter}
\label{sec:tailbounds}
In this section we prove subgaussian tail bounds for the height and diameter of the random graphs $\mC_n^\bullet$ and $\mC_n$. Our proof builds on results obtained in~\cite{MR3077536}. Recall that $(\mT_n, \alpha_n)$ denotes the enriched tree corresponding to the graph $\mC_n^\bullet$ and that $\mT_n$ has a natural coupling with a $\xi$-Galton-Watson conditioned on having size $n$, see Proposition~\ref{pro:coupling}. With (slight) abuse of notation we also write $\mT_n$ for the conditioned $\xi$-Galton-Watson tree within this section. We prove the following statement for random graphs from block-stable classes that are not necessarily subcritical. 
\begin{theorem}
\label{te:tailbound}
Let $\cC$ be a block-stable class of connected graphs. Suppose that  $\cC$ satisfies $B'^\bullet(y)=1$ and the offspring distribution $\xi$ has finite variance, i.e.\ $B'''(y)<1$. Then there are $C,c>0$ such that for all $n,x \ge 0$
\[
\Pr{\Di(\mC_n) \ge x} \le C \exp(-cx^2/n) \quad \text{and} \quad \Pr{\He(\mC_n^\bullet) \ge x} \le C \exp(-cx^2/n).
\]
\end{theorem}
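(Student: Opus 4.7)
The plan is to follow the strategy of \cite{MR3077536} adapted to the enriched-tree framework of Section~\ref{sec:pointedenr}. The idea is to upper bound $\He(\mC_n^\bullet)$ by a sum of independent shortest-block-distances along a spine in the size-biased random enriched tree, and to combine a Chernoff-type tail bound for this spine sum with the polynomial decay of the Galton--Watson size probability. The diameter bound will then follow immediately from the deterministic inequality $\Di(\mC_n) \le 2\He(\mC_n^\bullet)$, valid for any choice of root.

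Write $(\mT_n, \alpha_n)$ for the enriched tree coding $\mC_n^\bullet$, $h_v := d_{\mT_n}(\emptyset, v)$ for the tree-height of $v$, and $B_1^v, \dots, B_{h_v}^v \in \cB'^\bullet$ for the pointed blocks along the root-to-$v$ path. Since $d_{\mC_n}(\emptyset, v) \le \sum_{i=1}^{h_v}\shp(B_i^v)$, Markov's inequality yields
\[
	\Pr{\He(\mC_n^\bullet) \ge x} \le \Ex{\#\bigl\{v \in [n] :\, \textstyle\sum_{i=1}^{h_v}\shp(B_i^v) \ge x\bigr\}}.
\]
Applying Lemma~\ref{le:bipenr} with $\cR = \Set \circ \cB'$ and with $\cE$ the set of pointed $\cR$-enriched trees whose spine sum of $\shp$-values is $\ge x$, and using that the criticality identity $\rhoc R'(y) = yB''(y) = 1$ follows from $B'^\bullet(y) = 1$ via Lemma~\ref{le:cfinite}, the right-hand side becomes
\[
	\Pr{|\Gamma\cC^\bullet(\rhoc)|=n}^{-1} \sum_{\ell=0}^{n-1} \Pr{S_\ell \ge x,\, |\Gamma A_R^{(\ell)}(\rhoc)| = n},
\]
where $S_\ell = L_1 + \cdots + L_\ell$ is a sum of i.i.d.\ copies of $L := \shp(\Gamma \cB'^\bullet(y))$, since the pointed blocks along the spine of $\Gamma A_R^{(\ell)}(\rhoc)$ are $\ell$ independent samples from $\Gamma \cB'^\bullet(y)$. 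By Corollary~\ref{co:sizepoly}, the outer normalizing factor is $O(n^{3/2})$.

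I would then bound the two parts of each summand exploiting that the spine sum is independent of the off-spine subtrees in $\Gamma A_R^{(\ell)}(\rhoc)$. The off-spine subtrees form an i.i.d.\ family of unconditioned $\xi$-Galton--Watson trees with finite-variance offspring, so Kemperman's formula (Lemma~\ref{le:cycap}) together with a local limit theorem for the sum of their sizes gives the uniform-in-$\ell$ estimate
\[
	\Pr{|\Gamma A_R^{(\ell)}(\rhoc)| = n} \le C' (\max(1, n-\ell))^{-3/2}.
\]
In the subcritical regime $y < \rhob$ the sampler $\Gamma \cB'^\bullet(y)$ has an exponentially-decaying size distribution, hence so does $L$, so Chernoff's inequality produces constants $c_0 > 0$ and $\sca := \Ex{L}$ such that $\Pr{S_\ell \ge \sca \ell + t} \le 2\exp(-c_0 \min(t^2/\ell,\, t))$ for all $\ell \ge 1$ and $t \ge 0$.

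Finally I would split the $\ell$-sum at $\ell_0 := \lceil x/(2\sca) \rceil$: for $\ell < \ell_0$ the concentration bound applies with $t \ge x/2$ and yields a subgaussian factor $\exp(-c_0 x^2/(4\ell))$; for $\ell \ge \ell_0$ the factor $(\max(1, n-\ell))^{-3/2}$ together with the trivial bound $\Pr{S_\ell \ge x} \le 1$ suffices. A routine elementary estimate shows that the total sum is $O(n^{-3/2}\exp(-cx^2/n))$, which, after cancellation with the $O(n^{3/2})$ normalization, gives the claimed bound $\Pr{\He(\mC_n^\bullet) \ge x} \le C\exp(-cx^2/n)$, and the diameter bound then follows at once from $\Di(\mC_n) \le 2\He(\mC_n^\bullet)$. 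I expect the main technical obstacle to be the uniform-in-$\ell$ size estimate, because spine vertices of the size-biased tree carry an additional pointed block compared with ordinary $\xi$-Galton--Watson vertices, so their offspring distribution is different and the standard local-limit theorem for conditioned GW trees must be complemented by a careful decomposition of the off-spine subtree total.
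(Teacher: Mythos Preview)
Your proposal has two genuine gaps. First, the Bernstein bound $\Pr{S_\ell \ge \sca\ell + t} \le 2\exp(-c_0\min(t^2/\ell, t))$ needs $L = \shp(\Gamma \cB'^\bullet(y))$ to have an exponential moment, and this holds only when $y < \rhob$; Theorem~\ref{te:tailbound} is stated merely under finite variance of $\xi$, so at $y=\rhob$ your spine-sum concentration is unavailable in the generality claimed. Second, and more concretely, the large-$\ell$ part of your split does not close: $\sum_{\ell=\ell_0}^{n-1}(\max(1,n-\ell))^{-3/2}\le \sum_{m\ge 1} m^{-3/2}=O(1)$, so after the $O(n^{3/2})$ normalisation you obtain $O(n^{3/2})$, which is nowhere near $C\exp(-cx^2/n)$ in the relevant range $x\gg\sqrt{n}$. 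The trivial bound $\Pr{S_\ell\ge x}\le 1$ is simply too weak here; by applying Markov's inequality to the full count and only afterwards restricting $\ell$, you have thrown away the information that a spine of length $\ge\ell_0$ is itself a rare event in $\mT_n$.

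The paper proceeds differently and avoids both obstacles. It splits $\{\He(\mC_n^\bullet)\ge h\}$ according to whether the extremal vertex $v$ has $h_{\mT_n}(v)\ge h/2$ or $\le h/2$. The first event is handled directly by the Galton--Watson height tail~\eqref{eq:gwttail}. For the second, the deterministic inequality $h_{\mC_n^\bullet}(v)\le \sum_{u\succ v} d^+_{\mT_n}(u)$ together with the DFS/reverse-DFS queue identity $Q_j^d+Q_k^r = 2 + \sum_{u\succ v} d^+(u) - h_{\mT_n}(v)$ forces $\max(Q_j^d,Q_k^r)\ge h/4$, and the subgaussian queue tail~\eqref{eq:dfs} from \cite{MR3077536} finishes. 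This argument uses only that $\xi$ has finite variance, matching the hypothesis exactly.
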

As $\He(\mT_n) \le \He(\mC_n^\bullet)$, Inequality~\eqref{eq:gwttail} also yields a lower tail bound for the height of $\mC_n^\bullet$.
\begin{corollary}
There are constants $C,c>0$ such that for all $x\ge0$ and $n$
\[
\Pr{\He(\mC_n^\bullet) \le x} \le C \exp(-c(n-2)/x^2).
\]
\end{corollary}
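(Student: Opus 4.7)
The proof will follow immediately from two ingredients already in place: the coupling of $\mT_n$ with a critical $\xi$-Galton--Watson tree conditioned on having $n$ vertices (Proposition~\ref{pro:coupling}), which under the hypotheses inherited from Theorem~\ref{te:tailbound} has finite nonzero variance, together with the lower-tail estimate in~\eqref{eq:gwttail} of Addario-Berry, Devroye and Janson. The only genuine step to carry out is the deterministic monotonicity $\He(\mT_n) \le \He(\mC_n^\bullet)$; the rest is a one-line inclusion of events.

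To establish the monotonicity, let $r$ denote the common root of $\mC_n^\bullet$ and of its associated enriched tree $\mT_n$. For any vertex $v$, the auxiliary quantity $\bar{d}_{\mC_n}$ was defined in Section~\ref{sec:convergence} via the enriched tree obtained by rooting at the first argument, so that $\bar{d}_{\mC_n}(r,v) = d_{\mT_n}(r,v)$. On the other hand, $\bar{d}_{\mC_n}(r,v)$ is by definition the number of blocks on a shortest graph path from $r$ to $v$, and each such block contributes at least one edge to that path, so $\bar{d}_{\mC_n}(r,v) \le d_{\mC_n}(r,v)$. Taking maxima over $v \in V(\mC_n)$ yields $\He(\mT_n) \le \He(\mC_n^\bullet)$ pointwise in the underlying randomness.

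Given this monotonicity, the event $\{\He(\mC_n^\bullet) \le x\}$ is contained in $\{\He(\mT_n) \le x\}$. Applying the lower-tail bound in~\eqref{eq:gwttail}, which is valid because $\mT_n$ is coupled with a critical GW tree of finite nonzero variance, yields
\[
	\Pr{\He(\mC_n^\bullet) \le x}
	\le \Pr{\He(\mT_n) \le x}
	\le C_1 \exp(-c_1 (n-2)/x^2),
\]
which gives the required bound with $C = C_1$ and $c = c_1$. I do not expect a real obstacle: the corollary is included precisely because the upper-tail machinery underlying Theorem~\ref{te:tailbound} is one-sided and does not itself supply a lower-tail estimate, whereas the Galton--Watson lower tail transfers to $\mC_n^\bullet$ essentially for free through the block-tree coupling.
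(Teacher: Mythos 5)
Your proof is correct and follows exactly the paper's route: the paper derives the corollary in one line from the monotonicity $\He(\mT_n) \le \He(\mC_n^\bullet)$ together with the lower-tail bound in~\eqref{eq:gwttail} for the conditioned Galton--Watson tree. Your justification of the monotonicity via $h_{\mT_n}(v) = \bar{d}_{\mC_n}(r,v) \le d_{\mC_n}(r,v)$ is a valid elaboration of what the paper leaves implicit.
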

As a main ingredient in our proof we consider the \emph{lexicographic depth-first-search (DFS)} of the plane tree $\mT_n$ by labeling the vertices in the usual way (as a subtree of the Ulam-Harris tree) by finite sequences of integers and listing them in lexicographic order $v_0, v_1, \ldots, v_{n-1}$. The search keeps a queue of $Q_i^d$ nodes with $Q_0^d=1$ and the recursion $$Q_i^d = Q_{i-1}^d -1 + d_{\mT_n^p}^+(v_{i-1}).$$
The mirror-image of $\mT_n$ is obtain by reversing the ordering on each offspring set and the \emph{reverse} DFS $Q_i^r$ is defined as the DFS of the mirror-image. Then $(Q_i^d)_{0 \le i \le n}$ and $(Q_i^r)_{0 \le i \le n}$ are identically distributed and satisfy the following bound given in \cite[Ineq.\ (4.4)]{MR3077536}:
\begin{align}
\label{eq:dfs}
\Pr{\max_{j} Q_j \ge x} \le C \exp(-c x^2 /n)
\qquad \text{where } Q_j \in \{Q_j^d, Q_j^r\}
\end{align}
with $C, c>0$ denoting some constants that do not depend on $x$ or $n$.

\begin{proof}[Proof of Theorem~\ref{te:tailbound}]
Since $\Di(\mC_n) \le 2 \He(\mC_n^\bullet)$ it suffices to show the bound for the height. Let $h \ge 0$. If $\He(\mC_n^\bullet) \ge h$ then there exists a vertex whose height equals $h$. Consequently, we may estimate $\Pr{\He(\mC_n^\bullet) \ge h} \le \Pr{\cE_1} + \Pr{\cE_2}$ with $\cE_1$ (resp.\ $\cE_2$) denoting the event that there is a vertex $v$ such that $h_{\mC_n^\bullet}(v) = h$ and $h_{\mT_n}(v) \ge h/2$ (resp.\ $h_{\mT_n}(v) \le h/2$).
By the tail bound \eqref{eq:gwttail} for the height of Galton-Watson trees we obtain
\[
\Pr{\cE_1} \le \Pr{\He(\mT_n) \ge h} \le C_2 \exp(-c_2 h^2/(4n))
\]
for some constants $C_2, c_2 > 0$. In order to bound $\Pr{\cE_2}$ suppose that there is a vertex $v$ with height $h_{\mC_n^\bullet}(v) = h$ and $h_{\mT_n}(v) \le h/2$. If $a$ is a vertex of $\mT_n$ and $b$ one of its offspring, then $d_{\mC_n^\bullet}(a,b) \le d_{\mT_n}^+(a)$. Hence 
\[
\sum_{u \succ v} d_{\mT_n}^+(u) \ge h_{\mC_n^\bullet}(v) = h
\]
with the sum index $u$ ranging over all ancestors of $v$.
Consider the lexicographic depth-first-search $(Q_i^d)_i$ and reverse depth-first-search $(Q_i^r)_i$ of $\mT_n^p$. Let $j$ (resp.\ $k$) denote the index corresponding to the vertex $v$ in the lexicographic (resp.\ reverse lexicographic) order. It follows from the definition of the queues that if $\cE_2$ occurs
\[
Q_j^d + Q_k^r = 2 + \sum_{u \succ v} d_{\mT_n^p}^+(u) - h_{\mT_n}(v) \ge h/2
\]
and hence $\max(Q_j^d, Q_k^r) \ge h/4$.  Since $Q_j^d$ and $Q_k^r$ are identically distributed it follows by~\eqref{eq:dfs} that
\begin{align*}
\Pr{\cE_2} &\le \Pr{\max_i(Q_i^d) \ge h/4} + \Pr{\max_i(Q_i^r) \ge h/4} \\
& \le 2 \Pr{\max_i(Q_i^d) \ge h/4} \\
& \le 2 C \exp(-c h^2/(16n)).
\end{align*}
This concludes the proof.
\end{proof}

\section{Extensions}
\label{sec:extensions}
In the following we use the notation from Section \ref{sec:convergence}.
\subsection{First Passage Percolation}
\label{sec:weights}
Let $\omega>0$ be a given random variable such that there is a $\delta > 0$ with $\Ex{e^{\theta \omega}} < \infty$ for all $\theta$ with $|\theta| \le \delta$. For any graph $G$ we may consider the random graph $\hat{G}$ obtained by assigning to each edge $e \in E(G)$ a weight $\omega_e$ that is an independent copy of $\omega$. The $d_{\hat{G}}$-distance  of two vertices $a$ and $b$ is then given by 
\[
d_{\hat{G}}(a,b) = \inf\Big\{ \sum_{e \in E(P)} \omega_e \mid \text{$P$ a path connecting $a$ and $b$ in $G$} \Big\}.
\]
Let $\hat{\sca} := \Ex{\shp(\hat{\mB})}$ with $\mB$ drawn according to the Boltzmann sampler $\Gamma B'^\bullet(y)$ and $\shp(\hat{\mB})$ denoting the $d_{\hat{\mB}}$-distance from the $*$-vertex to the root vertex. 
\begin{theorem}
Let $\cC$ be a subcritical class of connected graphs. We have that 
\[
\frac{\sigma}{2 \hat{\sca} \sqrt{n}} \hat{\mC}_n^\bullet \convdislong \CRT \quad \text{and}\quad \frac{\sigma}{2 \hat{\sca} \sqrt{n}} \hat{\mC}_n \convdislong \CRT
\]
with respect to the (pointed) Gromov-Hausdorff metric.
\end{theorem}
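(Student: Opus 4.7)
The plan is to mirror the proof of Theorem~\ref{te:maintheorem}, with the graph metric $d_{\mC_n}$ replaced by the weighted metric $d_{\hat{\mC}_n}$ and the scaling constant $\sca$ replaced by $\hat{\sca}$. Assigning independent edge weights preserves the block-tree structure, so $\hat{\mC}_n^\bullet$ still corresponds to the same $\cR$-enriched tree $(\mT_n, \alpha_n)$ coupled to a conditioned $\xi$-Galton-Watson tree as in Proposition~\ref{pro:coupling}. The only modification is that the contribution of each block along a shortest $x$-to-$y$ path is now itself a random variable rather than a deterministic one.

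First I would prove a weighted analogue of Lemma~\ref{le:expansion}: for $s>1$ and $\epsilon \in (0,1/2)$ with $2\epsilon s > 1$, with high probability every pair $x,y \in V(\mC_n)$ with $\bar{d}_{\mC_n}(x,y) \geq \log^s(n)$ satisfies $|d_{\hat{\mC}_n}(x,y) - \hat{\sca}\,\bar{d}_{\mC_n}(x,y)| \leq \bar{d}_{\mC_n}(x,y)^{1/2+\epsilon}$. The combinatorial reduction via Lemma~\ref{le:bipenr} goes through verbatim: after independently weighting the edges of the $\ell$ blocks $\Gamma \cB'^\bullet(y)$ along the spine of the size-biased enriched tree $\Gamma A_R^{(\ell)}(\rhoc)$, their weighted $*$-to-root distances form an i.i.d.\ sequence $(\hxi_i)_{i\geq 1}$ with $\hxi_i \eqdist \shp(\hat{\mB})$ and common mean $\hat{\sca}$. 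The key technical point is exponential concentration. Bounding $\shp(\hat{\mB})$ above by the total weight along any fixed unweighted shortest $*$-to-root path yields
\[
\Ex{e^{\theta \shp(\hat{\mB})}} \leq \Ex{\Ex{e^{\theta\omega}}^{|\mB|}},
\]
and subcriticality of $\cC$ makes $|\mB|$ exponentially integrable, while the hypothesis on $\omega$ makes $\Ex{e^{\theta\omega}}$ arbitrarily close to $1$ for $|\theta|$ small; so the right-hand side is finite for $\theta$ in a neighbourhood of the origin, and Lemma~\ref{le:deviation} applies to $\sum_{i \leq \ell}(\hxi_i - \hat{\sca})$ for $\ell \geq \log^s(n)$.

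Second, I need a weighted substitute for Proposition~\ref{pro:shortpaths} to control pairs with $\bar{d}_{\mC_n}(x,y) < \log^s(n)$. The bound $d_{\hat{\mC}_n}(x,y) \leq \bar{d}_{\mC_n}(x,y) \cdot M$, with $M$ the maximum weighted diameter over all blocks of $\mC_n$, is still available. Subcriticality gives $\lhb(\mC_n) = O(\log n)$ with high probability, so each block has at most $O(\log^2 n)$ edges; a union bound over the $O(n)$ edges of $\mC_n$ combined with the exponential moment assumption on $\omega$ shows that every edge weight is $O(\log n)$ with high probability, hence $M = O(\log^3 n)$. Together with the preceding step this yields the weighted analogue of Corollary~\ref{cor:short},
\[
|d_{\hat{\mC}_n}(x,y) - \hat{\sca}\,\bar{d}_{\mC_n}(x,y)| \leq \bar{d}_{\mC_n}(x,y)^{1/2+\epsilon} + O(\log^{s+3}(n)),
\]
uniformly in $x,y$, with high probability.

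Finally, since $\bar{d}_{\mC_n}$ and $d_{\mT_n}$ differ by at most one, the identity on vertices is a correspondence between $\hat{\mC}_n^\bullet/(\hat{\sca}\sqrt{n})$ and $\mT_n/\sqrt{n}$ whose distortion is at most $\Di(\mT_n)^{1/2+\epsilon}/\sqrt{n} + o(1)$. Using the subgaussian upper tail in~\eqref{eq:gwttail}, Proposition~\ref{pro:distortion} shows that the corresponding pointed Gromov-Hausdorff distance tends to $0$ in probability, and Theorem~\ref{te:gwtconv} then gives $\tfrac{\sigma}{2\hat{\sca}\sqrt{n}} \hat{\mC}_n^\bullet \convdislong \CRT$. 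The unpointed version follows from the same uniform-root argument used for Theorem~\ref{te:maintheorem}. The main obstacle is the exponential moment estimate for $\shp(\hat{\mB})$; once it is in hand, the rest of the argument is essentially mechanical reuse of Section~\ref{sec:convergence}.
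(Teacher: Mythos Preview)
Your proposal is correct and follows essentially the same route as the paper: reduce to weighted analogues of Lemma~\ref{le:expansion} and Proposition~\ref{pro:shortpaths} via the size-biased enriched tree machinery of Section~\ref{sec:pointedenr}, then conclude by comparing with $\mT_n$ exactly as in the proof of Theorem~\ref{te:maintheorem}. The paper's own proof is a short sketch that formalises the weighted version of Lemma~\ref{le:bipenr} by coupling with an independently weighted complete graph $\hat{K}_n$ and working in the product space $\cC^{\bullet\bullet}\times\ndR^{\cup_n E(K_n)}$; you instead verify directly the two ingredients the paper leaves implicit, namely the exponential moment bound for $\shp(\hat{\mB})$ and the polylogarithmic control of weighted block diameters, both of which are handled correctly.
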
 
\begin{proof}
For any $n$ let $K_n$ denote the complete graph with $n$ vertices. The idea is to generate $\hat{\mC}_n$ by drawing $\mC_n$ and $\hat{K}_n$ independently and assign the weights via the inclusion $E(\mC_n) \subset E(K_n)$. By considering subsets $\cE \subset \cC^{\bullet \bullet} \times \ndR^{\cup_nE(K_n)}$ we may easily prove a weighted version of Lemma~\ref{le:bipenr}, i.e. the probability that the random pair $(\mC_n^{\bullet \bullet}, \hat{K}_n)$ has some property $\cE$ is bounded by
$$O(n^{5/2}) \sum_{\ell=0}^{n-1} \Pr{|\Gamma C^{\bullet (\ell)}(\rho)|=n, (\Gamma C^{\bullet (\ell)}(\rho), \hat{K}_n) \in \cE}.$$ 
This implies that the blocks along sufficiently long paths in the random graphs $\hat{\mC}_n$ behave like independent copies of the weighted block $\hat{\mB}$ with $\mB$ drawn according to the Boltzmann sampler $\Gamma B'^\bullet(y)$. Hence, weighted versions of Lemma~\ref{le:expansion} and Proposition~\ref{pro:shortpaths} may be deduced analogously to their original proofs with $\hat{\kappa}$ replacing $\kappa$ and only minor modifications otherwise. Thus the scaling limit follows in the same fashion. 
\end{proof}

\subsection{Random Graphs given by Their Connected Components}
\label{sec:disconnected}

We study the case of an arbitrary graph consisting of a set of connected components. Let $\cG \simeq \Set \circ \cC$ denote a subcritical graph class given by its subclass $\cC$ of connected graphs. For simplicity we are going to assume that all trees belong to the class $\cC$. 

Consider the uniform random graph $\mG_n \in \cG_n$. Of course we cannot expect $\mG_n$ to converge to the Continuum Random Tree since may be disconnected with a probability that is bounded away from zero. Instead we study a uniformly chosen component $\mH_n$ of maximal size. We are going to show that $\mH_n / \sqrt{n}$ converges to a multiple of the CRT.
\begin{theorem}
\label{te:compconv}
Suppose  $\cC$ is subcritical class of connected graphs containing all trees. Then 
$
\frac{\sigma}{2 \sca \sqrt{n}} \mH_n \convdislong \CRT
$
with respect to the Gromov-Hausdorff metric, where $\sigma, \kappa$ are as in Theorem~\ref{te:maintheorem}.
\end{theorem}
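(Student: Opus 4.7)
The plan is to reduce this theorem to Theorem~\ref{te:maintheorem} by conditioning on the size of the maximal component. The crucial combinatorial observation is that since $\cG \simeq \Set \circ \cC$, conditioning $\mG_n$ on its multiset of component sizes $(k_1, \ldots, k_l)$ makes the components mutually independent, each uniform on $\cC_{k_i}$. In particular, conditional on $|\mH_n| = k$, the graph $\mH_n$ is uniformly distributed on $\cC_k$ (even when the maximum is attained by several components, since a uniform choice among them preserves uniformity). Hence conditional on its size, $\mH_n$ is distributed exactly as $\mC_k$, which Theorem~\ref{te:maintheorem} controls.

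The main step is therefore to establish tightness of $n - |\mH_n|$: for every $\epsilon > 0$ there should exist $M$ with $\Pr{|\mH_n| < n - M} < \epsilon$ for all large $n$. Since $\cC$ is subcritical, Lemma~\ref{le:cfinite} yields $C(\rhoc) < \infty$, hence $G(\rhoc) = \exp(C(\rhoc)) < \infty$; Corollary~\ref{co:sizepoly} gives $|\cC_n| \sim c \, n^{-3/2}\rhoc^{-n} n!$, and a standard transfer applied to $G(z) = \exp(C(z))$ yields $|\cG_n| \sim c' n^{-3/2}\rhoc^{-n} n!$. The hypothesis that $\cC$ contains all trees ensures $|\cC_k| \ge 1$ for every $k \ge 1$, ruling out arithmetic obstructions. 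I would then invoke the Boltzmann sampler $\Gamma G(\rhoc) \simeq \Gamma(\Set \circ \cC)(\rhoc)$, which generates a $\Po{C(\rhoc)}$-distributed number of i.i.d.\ copies of $\Gamma C(\rhoc)$; conditioning on the total size equalling $n$ produces $\mG_n$. Since the mass function $\Pr{|\Gamma C(\rhoc)| = k} \propto k^{-3/2}$ is heavy tailed, a classical \emph{condensation} or \emph{single-big-jump} argument then shows that one component absorbs $n - O_p(1)$ vertices while the remaining components converge jointly to an unconditioned Boltzmann ensemble at $\rhoc$. This provides the required tightness of $n - |\mH_n|$.

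With the tightness in hand, fix $\epsilon > 0$ and a corresponding $M$. For any bounded continuous $F: (\ndK, d_{\text{GH}}) \to \ndR$, I split
\[
\Ex{F\bigl(\tfrac{\sigma}{2\sca\sqrt{n}} \mH_n\bigr)} = \sum_{k=n-M}^{n}\Pr{|\mH_n|=k}\,\Ex{F\bigl(\tfrac{\sigma}{2\sca\sqrt{n}}\mC_k\bigr)} + R_{n,M}
\]
with $|R_{n,M}| \le \epsilon \|F\|_\infty$. For $k = k(n) \in [n-M, n]$, since $\sqrt{k}/\sqrt{n} \to 1$, the continuity of rescaling on $\ndK$ combined with Theorem~\ref{te:maintheorem} gives
\[
\Ex{F\bigl(\tfrac{\sqrt{k}}{\sqrt{n}}\cdot \tfrac{\sigma}{2\sca\sqrt{k}}\mC_k\bigr)} \longrightarrow \Ex{F(\CRT)}.
\]
Letting $n \to \infty$ and then $\epsilon \to 0$ concludes the convergence in distribution.

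The only substantive obstacle is the tightness of $n - |\mH_n|$. Although the ``giant component plus bounded remainder'' picture is folklore for subcritical graph classes, turning it into a genuinely uniform-in-$n$ bound requires either careful singularity analysis of $\exp(C(z))$ at $\rhoc$ in the Flajolet--Sedgewick style, or a direct coupling of the conditioned Boltzmann ensemble with its unconditioned limit that exhibits the single-big-jump behaviour. Once this is in place, the rest of the argument is simply continuity of the scaling limit in the size parameter.
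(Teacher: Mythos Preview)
Your approach is essentially the same as the paper's: condition on $|\mH_n|$, use that $\mH_n$ given its size $k$ is distributed as $\mC_k$, and then invoke Theorem~\ref{te:maintheorem} together with the fact that $\sqrt{k}/\sqrt{n}\to 1$. The only difference is that the paper does not derive the tightness of $n-|\mH_n|$ itself but simply quotes it as Lemma~\ref{le:sizecomp} from the literature, so your sketched condensation/singularity-analysis argument is additional work the paper sidesteps.
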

We are going to use the known fact that with high probability the random graph $\mG_n$ has a unique giant component with size $n + O_p(1)$. This follows for example from \cite[Thm. 6.4]{MR2249274}.

\begin{lemma}
\label{le:sizecomp}
If $\cC$ contains all trees, then the size of a largest component satisfies $|\mH_n| = n + O_p(1)$.
\end{lemma}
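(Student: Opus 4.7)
The plan is to control the size of the fragment $n - |\mH_n|$ by a direct enumerative argument combining the decomposition $\cG \simeq \Set \circ \cC$ with the subcritical asymptotics of Corollary~\ref{co:sizepoly}. Concretely, the goal is to show that for every $\varepsilon > 0$ there exists $K$ with $\Pr{n - |\mH_n| \ge K} \le \varepsilon$ for all sufficiently large $n$.

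First, I would establish the asymptotic $|\cG_n| \sim e^{C(\rhoc)} |\cC_n|$. Since $\cC$ contains all trees, the offspring distribution $\xi$ has span one, so Corollary~\ref{co:sizepoly} gives $|\cC_n| \sim c\, n^{-5/2}\rhoc^{-n} n!$. Subcriticality together with Lemma~\ref{le:cfinite} ensures $C(\rhoc) < \infty$, so from $G(z) = \exp(C(z))$ a standard singularity analysis (differentiating to $G'(z) = C'(z)\exp(C(z))$ and transferring coefficients, since $\exp(C(\cdot))$ is bounded near $\rhoc$) yields the desired comparison.

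Second, I would analyze the likely regime in which the giant is dominant. For every $j \le n/2$, any $G \in \cG_n$ with $n - |\mH_n(G)| = j$ admits a unique decomposition into a fragment (a graph in $\cG_j$) on a distinguished $j$-subset of vertices and a connected graph (in $\cC_{n-j}$) on the complement, hence
\begin{equation*}
  \Pr{n - |\mH_n| = j} \le \frac{\binom{n}{j}\,|\cC_{n-j}|\,|\cG_j|}{|\cG_n|}.
\end{equation*}
Inserting the asymptotics for $|\cG_n|$ and $|\cC_{n-j}|$ from the previous step gives the upper bound $(1+o(1))\bigl(n/(n-j)\bigr)^{5/2}\cdot |\cG_j|\rhoc^{\,j}/(j!\, e^{C(\rhoc)})$. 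For $j \le n/2$ the polynomial factor is bounded, so summing in $j \ge K$ yields a constant times the tail of the convergent series $\sum_{j \ge 0}|\cG_j|\rhoc^{\,j}/j! = G(\rhoc) = e^{C(\rhoc)} < \infty$, which goes to zero as $K \to \infty$ uniformly in $n$.

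The main obstacle is handling the case $|\mH_n| \le n/2$, where no component is dominant and the above decomposition does not apply. I would dispatch this by a splitting argument: if $|\mH_n| \le n/2$ then $V(\mG_n)$ admits a partition into two non-empty component-unions of sizes $n_1,n_2 \ge n/4$, giving
\begin{equation*}
  \Pr{|\mH_n| \le n/2} \le \frac{1}{|\cG_n|}\sum_{\substack{n_1+n_2=n\\ n_1,n_2 \ge n/4}} \binom{n}{n_1}|\cG_{n_1}||\cG_{n_2}|.
\end{equation*}
The subcritical asymptotics render each summand $O(n^{-5/2})\cdot |\cG_n|$ (because the $n!$ factors telescope and $(n_1 n_2)^{-5/2} n^{5/2} = O(n^{-5/2})$ when $n_1, n_2 \ge n/4$), and summing the $O(n)$ summands gives $O(n^{-3/2}) = o(1)$. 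Combining this with the previous step yields $n - |\mH_n| = O_p(1)$, which is the claim.
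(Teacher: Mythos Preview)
Your argument is correct. The paper, however, does not give its own proof of this lemma: it simply records it as a known fact, citing \cite[Thm.~6.4]{MR2249274}. Your approach is therefore genuinely different in that it is self-contained within the framework already set up in the paper, reducing everything to the coefficient asymptotics of Corollary~\ref{co:sizepoly} and the finiteness $G(\rhoc)=e^{C(\rhoc)}<\infty$. One small remark: your derivation of $|\cG_n|\sim e^{C(\rhoc)}|\cC_n|$ via ``standard singularity analysis'' is a bit breezy, since the paper establishes only the coefficient asymptotics for $|\cC_n|$ and not a singular expansion of $C$ at $\rhoc$; but for your purposes the two-sided bound $|\cG_m|=\Theta(m^{-5/2}\rhoc^{-m}m!)$ is all that is needed, and the upper half follows by an elementary convolution estimate from $G=e^{C}$ together with $|\cC_m|=O(m^{-5/2}\rhoc^{-m}m!)$, while the lower half is simply $|\cG_m|\ge|\cC_m|$. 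With that adjustment the proof goes through cleanly.
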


\begin{proof}[Proof of Theorem~\ref{te:compconv}]
Let $f: \ndK \to \ndR$ be a bounded Lipschitz-continuous function defined on the space of isometry classes of compact metric spaces. We will show that
$
\Ex{f (\frac{\sigma}{2 \sca \sqrt{n}} \mH_n)} \to \Ex{f(\CRT)}
$
as $n$ tends to infinity.
Set $\Omega_n := \log n$. By Lemma~\ref{le:sizecomp} we know that $|\mH_n| = n + O_p(1)$. Hence with high probability we have that $n - |H_n| \le \Omega_n$ and thus
\[
\mathbb{E}\left[f\left(\frac{\sigma}{2 \sca \sqrt{n}} \mH_n\right)\right] = o(1) + \sum_{0 \le k \le \Omega_n}
\mathbb{E}\left[f\left(\frac{\sigma}{2 \sca \sqrt{n}} \mH_n\right) \mid |\mH_n| = n-k\right] \, \Pr{|\mH_n| = n-k}.
\]
The conditional distribution of $\mG_n$ given the sizes $(s_i)_i$ of its components is given by choosing components $K_i \in \cC[s_i]$ independently uniformly at random and distributing labels uniformly at random. In particular, as a metric space, $\mH_n$ conditioned on $|\mH_n| = n-k$ is distributed like the uniform random graph $\mC_{n-k}$. Thus, given $\epsilon >0$ we have for $n$ sufficiently large by Lipschitz-continuity 
\[
\mathbb{E}\left[f\left(\frac{\sigma}{2 \sca \sqrt{n}} \mH_n\right) ~\big|~ |\mH_n| = n-k \right] = \mathbb{E}\left[f\left(\frac{\sigma}{2 \sca \sqrt{n}} \mC_{n-k}\right)\right] \in \Ex{f(\CRT)} \pm \epsilon
\]
for all $0 \le k \le \Omega_n$.
Thus $| \Ex{f(\frac{\sigma}{2 \sca \sqrt{n}} \mH_n)} - \Ex{f(\CRT)}| \le \epsilon$ for sufficiently large $n$. Since $\epsilon>0$ was chosen arbitrarily it follows that $\Ex{f(\frac{\sigma}{2 \sca \sqrt{n}} \mH_n)} \to \Ex{f(\CRT)}$ as $n$ tends to infinity.
\end{proof}

\section{The Scaling Factor of Specific Classes}
\label{sec:examples}
In this section we apply our main results to several specific examples of subcritical graph classes. 
The notation that will be fixed throughout this section is as follows: $\cC$ denotes a subcritical class of connected graphs and $\cB$ its subclass of 2-connected graphs and edges. The radius of convergence of $C(z)$ is denoted by $\rhoc$. The constant $y = \cC^\bullet(\rhoc)$ is the unique positive solution of the equation
\[
yB''(y) = 1.
\]
By Lemma~\ref{le:cfinite} this determines $\rhoc = y \exp(-B'(y))$. Moreover, we set
\[
\sca = \Ex{\shp(\Gamma B'^\bullet(y))},
\]
i.e.\ the expected distance from the $*$-vertex to the root in a random block chosen according to the Boltzmann distribution with parameter $y$. We call $\sca$ the \emph{scaling factor} for $\cC$. The offspring distribution $\xi$ of the random tree corresponding to the sampler $\Gamma \cC^\bullet(y)$ has probability generating function $\varphi(z) = \exp(B'(yz)-\lambdac)$ with $\lambdac = B'(y)$, see Proposition~\ref{pro:coupling}. Its variance is given by
\[
\sigma^2 = 1 + B'''(y) y^2 = \Ex{|\Gamma B'^\bullet(y)|}.
\]
We let $d$ denote the span of the offspring distribution. By applying Corollary~\ref{cor:hddistr} we obtain 
\[
\Ex{\mH(\mC_n^\bullet)} / \sqrt{n} \to \sca \sqrt{2 \pi / \sigma^2} =: H \quad \text{as $n \to \infty$ with $n \equiv 1 \mod d$}
\]
with $\mC_n^\bullet \in \cC_n^\bullet$ drawn uniformly at random. We call $H$ the \emph{expected rescaled height}. Moreover, by applying Corollary~\ref{co:sizepoly} we may assume that
\[
|\cC_n| \sim{} c\, n^{-5/2} \rho^{-n}_Cn! \quad \text{as $n \to \infty$ with $n \equiv 1 \mod d$}
\]
with $c=yd/\sqrt{2 \pi \sigma^2}$. In this section we derive analytical expressions for the relevant constants $\kappa, H, c, \rho,y,\lambda, \sigma^2$ for several graph classes; Table~\ref{tb:constants} provides numerical approximations. For a set of graphs $M$, we denote by $\Forb(M)$ the class of all connected graphs that contain none of the graphs in $M$ as a topological minor; if $M$ contains only 2-connected graphs, then it is easy to see that $\Forb(M)$ is block-stable, cf.\ \cite{MR2744811}. For $n \ge 3$ we denote by $C_n$ a graph that is isomorphic to a cycle with $n$ vertices.

{
\small
\setlength{\extrarowheight}{2.5 pt}

\begin{table}
\begin{center}
    \begin{tabular}{ |l| l | l | l | l | l | l | l | }
    \hline
    Graph Class &$\sca$ & $H$& $c$ & $\rhoc$ & $y$ & $\lambda$ & $\sigma^2$\\ 		\hline
Trees = $\Forb(C_3)$ & 1& $2.50662$ & $0.39894$&$0.36787$&$1$&$1$&$1$ \\
    \hline
     $\Forb(C_4)$ &$1$&$2.13226$&$0.20973$&$0.23618$&$0.27520$&$0.80901$&$1.38196$ \\
    \hline
    $\Forb(C_5)$ &$1.10355$&$1.88657$&$0.10987$&$0.06290$&$0.40384$&$1.85945$&$2.14989$ \\
    \hline
    Cacti Graphs &$1.20297$&$1.99021$& $0.12014$ & $0.23874$ & $0.45631$ & $0.64779$& $2.29559$\\
    \hline
    Outerplanar Graphs &$5.08418$&$1.30501$& $0.00697$ & $0.13659$ & $0.17076$ & $0.22327$ & $95.3658$\\
    \hline
   	\end{tabular}
	\end{center}
	\caption{Numerical approximations of constants for examples of subcritical classes of connected graphs.}
	\label{tb:constants}
	\end{table}
}

\subsection{Trees}
Let $\cC$ be the class of trees, i.e.\ $\cB$ consists only of the graph $K_2$. It is easy to see that the offspring distribution follows a Poisson distribution with parameter one. We immediately obtain:
\begin{proposition}
For the class of tress we have $\kappa=1$ and $\sigma^2=1$.
\end{proposition}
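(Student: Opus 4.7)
The plan is a direct specialization of the general formulas to the class where the only nontrivial block is $K_2$. For the class of trees, every block is either an isolated vertex or a single edge, so $\cB$ consists of a single labeled object on each $2$-element set. Consequently $B(z) = z^2/2$, and therefore $B'(z) = z$, $B''(z) = 1$, and $B'''(z) = 0$.

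Plugging this into the characterizing equation $yB''(y) = 1$ yields $y = 1$, and then the formula $\sigma^2 = 1 + B'''(y)\,y^2$ immediately gives $\sigma^2 = 1$. As a sanity check one can note that the offspring distribution has probability generating function $\varphi(z) = \exp(B'(yz) - B'(y)) = \exp(z-1)$, so $\xi$ is $\mathrm{Po}(1)$ which indeed has variance $1$.

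For $\kappa = \Ex{\shp(\Gamma B'^\bullet(y))}$ I would observe that every object of $\cB'^\bullet$ is a copy of $K_2$ equipped with a distinguished $*$-vertex and a distinguished (non-$*$) root vertex. Since $K_2$ has exactly two vertices joined by a single edge, the $*$-vertex and the root must be these two endpoints, and hence $\shp(B) = d_B(*,\mathrm{root}) = 1$ for every $B \in \cB'^\bullet$. Thus $\shp(\Gamma B'^\bullet(y)) = 1$ deterministically and $\kappa = 1$, without needing to invoke the distribution of the Boltzmann sampler at all.

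There is no real obstacle here: the content of the proposition is just to verify that the general constants collapse to the classical values in the reference case where the enriched tree construction degenerates to an ordinary rooted tree. The combinatorial identification $\cC^\bullet \simeq \cA_{\Set \circ \cB'} = \cA_{\Set \circ \cX} = \cTb$ also confirms that the associated Galton-Watson tree coincides with the standard Poisson$(1)$ tree, consistent with Aldous's original CRT limit for uniform random labeled trees.
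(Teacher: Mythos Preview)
Your proposal is correct and follows the same approach as the paper, which simply notes that $\cB=\{K_2\}$ forces the offspring distribution to be Poisson with parameter one and that the constants follow immediately. You have just written out the details (computing $B$ and its derivatives, checking $y=1$, and observing that $\shp\equiv 1$) that the paper leaves implicit.
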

 
\subsection{$\Forb(C_4)$}
Let $\cC$ denote the connected graphs of the class $\Forb(C_4)$. Then each block is either isomorphic to $K_2$ or $K_3$. Hence $B(z) = z^2/2 + z^3/6$. Moreover, for any $B\in\cB$ and any two distinct vertices in $B$ their distance is one. A simple computation then yields:
\begin{proposition}
For the class $\Forb(C_4)$ we have $\kappa=1$ and $\sigma^2= (5 - \sqrt{5})/2$.
\end{proposition}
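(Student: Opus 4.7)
The plan is first to identify the allowed blocks, then translate directly into generating functions and apply the formulas $yB''(y)=1$, $\kappa=\Ex{\shp(\Gamma B'^\bullet(y))}$, and $\sigma^2 = 1+B'''(y)y^2$ already recorded in the section's preamble.

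The first step is to argue that $\cB=\{K_2,K_3\}$. A graph in $\Forb(C_4)$ contains no $C_k$ with $k\ge 4$ as a topological minor, hence no cycle of length $\ge 4$ as a subgraph. If $B\in\cB$ has at least four vertices, then by 2-connectedness any two vertices lie on a common cycle; since all cycles have length $3$, every pair of vertices is adjacent, so $B$ contains $K_4$ and in particular a $C_4$, contradiction. Thus the only blocks are $K_2$ and $K_3$, each with exactly one labeled copy on its vertex set, giving
\[
B(z)=\frac{z^2}{2}+\frac{z^3}{6},\qquad B'(z)=z+\frac{z^2}{2},\qquad B''(z)=1+z,\qquad B'''(z)=1.
\]

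Next I would solve $yB''(y)=y(1+y)=1$, giving the quadratic $y^2+y-1=0$ with positive root $y=(\sqrt{5}-1)/2$; in particular $y^2=1-y$, which will be useful below. For $\kappa$, recall that a $\Gamma B'^\bullet(y)$-object is a block in $\cB$ together with a distinguished $*$-vertex and a distinguished root vertex (both in the block, and distinct). In every block from $\{K_2,K_3\}$ any two distinct vertices are adjacent, hence joined by a path of length exactly one, so deterministically $\shp(\Gamma B'^\bullet(y))=1$ and therefore $\kappa=1$.

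Finally, for the variance I would simply substitute into $\sigma^2=1+B'''(y)y^2=1+y^2$, and use $y^2=1-y$ to obtain
\[
\sigma^2 = 2-y = 2-\frac{\sqrt{5}-1}{2} = \frac{5-\sqrt{5}}{2},
\]
as claimed. There is no real obstacle in this proposition: the only step requiring a short argument is the identification of $\cB$, and the rest is direct substitution into the formulas already established for subcritical block-stable classes.
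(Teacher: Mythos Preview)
Your proof is correct and follows exactly the same approach as the paper: identify $\cB=\{K_2,K_3\}$, observe that $\shp\equiv 1$ since all blocks are complete, and substitute into $yB''(y)=1$ and $\sigma^2=1+B'''(y)y^2$. The paper in fact leaves the computation implicit (``A simple computation then yields''), so your write-up is simply a more detailed version of the same argument.
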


\subsection{$\Forb(C_5)$}
Recall that the class $\Forb(C_5)$ consists of all graphs that do not contain a cycle with five vertices as a topological minor. Hence, a graph belongs to this class if and only if it contains no cycle of length at least five as subgraph. 
\begin{proposition}
\label{pro:forbc5}
For the class $\Forb(C_5)$ the constant $y$ is the unique positive solution to $zB''(z) = 1$, where $B'$ is given in~\eqref{eq:B'C5}. Moreover, we have
\[
\sca =  \left( 2\,{y}^{2}+4\,y+3 \right) y e^y -\left( 3\,{y}^{2}+12\,y+4 \right) y/2 \approx 1.10355.
\]
and $\sigma^2 = 1 + B'''(y)y^2 \approx 2.14989$.
\end{proposition}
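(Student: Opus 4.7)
The plan is to classify the 2-connected graphs in $\Forb(C_5)$, read off $B(z)$, solve $zB''(z)=1$ for $y$, and then compute $\sca$ from a generating function for doubly-pointed blocks weighted by the distance between the two marks. The combinatorial classification is the main step; the rest is symbolic bookkeeping.

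First I would show that a 2-connected graph in $\Forb(C_5)$ on $n\geq 5$ vertices is either $K_{2,n-2}$ or the ``book'' $K_{2,n-2}+e$, where $e$ joins the two vertices of the size-2 bipartition class. The key observation is that such a graph has diameter at most $2$: two internally disjoint paths of length $\geq 3$ between two far vertices would produce a cycle of length $\geq 6$. An open ear decomposition starting from a shortest cycle (which has length $3$ or $4$) then forces every ear to have length $\leq 2$; a short case analysis on where the endpoints of each ear land shows that no 2-connected $\Forb(C_5)$-graph on five or more vertices other than the two listed families can arise. For $n\leq 4$ the 2-connected graphs are directly $K_3,K_4,K_4-e,C_4$, and $\cB$ additionally contains $K_2$. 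Counting labeled versions ($1,1,10$ labeled blocks for $n=2,3,4$, and $2\binom{n}{2}$ labeled blocks for $n\geq 5$), and summing the resulting exponential series, yields
\[
B(z) \;=\; z^2 e^z \;-\; \frac{z^2}{2} \;-\; \frac{5z^3}{6} \;-\; \frac{z^4}{12},
\]
from which the formula for $B'(z)$ in \eqref{eq:B'C5} and the expressions $B''(z)=(z^2+4z+2)e^z-1-5z-z^2$ and $B'''(z)=(z^2+6z+6)e^z-5-2z$ follow by differentiation. Uniqueness of the positive solution $y$ of $zB''(z)=1$ reduces to the observation that on $(0,\infty)$ the function $(z^3+4z^2+2z)e^z$ is strictly increasing from $0$ to $\infty$ while the right-hand side $1+z+5z^2+z^3$ is a fixed polynomial, so the two graphs meet at exactly one point.

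For the scaling factor, I would use $\sca = \Ex{\shp(\Gamma \cB'^\bullet(y))}$ together with the observation that $\cB'^\bullet$-objects of size $n$ correspond (with a multiplicity factor $n+1$ coming from the choice of label for the $*$-vertex) to doubly-pointed blocks on $n+1$ labeled vertices. This gives
\[
\sca \;=\; \frac{D(y)}{y^2 B''(y)} \;=\; \frac{D(y)}{y}, \qquad D(y) := \sum_{b\in \cB^{\bullet\bullet}} d(b)\,\frac{y^{|b|}}{|b|!},
\]
where $d(b)$ denotes the distance between the two marks and the last equality uses $yB''(y)=1$. For each block type I would tabulate the sum of distances over all ordered pairs of distinct vertices (twice the Wiener index): $2,6$ for $K_2,K_3$; $12,14,16$ for $K_4,K_4-e,C_4$; and for $n\geq 5$, $2n^2-6n+8$ for $K_{2,n-2}$ and $2n^2-6n+6$ for $K_{2,n-2}+e$. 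Multiplying by the appropriate number of labelings $n!/|\Aut(G)|$, reorganising, and summing via the identities for $\sum_{k\geq 0} k^j y^k/k!$ yields
\[
D(y) \;=\; (2y^4+4y^3+3y^2)e^y \;-\; 2y^2 \;-\; 6y^3 \;-\; \tfrac{3}{2}y^4,
\]
and dividing by $y$ reproduces the stated formula $(2y^2+4y+3)ye^y-(3y^2+12y+4)y/2$.

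Finally, the expression $\sigma^2 = 1+B'''(y)y^2$ is the formula for the variance of the offspring distribution $\xi$ recorded immediately after Lemma~\ref{le:clarifysubc}, and the numerical approximation $2.14989$ follows by evaluating the explicit form of $B'''$ at $y$. The main obstacle of the proof is the structural classification of 2-connected graphs in $\Forb(C_5)$ with at least five vertices; once this is established, every remaining step is a direct, mechanical computation from the generating function $B(z)$.
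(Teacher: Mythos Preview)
Your proposal is correct and follows essentially the same route as the paper. Both arguments classify the blocks of $\Forb(C_5)$ via an ear decomposition (the paper phrases this as an induction showing that from $K_{2,m}$ one can only pass to $K_{2,m+1}$ or $K^+_{2,m}$, and from $K^+_{2,m}$ only to $K^+_{2,m+1}$), obtain the same $B(z)$, and then compute $\sca$ by summing block distances weighted by the Boltzmann distribution. The only organisational difference is that you package the distance computation into a single Wiener-index generating function $D(y)$ and use $\sca=D(y)/y$, whereas the paper splits the blocks into three subfamilies $\cS,\cH,\cP$ and computes the conditional expectation $\Ex{\shp(\cdot)}$ for each size; the arithmetic is identical.
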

Before proving Proposition~\ref{pro:forbc5} we identify the unlabeled blocks of this class. This result (among extensions to $\Forb(C_6)$ and $\Forb(C_7)$) was given by  Gim\'enez, Mitsche and Noy \cite{2013arXiv1304.5049G}.
\begin{proposition}
The unlabeled blocks of the class $\Forb(C_5)$ are given by
\begin{align}
\label{eq:exc5bic}
K_2, K_4, (K_{2,m})_{m \ge 1},  (K^+_{2,m})_{m \ge 2}.
\end{align}
Here $K_n$ denotes the complete graph and $K_{m, n}$ the complete bipartite graph with bipartition $\{A_m, B_n \}$. The graph $K^+_{2,n}$ is obtained from $K_{2, n}$ by adding an additional edge between the two vertices from $A_2$. 
\end{proposition}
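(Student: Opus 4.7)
The plan is to establish both inclusions. For the forward direction, I would verify directly that each graph in the list belongs to $\cB$: $K_2$ is included by the definition of $\cB$, while the remaining graphs are all $2$-connected, and one checks that they contain no cycle of length $\ge 5$. Indeed, $K_4$ has circumference $4$; the complete bipartite graph $K_{2,m}$ has all cycles of length exactly $4$ (each cycle alternates sides and the $2$-side has only two vertices); and in $K_{2,m}^+$, every cycle either avoids the extra edge $ab$ (hence is a $4$-cycle of the $K_{2,m}$-part) or uses $ab$ together with a single path $a\, c_i\, b$, producing a triangle.

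For the converse, let $B$ be a $2$-connected graph in $\Forb(C_5)$ with $|V(B)| \ge 3$. The main tool is Menger's theorem: any two distinct vertices $u, v$ admit two internally disjoint $uv$-paths, whose concatenation is a cycle of total length $\le 4$. This yields two immediate consequences: (i) if $u \not\sim v$ then each path has length $\ge 2$, forcing both to have length exactly $2$, so $u$ and $v$ share at least two common neighbors; (ii) $K_n$ for $n \ge 5$ contains a $5$-cycle, so the only complete blocks are $K_3$ and $K_4$. Combined with a direct enumeration of $2$-connected graphs on at most $4$ vertices (namely $K_3, K_{2,2}, K_4 \setminus e = K_{2,2}^+, K_4$, all in the list), this disposes of the cases $|V(B)| \le 4$ and the case where $B$ is complete.

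It remains to treat the non-complete case with $|V(B)| \ge 5$. Pick non-adjacent $u, v$ and set $W := N(u) \cap N(v)$, so $|W| \ge 2$ by (i). The crucial structural claim is that $W$ is an independent set whenever $|W| \ge 3$: for distinct $w_i, w_j, w_k \in W$, an edge $w_i w_j$ would produce the forbidden $5$-cycle $u\, w_i\, w_j\, v\, w_k\, u$. This exhibits $K_{2,|W|}$ on $\{u,v\} \cup W$ with $\{u,v\}$ as the $2$-side, and the only possible additional edge among these vertices is $uv$, which when present gives $K_{2,|W|}^+$.

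The main obstacle, and the heart of the argument, is to rule out any further vertex $x \notin \{u,v\} \cup W$. Using $2$-connectivity, $x$ admits two internally disjoint paths to $u$ of combined length $\le 4$, and similarly to $v$; a short case analysis on which of $u$, $v$, and the $w_i$ the vertex $x$ is adjacent to invariably produces either a forbidden $C_5$ (by linking $x$ to a $w_i$ and then traversing $u$ or $v$ and another $w_j$, $w_k$, exploiting the abundance of length-$2$ $uv$-paths through $W$) or forces $x \in \{u,v\} \cup W$, contradiction. The residual possibility $|W| = 2$ with $|V(B)| \ge 5$ is handled by the same technique: taking $W = \{w_1, w_2\}$ and an additional vertex $x$, $2$-connectivity forces $x$ to attach to the $4$-cycle $u\, w_1\, v\, w_2\, u$ in a way that creates a $5$-cycle. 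This completes the classification and establishes the proposition.
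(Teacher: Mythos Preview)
Your route is genuinely different from the paper's. The paper argues via the open-ear decomposition of $2$-connected graphs: every such graph is obtained from a cycle by repeatedly attaching $H$-paths, and since all intermediate graphs must remain in $\Forb(C_5)$, one must start with $C_3$ or $C_4$ and may only add ears of length at most~$2$; a short induction then shows that from $K_{2,m}$ (respectively $K_{2,m}^+$) the only admissible ear-extensions yield $K_{2,m+1}$ or $K_{2,m}^+$ (respectively $K_{2,m+1}^+$). Your approach through Menger's theorem and the key observation that any two non-adjacent vertices share at least two common neighbours is a legitimate alternative and is more direct in spirit; it trades the inductive bookkeeping of ear decomposition for a terminal case analysis.

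That case analysis, however, has a real gap. In the situation $|W|=2$ with $|V(B)|\ge 5$ you assert that any extra vertex $x$ attaches to the $4$-cycle $u\,w_1\,v\,w_2\,u$ so as to create a $C_5$. This fails when $x$ is adjacent to both $w_1,w_2$ and to neither $u$ nor $v$: the graph induced on $\{u,v,w_1,w_2,x\}$ is then $K_{2,3}$ (or $K_{2,3}^+$ if $w_1w_2\in E$), which contains no $5$-cycle. The underlying issue is that your non-adjacent pair $u,v$ may have been chosen on the $m$-side of some $K_{2,m}$, where $|N(u)\cap N(v)|=2$ regardless of $m$. One repair is to choose $u,v$ non-adjacent with $|N(u)\cap N(v)|$ \emph{maximal}; in the offending sub-case this forces $w_1w_2\in E$, and a further (but not entirely short) argument shows that every remaining vertex is adjacent to precisely $w_1$ and $w_2$, giving $K_{2,m}^+$ with $2$-side $\{w_1,w_2\}$. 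Alternatively, upon encountering this sub-case you can simply swap roles and take $\{w_1,w_2\}$ as the new $2$-side. Either way, more work is needed than your sketch indicates; the paper's ear-decomposition proof avoids this by never committing to a distinguished pair.
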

\begin{proof}
We may verify~\eqref{eq:exc5bic} by considering the standard decomposition of 2-connected graphs: an arbitrary graph $G$ is 2-connected if and only if it can be constructed from a cycle by adding $H$-paths to already constructed graphs $H$ \cite{MR2744811}. If $G \in\Forb(C_5)$, then so do all the graphs along its decomposition. In particular we must start with a triangle or a square. Since every edge of a 2-connected graph lies on a cycle, we may only add paths of length at most two in each step. In particular, for $m \ge 3$ a $K_{2,m}$ may only become a $K^+_{2,m}$ or $K_{2,m+1}$, and a $K^+_{2,m}$ may only become a $K^+_{2,m+1}$. Thus~(\ref{eq:exc5bic}) follows by induction on the number of vertices.
\end{proof}

\begin{proof}[Proof of Proposition~\ref{pro:forbc5}.]
\noindent With foresight, we use the decomposition
\begin{align}
\cB = \cS + \cH + \cP
\end{align}
with the classes of labeled graphs $\cS$, $\cH$ and $\cP$ defined by their sets of unlabeled graphs $\tilde{\cS} = \{ K_2, K_3, K_4, C_4\}$, $\tilde{\cH} = \{K_{2,m} \mid m \ge 3\}$ and $\tilde{\cP} = \{K^+_{2,m} \mid m \ge 2\}$.
Any unlabeled graph from $\cH$ or $\cP$ with $n$ vertices has exactly $\binom{n}{2}$ different labelings, since any labeling is determined by the choice of the two unique vertices with degree at least three. Hence
\[
S(x) = x^2/2 + x^3/6 + x^4/6, \quad H(x) = \sum_{n \ge 5} \binom{n}{2} \frac{x^n}{n!} \quad \text{and} \quad P(x) = \sum_{n \ge 4} \binom{n}{2} \frac{x^n}{n!}
\]
and thus
\begin{equation}
\label{eq:B'C5}
B'(x) = x(x+2) {{\rm e}^{x}}-x \left( 15\,x+2\,{x}^{2}+6 \right) / 6 .
\end{equation}
Solving the equation $B'^\bullet(y)=1$ yields
\[
y \approx 0.40384.
\]
First, let $\mH_n \in \cH'^\bullet_n$ with $n \ge 4$ be drawn uniformly at random. We say that a vertex lies on the left if it has degree at least three, otherwise we say it lies on the right. There are $n \binom{n+1}{2}$ graphs in the class $\cH'^\bullet_n$ and precisely $n^2$ of those have the property that the $*$-vertex lies on the left. The distance of the root and the $*$-vertex equals two if they lie on the same side and one otherwise. Hence
\[
	\Ex{\shp(\mH_n)} = \frac{n}{\binom{n+1}{2}} \left( \frac{1}{n} \cdot 2 + \frac{n-1}{n} \cdot1 \right) + \left(1 - \frac{n}{\binom{n+1}{2}} \right) \left( \frac{2}{n}\cdot1 + \frac{n-2}{n} \cdot2 \right).
\]
Let  $\mP_n \in \cP'^{\bullet}_n$ with $n \ge 3$ and $\mS_n \in \cS_n$ with $n=1,2,3$ be drawn uniformly at random. Analogously to the above calculation we obtain
\[
\Ex{\shp(\mP_n)} = \frac{n}{\binom{n+1}{2}}1 + \left(1 - \frac{n}{\binom{n+1}{2}} \right)\left( \frac{2}{n}\cdot1 + \frac{n-2}{n}\cdot2 \right)
\]
and
\[
\Ex{\shp(\mS_1)} = \Ex{\shp(\mS_2)} = 1, \qquad \Ex{\shp(\mS_3)} = \frac{1}{4} \cdot1 + \frac{3}{4}\left(\frac{2}{3}\cdot1 + \frac{1}{3}\cdot2 \right) = \frac{5}{4}.
\]
Since $B'^\bullet(y)=1$ we have for any class $\cF \in \{\cS'^{\bullet}, \cH'^{\bullet},  \cP'^{\bullet}\}$ that
\[
\Ex{\shp(\Gamma B'^\bullet(y)), \Gamma B'^\bullet(y) \in \cF} = \sum_n \left( [z^n]F(yz) \right) \Ex{\shp(\mF_n)}.
\]
Summing up yields
\[
\Ex{\shp(\Gamma B'^\bullet(y))} = \left( 2\,{y}^{2}+4\,y+3 \right) y e^y - \left( 3\,{y}^{2}+12\,y+4 \right) y/2 \approx 1.10355.
\]
\end{proof}

\subsection{Cacti Graphs}
A cactus graph is a graph in which each edge is contained in at most one cycle. Equivalently, the class of cacti graphs is the block-stable class of graphs where every block is either an edge or a cycle. In the following $\cC$ denotes the class of cacti graphs.
\begin{proposition}
For the class of cacti graphs the constant $y$ is the unique positive solution to $zB''(z) = 1$, where $B'$ is given in~\eqref{eq:B'Cac}. Moreover, we have
\[
\sca = {\frac {{y}^{4}-2\,{y}^{3}+2\,y-2}{ \left( {y}^{2}-2\,y+2 \right) 
 \left( 1+y \right)  \left( y-1 \right) }} \approx 1.20297.
\]
and $\sigma^2 = 1 + B'''(y)y^2 \approx 2.29559$.
\end{proposition}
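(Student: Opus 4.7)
The plan is as follows. A cactus is characterized by every block being an edge or a cycle, so the block class $\cB$ consists of $K_2$ together with all cycles $C_n$ for $n\ge 3$. Since a labeled $n$-cycle admits $(n-1)!/2$ distinct labelings, the exponential generating function is
\[
B(z) = \frac{z^2}{2} + \frac{1}{2}\sum_{n\ge 3}\frac{z^n}{n} = \frac{z^2}{4} - \frac{z}{2} - \frac{1}{2}\ln(1-z).
\]
Differentiation yields $B'(z) = z(2-z)/(2(1-z))$, $B''(z) = \tfrac{1}{2} + 1/(2(1-z)^2)$ and $B'''(z) = 1/(1-z)^3$, from which $\sigma^2 = 1 + y^2/(1-y)^3$ follows at once. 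The criticality equation $yB''(y) = 1$ rearranges to the cubic $y^3 - 4y^2 + 6y - 2 = 0$. Its derivative $3y^2 - 8y + 6$ has negative discriminant, hence the cubic is strictly increasing; since it takes values $-2$ and $1$ at $y=0$ and $y=1$ respectively, there is a unique positive real root (numerically $y \approx 0.45631$).

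For $\sca$, I would use the identity $B'^\bullet(y) = yB''(y) = 1$, which implies that the Boltzmann sampler $\Gamma B'^\bullet(y)$ draws a doubly-pointed block $B_0$ (with distinguished $*$-vertex and root) with probability $y^{|B_0|}/|B_0|!$, so
\[
\sca = \sum_{B_0 \in \cB'^\bullet} \shp(B_0)\,\frac{y^{|B_0|}}{|B_0|!}.
\]
The edge contributes $y\cdot 1 = y$. A cycle of length $n\ge 3$ gives a $\cB'^\bullet_{n-1}$-structure from a labeled $n$-cycle on $[n-1]\cup\{*\}$ together with a root from $[n-1]$; for one fixed cycle arrangement, summing $\shp$ over the $n-1$ possible roots is the standard distance sum $\lfloor n^2/4\rfloor$ from a fixed vertex to the others in $C_n$. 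Multiplying by $(n-1)!/2$ labeled arrangements and dividing by $(n-1)!$, cycles of length $n$ contribute $y^{n-1}\lfloor n^2/4\rfloor/2$.

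Separating the resulting sum by parity of $n$ and applying the standard identities $\sum_{k\ge 1} k(k+1)u^k = 2u/(1-u)^3$ and $\sum_{k\ge 1} k^2 u^k = u(1+u)/(1-u)^3$ with $u=y^2$ collapses it to
\[
\sca = \frac{y}{2} + \frac{y(1+y)^2}{2(1-y^2)^3} = \frac{y}{2} + \frac{y}{2(1-y)^3(1+y)}.
\]
Bringing these terms over the common denominator $2(1-y)^3(1+y)$ and expanding $(1-y)^3(1+y) = 1 - 2y + 2y^3 - y^4$ gives $\sca = y(2 - 2y + 2y^3 - y^4)/[2(1-y)^3(1+y)]$. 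The final step, which is the only place the defining equation of $y$ is used, is to substitute its equivalent form $2(1-y)^2 = y(y^2-2y+2)$ into the denominator; one factor of $y$ cancels and $(1-y)^3(1+y)$ becomes $y(y^2-2y+2)(1-y^2)/2$, producing
\[
\sca = \frac{2 - 2y + 2y^3 - y^4}{(y^2-2y+2)(1-y^2)} = \frac{y^4 - 2y^3 + 2y - 2}{(y^2-2y+2)(1+y)(y-1)}
\]
as claimed. The only non-routine ingredient is the distance-sum $\lfloor n^2/4\rfloor$ for cycles combined with keeping the $\cB'^\bullet$-normalization straight; the rest is generating-function manipulation, and the main care is in the final substitution via the cubic.
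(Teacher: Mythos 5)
Your proposal is correct and follows essentially the same route as the paper: identify the blocks as edges and cycles, derive $B'(z)=z+\tfrac{z^2}{2(1-z)}$, reduce $yB''(y)=1$ to the cubic $y^3-4y^2+6y-2=0$, and compute $\sca$ as a Boltzmann-weighted average of the root-to-$*$ distance in a random cycle via the sum-of-distances $\lfloor n^2/4\rfloor$. The only difference is presentational — you parametrize by total cycle length rather than by the number of non-$*$ vertices and you carry out explicitly the series summation and final simplification via the cubic that the paper leaves as "summing up yields the claimed expression."
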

\begin{proof}
\noindent By counting the number of labelings of a cycle, we obtain $|\cB'_n| = n!/2$ for $n \ge 2$. It follows that
\begin{equation}
\label{eq:B'Cac}
B'(z) = z + \frac{z^2}{2(1-z)}
\end{equation}
and hence
$
B'^\bullet(z) = z + \frac{1}{2} \sum_{n\ge 2}n z^n =  \frac{z^3 -2z^2 +2z}{2 (z-1)^2}.
$
Solving the equation $B'^\bullet(y)=1$ yields
\[
	y = -\frac13(17+3\,\sqrt {33})^{1/3}+ \frac23(17+3\,\sqrt {33})^{-1/3}+\frac43 \approx 0.45631.
\]
Let $\Gamma B'^\bullet(y)$ denote a Boltzmann-sampler for the class $\cB'^\bullet$ with parameter $y$ and for any $n \ge 1$ let $\mB_n \in \cB_n'^\bullet$ be drawn uniformly at random. Since $B'^\bullet(y)=1$, it follows that
\[
\sca = \Ex{ \Ex{\shp(\Gamma B'^\bullet(y))\mid |\Gamma B'^\bullet(y)|}}
= \sum_{n \ge 1} \shp(\mB_n) [z^n]B'^\bullet(yz) = \shp(B_1)y + \frac{1}{2} \sum_{n \ge 2} \shp(\mB_n) n y^n.
\]
Clearly $\shp(\mB_1) = 1$ and for $n \ge2 $ we have that $\shp(\mB_n)$ is distributed like the distance from the $*$-vertex to a uniformly at random chosen root from $[n]$ in the cycle $(*,1,2,\ldots,n)$. Hence
\[
\shp(\mB_n) = \begin{cases}
	\frac{2}{n} \sum_{i=1}^{n/2}i = \frac{n+2}{4}, &n \text{ is even}\\
	\frac{n+1}{2n} + \frac{2}{n} \sum_{i=1}^{(n-1)/2}i = \frac{(n+1)^2}{4n}, &n \text{ is odd}
	\end{cases}.
\]
Summing up over all possible values of $n$ yields the claimed expression for $\sca$.
\end{proof}

\subsection{Outerplanar Graphs}
An outerplanar graph is a planar graph that can be embedded in the plane in such a way that every vertex lies on the boundary of the outer face. Let $\cC$ denote the class of connected outerplanar graphs and $\cB$ the subclass consisting of single edges or 2-connected outerplanar graphs.

\begin{proposition}
\label{pr:scouterplanar}
For the class of outerplanar graphs the constant $y$ is the unique positive solution to $zB''(z) = 1$, where $B'(z) = (z + D(z))/2$ and $D$ is given in~\eqref{eq:egfDD}. Moreover, 
\[
\sca = \frac{y}{2} + \left(1-\frac{y}{2}\right)\frac {8\,{\w}^{4}-16\,{\w}^{3}+4\,\w-1}{ \left( 4\,{
\w}^{3}-6\,{\w}^{2}-2\,\w+1 \right)  \left( 2\,\w-1
 \right) } \approx 5.0841
 \text{ with } \w = D(y)
\] 
and $\sigma^2 = 1 + B'''(y)y^2 \approx 95.3658$.
\end{proposition}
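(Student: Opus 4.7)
The plan is to follow the template used for the earlier classes in this section: identify $B'(z)$ explicitly, solve $yB''(y)=1$ for $y$, and then compute both $\sca = \Ex{\shp(\Gamma B'^\bullet(y))}$ and $\sigma^2 = 1 + B'''(y)y^2$ by exploiting the explicit form of $B'$. What is particular to outerplanar graphs is that the 2-connected blocks with at least three vertices are in bijection with labeled dissections of a polygon, which makes $B'(z)$ most naturally expressed in terms of a dissection generating function $D(z)$.

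First I would establish the identity $B'(z)=(z+D(z))/2$ where $D(z)$ is the EGF of dissections of a polygon with a marked oriented edge on the outer cycle. The edge block $K_2$ contributes the summand $z$, while any 2-connected outerplanar graph with $n\ge 3$ vertices has its outer face as a unique Hamilton cycle and is therefore a labeled dissection; a derived structure in $\cB'$ corresponds to marking the $*$-vertex, which up to a factor of two accounting for the two possible orientations of the Hamilton cycle at $*$ is the same data as an oriented root edge. The standard decomposition of a dissection at the face containing the root edge yields a quadratic functional equation for $D$ whose closed-form solution is~\eqref{eq:egfDD}. Differentiating then makes $yB''(y)=1$ a concrete algebraic equation in $y$ and $w=D(y)$; numerical resolution gives $y\approx 0.17076$, and $\sigma^2=1+B'''(y)y^2$ follows immediately by a second differentiation.

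It remains to compute $\sca = \Ex{\shp(\Gamma B'^\bullet(y))}$. Since $B'^\bullet(y) = yB''(y) = 1$, the Boltzmann sampler $\Gamma B'^\bullet(y)$ returns the edge $K_2$ with probability equal to $[z]zB''(z)\big|_{z=y}$, which the decomposition $B'(z)=(z+D(z))/2$ shows to be $y/2$; the $K_2$ case contributes the additive term $1\cdot y/2 = y/2$ to $\sca$. Conditionally on returning a dissection of size $\ge 3$ (probability $1-y/2$), I would compute the expected shortest path length between the marked $*$-vertex and the root by introducing a bivariate generating function $P(z,u)$ in which $u$ marks the graph-distance between two marked boundary vertices of the dissection. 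A functional equation for $P$ arises from decomposing at the face carrying the distinguished oriented edge: every internal face either lies on the dual path between the two marked vertices (thus incrementing the exponent of $u$ by the number of extra edges it adds along the shortest boundary-to-boundary traversal) or is pendant (with ordinary dissection generating function $D(z)$ and no contribution to the distance). Differentiating with respect to $u$ at $u=1$ and substituting $z=y$, $w=D(y)$ should, after algebraic simplification using the defining equation of $D$, produce the rational fraction in $w$ claimed in the statement.

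The main obstacle is the last step: setting up the bivariate distance-tracking functional equation for $P(z,u)$ correctly so that it both captures the fact that a shortest path in a dissection may cut through diagonals (rather than simply follow the boundary) and remains solvable in closed form. The expected structure is that $P(z,u)$ is algebraic over $\mathbb{Q}(z,u)$ with the same kind of quadratic radical as $D(z)$, so that $\partial_u P(y,1)$ becomes a rational function of $w=D(y)$ evaluated at the algebraic point determined by $yB''(y)=1$; the specific denominator factors $(4w^3-6w^2-2w+1)$ and $(2w-1)$ appearing in the statement should arise as powers of $D'(y)$ and of the derivative of the defining polynomial of $D$, consistent with differentiation identities forced by the functional equation.
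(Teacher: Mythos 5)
Your first two steps (the identity $B'(z)=(z+D(z))/2$ via orienting the Hamilton cycle, the root-face decomposition giving the closed form~\eqref{eq:egfDD} for $D$, and the ensuing computation of $y$ and $\sigma^2$) coincide with the paper's Lemmas~\ref{lem:eqspOP} and~\ref{lem:specDDb}. The gap is in the last step, which is the heart of the proposition, and you have in effect flagged it yourself as an ``obstacle'' without resolving it. A single bivariate series $P(z,u)$ marking the distance from the $*$-vertex to the root does \emph{not} close under the root-face decomposition: when the root lies in the sub-dissection attached to the $i$th edge of the root face, the distance from $*$ to the root is a \emph{minimum} of two quantities --- (length of the left arc of the root face to that edge) plus (distance from the tail of that edge to the root inside the sub-dissection), versus (length of the right arc) plus (distance from the head). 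So the recursion inevitably involves the distances to \emph{both} endpoints of the oriented root edge, and these are dependent random variables; knowing only the distribution of one of them (which is all $P(z,u)$ records) is not enough to evaluate the min. Your suggestion that each face on the dual path ``increments the exponent of $u$'' by a fixed amount presupposes that the shortest path is determined face by face, which is exactly what the min structure prevents.

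The paper resolves this by working with three quantities simultaneously: $S$ and $S'$, the distances from the root vertex to the tail and to the head of the oriented root edge of a $\cBra$-object, and $M=\min\{S,S'\}$. Conditioning on the position $(\ell,k)$ of the root block within the root face gives distributional recursions such as $S_{\ell,k}\eqdist\min\{\ell+S,\,k+1+S'\}$; the crucial observation that $|S-S'|\le 1$ turns each such min into a deterministic case distinction on $(\ell,k)$, so that taking expectations and summing over $\ell,k$ with weights $\Ba(y)^{\ell+k}$ yields a closed $3\times3$ linear system $A\,(\Ex{S},\Ex{S'},\Ex{M})^\top=b$ whose solution is the stated rational function of $\w$. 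If you want to push your generating-function route through, you would need (at least) a pair of bivariate series tracking $S$ and $S'$ jointly, plus the $|S-S'|\le1$ observation to decouple the min --- at which point you have reproduced the paper's linear system in different notation. As written, your proposal does not contain the idea that makes the computation of $\sca$ terminate.
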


Following \cite{MR2789731} we develop a specification of $\cB'^\bullet$ that eventually will enable us to prove the above expressions of the relevant constants. Any 2-connected outerplanar graph has a unique Hamilton cycle, which corresponds  to the boundary of the outer face in any drawing having the property that all vertices lie on the outer face. The edge set of a 2-connected outerplanar graph can thus be partitioned in two parts: the edges of the Hamilton cycle, and all other edges, which we refer to as the set of \emph{chords}. Let $\cBa$ denote the class obtained from $\cB'$ by orienting the Hamilton cycle of each object of size at least three in one of the two directions and marking the oriented edge whose tail is the $*$-vertex. The block consisting of a single edge is oriented in the unique way such that the $*$-vertex is the tail of the marked edge. We start with some observations.

\begin{figure}[b]
	\centering
	\begin{minipage}{0.7\textwidth}
  		\centering
  		\includegraphics[width=1.0\textwidth]{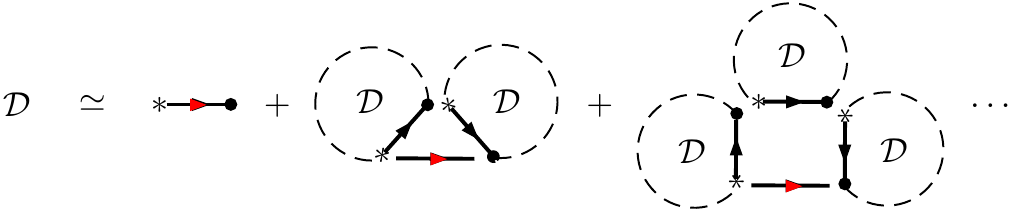}
  		\caption{Recursive specification of the class $\protect \cBa$.}
  		\label{fi:decomp1}
	\end{minipage}
\end{figure}

\begin{lemma}
\label{lem:eqspOP}
We have that $B'(z) = (z + D(z))/2$ and 
\[
\Ex{\shp(\Gamma B'^\bullet(x)} = \frac{x}{2B'^{\bullet}(x)} + (1-\frac{x}{2B'^{\bullet}(x)}) \Ex{\shp(\Gamma \Bra(x))}.
\]
\end{lemma}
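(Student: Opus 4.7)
The plan is to derive both assertions from a single orientation map. First I would verify the identity $B'(z) = (z + D(z))/2$ by observing that any $2$-connected outerplanar graph carries a unique Hamilton cycle, which admits exactly two orientations as soon as it has at least three vertices, while the single-edge object admits only one. Hence the forgetful map $\cBa \to \cB'$ is a $2$-to-$1$ covering on objects of size $\ge 2$ and a bijection on the size-$1$ object, giving $D(z) = 2B'(z) - z$. Differentiating yields the pointed analogue $\Bra(z) = 2B'^\bullet(z) - z$, so that $x/(2B'^\bullet(x))$ and its complement $\Bra(x)/(2B'^\bullet(x))$ form a probability decomposition.

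Next I would lift this algebraic identity to a coupling for $\Gamma B'^\bullet(x)$. With probability $x/(2B'^\bullet(x))$ I return the single-edge object (with the only possible placements of $*$ and the root), and with the complementary probability I invoke $\Gamma \Bra(x)$ and discard the orientation of its Hamilton cycle. A direct weight comparison shows that this reproduces the Boltzmann distribution on $\cB'^\bullet$: for every object $\gamma$ of size $n \ge 2$ the second branch contributes $(\Bra(x)/(2B'^\bullet(x))) \cdot 2 x^n/(\Bra(x)\, n!) = x^n/(B'^\bullet(x)\, n!)$, while the size-$1$ object receives equal contributions $x/(2B'^\bullet(x))$ from the two branches, summing to the required $x/B'^\bullet(x)$.

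Finally, I would take expectations through the coupling. Since $\shp$ depends only on the underlying block metric and is insensitive to the choice of Hamilton-cycle orientation, in the $\Gamma \Bra(x)$ branch the random variable $\shp(\cdot)$ has the same law as $\shp(\Gamma \Bra(x))$, while in the single-edge branch $\shp$ is deterministically $1$. Combining the two branches with the stated mixing probabilities yields the identity. The only point requiring care is the boundary case of size $1$: the single edge is the unique object at which the orientation map fails to be $2$-to-$1$, and this discrepancy is precisely what produces the corrective terms $z$ in $D(z) = 2B'(z) - z$ and $x/(2B'^\bullet(x))$ in the expectation formula; the matching of these boundary contributions is the main bookkeeping step to get right.
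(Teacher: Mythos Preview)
Your proposal is correct and follows essentially the same approach as the paper: both derive the sampler from the relation $2\cB' \simeq \cX + \cBa$ (the paper states this as a species isomorphism and invokes the Boltzmann-sampler rules, while you phrase it via the forgetful orientation map and check the weights by hand). The only cosmetic difference is that the paper leaves the weight verification implicit, whereas you spell out the size-$1$ boundary case explicitly.
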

\begin{proof}
We have an isomorphism
$
\cB' + \cB' =: 2 \cB' \simeq \cX + \cBa.
$
Consequently, the classes $\cB'^\bullet$ and $\cBra$ obtained by additionally rooting at a non-$*$-vertex satisfy
\[
2 \cB'^\bullet \simeq \cX + \Bra.
\]
Hence the following procedure is a Boltzmann sampler for the class $\cB'^\bullet$ with parameter $x$.

\smallskip
\begin{tabular}{lll}
$\Gamma B'^{\bullet}(x)$:  
		& $s \leftarrow \text{Bern}(\frac{x}{2 B'^{\bullet}(x)})$\\
		& \IF $s = 1$ \THEN \RETURN a single edge $\{*, 1\}$ rooted at $1$\\
		& \ELSE \RETURN $\Gamma \Bra(x)$ without the orientation \\
\end{tabular}
\smallskip

\noindent This concludes the proof.
\end{proof}
\noindent Hence it suffices to study the class $\cBra$, see also Figures  \ref{fi:decomp1} and \ref{fi:decomp0}.
\begin{lemma}
\label{lem:specDDb}
The classes $\cBa$ and $\cBra$ satisfy 
\begin{align*}
\cBa &= \cX + {\cBa}\plh \cBa + {\cBa}\plh \cBa \plh \cBa + \ldots \\
\cBra &= \cX + (\cBra \plh \cBa + \cBa \plh \cBra) + (\cBra \plh {\cBa}\plh \cBa + \cBa \plh \cBra \plh \cBa + {\cBa}\plh \cBa \plh \cBra) + \ldots 
\end{align*}
Their exponential generating functions are given by
\begin{equation}
\label{eq:egfDD}
\Bra(z) = \frac {{z} ( {\Ba(z)}-1 ) ^{2}}{2 {\left(\Ba(z)\right)}^{2}-4 {\Ba(z)}
+1} \quad \text{and} \quad {\Ba(z)} = \frac{1}{4}(1 + z - \sqrt{z^2 - 6z + 1}).
\end{equation}
\end{lemma}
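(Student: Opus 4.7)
The plan is to exhibit both displayed identities as species isomorphisms obtained from the canonical ear-decomposition of a $\cBa$-object along the inner face adjacent to its marked root edge. The geometric input is that any 2-connected outerplanar graph with at least three vertices admits an essentially unique planar embedding in which every vertex lies on the outer face; the boundary is the unique Hamilton cycle, and the orientation carried by a $\cBa$-object fixes the remaining reflection ambiguity. Hence each $\cBa$-object $B$ determines a canonical embedding together with a distinguished oriented root edge $e=(*,v)$ and a well-defined inner face $F$ incident to $e$.

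The boundary of $F$ is a polygon $*=u_0,u_1,\ldots,u_\ell=v$, and each non-root edge $(u_{i-1},u_i)$ of $F$ is either a Hamilton edge of $B$ or a chord. When $\ell=1$ the graph is the single edge itself, giving the $\cX$-summand. For $\ell\ge 2$, the subgraph $B_i$ spanned by $u_{i-1}$, $u_i$ and all vertices lying on the outer Hamilton arc strictly between $u_{i-1}$ and $u_i$ is again 2-connected outerplanar (or a single edge); declaring $u_{i-1}$ to be its $*$-vertex and taking $(u_{i-1},u_i)$ as the oriented marked edge turns $B_i$ into a $\cBa$-object. The label sets $V(B_i)\setminus\{u_{i-1}\}$ partition the label set of $B$, while $u_i$ simultaneously plays the role of ``next vertex'' of $B_i$ and of $*$-vertex of $B_{i+1}$. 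This assignment is inverse to the gluing operation $\plh$ (which by definition identifies the next vertex of one factor with the $*$-vertex of the following one), proving the first specification. For $\cBra$, the additional non-$*$ root lies in exactly one label set $V(B_i)\setminus\{u_{i-1}\}$; marking that piece as a $\cBra$-object and leaving the others as $\cBa$-objects produces the second specification, with the case $\ell=1$ corresponding to the single edge whose unique non-$*$ label is the extra root.

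To pass to generating functions, one observes that under $\plh$ the $*$-vertex of each non-leftmost factor is unlabeled (in the $\cB'$-convention) and becomes identified with a labeled vertex of its left neighbor, so the combined label set is simply the disjoint union of the factors and $\plh$ corresponds to ordinary product of EGFs. The first specification therefore yields $\Ba(z)=z+\sum_{\ell\ge 2}\Ba(z)^\ell=z+\Ba(z)^2/(1-\Ba(z))$, which rearranges to the quadratic $2\Ba^2-(1+z)\Ba+z=0$; selecting the branch with $\Ba(0)=0$ gives the stated closed form. For $\cBra$ the decomposition reads $\Bra=z+\Bra\sum_{\ell\ge 2}\ell\,\Ba^{\ell-1}=z+\Bra(2\Ba-\Ba^2)/(1-\Ba)^2$, which solves to $\Bra=z(\Ba-1)^2/(2\Ba^2-4\Ba+1)$, as claimed. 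The main technical subtlety is the first step: verifying that the inner face $F$ is canonically determined by $B$ (using uniqueness of the outer embedding combined with the orientation data carried by $\cBa$) and that the piece-extraction $B\mapsto(B_1,\ldots,B_\ell)$ is functorial with respect to label-preserving bijections, so that the construction is a genuine species isomorphism rather than merely a counting bijection.
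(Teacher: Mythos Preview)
Your argument is correct and follows essentially the same route as the paper: both decompose a $\cBa$-object along the bounded face incident to the marked Hamilton edge into an ordered sequence of smaller $\cBa$-objects, then read off the specifications for $\cBa$ and $\cBra$ and solve the resulting functional equations. You supply considerably more detail (the explicit labelling convention for $\plh$, the quadratic for $\Ba$, and the derivation of $\Bra$), but the underlying construction is identical to the paper's.
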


\begin{figure}[t]
	\centering
	\begin{minipage}{1\textwidth}
		\centering
  		\includegraphics[width=0.6\textwidth]{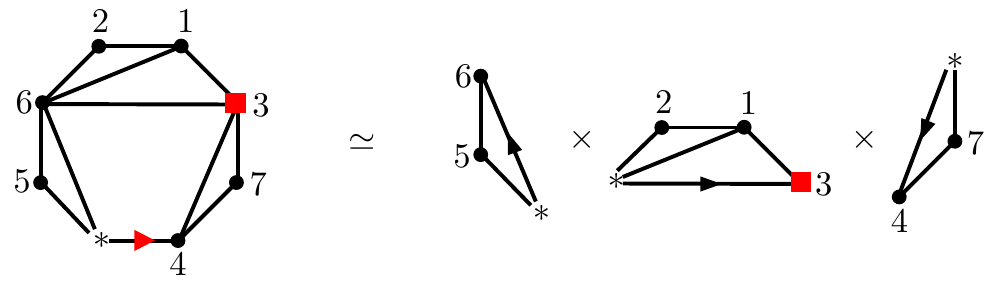}

  		\caption{Decomposition of a $\protect \cBra$-object into a $\protect \cBa \times \protect \cBra \times \protect \cBa$-object. The root is marked with a square.}
   		\label{fi:decomp0}
   	\end{minipage}
\end{figure}

\begin{proof}
Let $B \in \cBa$ with $|B| \ge 2$ be a derived outerplanar block, rooted at an oriented edge $\overrightarrow{e}$ of its Hamilton cycle $C$ such that the $*$-vertex is the tail of $\overrightarrow{e}$.
Given a drawing of $B$ such that $C$ is the boundary of the outer face, the root face is defined to be the bounded face~$F$ whose border contains $\overrightarrow{e}$. Then $B$ may be identified with the sequence of blocks along $F$, ordered in the reverse direction of the edge $\overrightarrow{e}$. This yields the decompositions illustrated in Figures  \ref{fi:decomp1} and \ref{fi:decomp0}. Solving the corresponding equations of generating functions yields~\eqref{eq:egfDD}.
\end{proof}
\noindent The equation determining the $y = C^\bullet(\rhoc)$ is 
$ 1 = B'^\bullet(y) = \frac{1}{2}(y + \Bra(y))$. We obtain that 
$y \approx 0.17076$ is the unique root of the polynomial
$
3\,{z}^{4}-28\,{z}^{3}+70\,{z}^{2}-58\,z+8
$
in the interval $[0,\frac{1}{2}]$ and hence $\sigma^2 = 1 + B'''(y)y^2 \approx 95.3658$. It remains to compute $\sca$.

\begin{lemma}
\label{le:out3}
We have that $\Ex{\shp(\Gamma \Bra(y))} = {\frac {8{\w}^{4}-16{\w}^{3}+4\w-1}{ \left( 4{
\w}^{3}-6{\w}^{2}-2\w+1 \right)  \left( 2\w-1
 \right) }} \approx 5.46545$ with $\w := \Ba(y) \approx 0.27578$.
\end{lemma}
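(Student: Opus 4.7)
The plan is to convert the recursive specification of $\cBra$ from Lemma~\ref{lem:specDDb} into a \emph{linear} system for generating functions. I would first introduce, for each $(B, r) \in \cBra$, the auxiliary statistic $t(B, r) = d_B(h, r)$ alongside $s(B, r) = \shp(B, r) = d_B(*, r)$, where $h$ denotes the head of the marked oriented edge. Since $*$ and $h$ are adjacent, $|s - t| \le 1$, so I partition $\cBra = \cBra_{-} \sqcup \cBra_{0} \sqcup \cBra_{+}$ according to the sign of $s - t$ and let $\Bra_\epsilon(z)$ and $S_\epsilon(z)$ denote the corresponding exponential generating functions of objects and of $s$-sums. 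The single edge contributes $z$ to both $\Bra_{+}$ and $S_{+}$.

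Next I would analyse how the pair $(s, t)$ behaves under the decomposition. A combined object built from $k \ge 2$ sub-blocks around the root face, with the root-carrying sub-block at position $j$ and inner parameters $(s_j, t_j)$, has any shortest $*$-to-$r$ path exiting that sub-block through either its tail $a_j$ or its head $b_j$. Because the root face is a cycle of length $k+1$ bounded by the $k$ marked edges of the sub-blocks together with the outer marked edge, the relevant face distances $\alpha_{j,k} = d(a_1, a_j) = \min(j-1, k-j+2)$, $\beta_{j,k} = d(a_1, b_j)$, $\gamma_{j,k} = d(b_k, a_j)$, $\delta_{j,k} = d(b_k, b_j)$ are explicit and yield
\[
s_{\mathrm{comb}} = \min(s_j + \alpha_{j,k},\, t_j + \beta_{j,k}), \qquad
t_{\mathrm{comb}} = \min(s_j + \gamma_{j,k},\, t_j + \delta_{j,k}).
\]
A short case check shows $\beta - \alpha,\ \delta - \gamma \in \{-1, 0, 1\}$, so for each $k$ the interval $1 \le j \le k$ splits into a \emph{left region} $j \le \lfloor(k-1)/2\rfloor$ where both minima are attained by the tail path (combined type $-$, $s_{\mathrm{comb}} = s_j + j - 1$), a symmetric \emph{right region} (combined type $+$, $s_{\mathrm{comb}} = t_j + k - j + 1$), and one or two \emph{middle positions} according to the parity of $k$. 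For odd $k$ the single middle position preserves the inner type and gives $s_{\mathrm{comb}} = s_j + (k-1)/2$; for even $k$ the two middle positions mix the three subclasses in a controlled way, with the resulting $(s_{\mathrm{comb}}, t_{\mathrm{comb}})$ still affine in $(s_j, t_j)$ within each finite sub-case.

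Finally I would sum the contributions over $(k, j)$, using the identities $\sum_{m \ge 1} \Ba(z)^{2m} = \Ba^2/(1-\Ba^2)$, $\sum_{m \ge 1} m \Ba(z)^{2m} = \Ba^2/(1-\Ba^2)^2$, and their odd-index analogues, to obtain a linear system of six equations in the six unknowns $\Bra_{-}, \Bra_{0}, \Bra_{+}, S_{-}, S_{0}, S_{+}$ with coefficients rational in $\Ba(z)$ and $z$. Solving the system, evaluating at $z = y$ with $\Ba(y) = \w$ and $\Bra(y) = y\Ba'(y) = y(1-\w)^2/(1-4\w+2\w^2)$, and summing $S = S_- + S_0 + S_+$ would deliver the claimed rational expression $\Ex{\shp(\Gamma \Bra(y))} = S(y)/\Bra(y)$. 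The hard part will be the bookkeeping for the two even-$k$ middle positions: they are the only indices where the three subclasses couple nontrivially, and they are what forces us to keep track of the full six-dimensional system instead of a smaller reduction.
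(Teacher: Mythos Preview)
Your proposal is correct and follows the same recursive idea as the paper: decompose a $\cBra$-object around its root face, observe that a shortest $*$-to-root path enters the root-carrying sub-block through one of its two attachment vertices, and turn the resulting min-recursion into a linear system whose solution at $z=y$ yields the expectation.

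The difference is purely in the choice of auxiliary quantities. You partition $\cBra$ by the sign of $s-t$ and track six generating functions $(\Bra_{-},\Bra_{0},\Bra_{+},S_{-},S_{0},S_{+})$. The paper instead works directly with the three expectations $\mu_S=\Ex{S}$, $\mu_{S'}=\Ex{S'}$, and $\mu_M=\Ex{\min(S,S')}$; these are linear combinations of your data (e.g.\ $\mu_M = (S(y)-\Bra_{+}(y))/\Bra(y)$), so the systems are equivalent. The key observation that makes the paper's reduction to three variables possible is that in the non-middle positions the combined distances depend only on $S$ or $S'$ (not on the sign of $S-S'$), and at the single ``mixing'' position one needs exactly $M=\min(S,S')$---so introducing $M$ directly avoids splitting into sign classes and gives a $3\times 3$ system in place of your $6\times 6$ one. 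Your route works, but expect roughly twice the bookkeeping; the paper's choice of $M$ is precisely the shortcut that absorbs your even-$k$ middle-position coupling into a single extra unknown.
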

Since $B'^\bullet(y)=1$ this implies with Lemma~\ref{lem:eqspOP} that
\[ \sca = \Ex{\shp(\Gamma \cB'^\bullet(y))} = \frac{y}{2} + \left(1-\frac{y}{2}\right) \Ex{\shp(\Gamma \Bra(y))} \approx 5.08418,\]
and the this completes the proof of Proposition~\ref{pr:scouterplanar}.
\begin{proof}[Proof of Lemma \ref{le:out3}]
The rules for Boltzmann samplers translate the specification of $\cBra$ given in Lemma~\ref{lem:specDDb} into the following sampling algorithm.

\medskip
\begin{tabular}{lll}
$\Gamma \Bra(x)$:  
		& $s \leftarrow $ drawn with $\Pr{s=2} = \frac{x}{\Bra(x)}$ and $\Pr{s=i} = (i-1){\left(\Ba(x)\right)}^{i-2}$ for $i \ge 3$ \\
		& \IF $s = 2$ \THEN\\
		& \qquad \RETURN a single directed edge $(*, 1)$\\
		& \ELSE \\
		& \qquad $\gamma \leftarrow $ a cycle $\{v_1, v_2\}, \{v_2, v_3\}, \ldots, \{v_{s},v_1\}$ with $v_1 = *$\\
		& \qquad $t \leftarrow $ a number drawn uniformly at random from the set $[s-1]$ \\
		& \qquad $\gamma \leftarrow$ identify $(v_t, v_{t+1})$ with the root-edge of $\gamma_t \leftarrow \Gamma \Bra(x)$ \\
		& \qquad \FOR \EACH $i \in [s-1] \setminus \{t\}$\\
		& \qquad \qquad $\gamma \leftarrow$ identify $(v_i, v_{i+1})$ with the root-edge of $\gamma_i \leftarrow \Gamma \Ba(x)$ \\
		& \qquad \SPENDFOR \\
		& \qquad root $\gamma$ at the directed edge $(*, v_{s})$ \\
		& \qquad \RETURN $\gamma$ relabeled uniformly at random\\
		& \ENDIF
	\end{tabular}
\medskip

\noindent Given a graph $H$ in $\cBra$ let $S(H)$, $S'(H)$ denote the length of a shorted past in $H$ from the root-vertex to the tail $v_1 = *$ or head $v_{s}$ of the directed root-edge, respectively. Clearly, $S(H)$ and $S'(H)$ differ by at most one. It will be convenient to also consider their minimum $M(H)$. Let $S$, $S'$ and $M'$ denote the corresponding random variables in the random graph $\mathsf{D}$ drawn according to the sampler $\Gamma \Bra(x)$. For any integers $\ell,k \ge 0$ with $\ell + k \ge 1$ let~$\mathsf{D}_{\ell,k}$ be the random graph $\mathsf{D}$ conditioned on the event that the graph is not a single edge and that in the root face  $\{v_1,v_2\}, \{v_2, v_3\} \ldots, \{v_{s},v_1\}$ the length of the path $v_1v_2 \ldots v_t$ equals~$\ell$ and the length of the path $v_{t+1} v_{t+2} \ldots v_{s}$ equals $k$. Note that the probability for this event equals
\[
	p_{\ell,k} = \Pr{s=\ell+k+2} \Pr{t=\ell+1 \mid s=\ell+k+2} = {\left(\Ba(x)\right)}^{k+\ell}.
\]
We denote by $S_{\ell,k}$, $S'_{\ell,k}$ and $M_{\ell,k}$ the corresponding distances in the conditioned random graph $\mathsf{D}_{\ell,k}$. Summing over all possible values for $k$ and $\ell$ we obtain
\begin{align*}
\Ex{S} = \frac{x}{\Bra(x)} + \sum_{k+\ell \ge 1} \Ex{S_{\ell,k}} p_{\ell,k}, \,\quad
\Ex{S'} = \sum_{k+\ell \ge 1} \Ex{S'_{\ell,k}} p_{\ell,k}, \,\quad
\Ex{M} = \sum_{k+\ell \ge 1} \Ex{M_{\ell,k}} p_{\ell,k}.
\end{align*}
Any shortest path from $*$ or $v_s$ to the root-vertex of a $\cB'^\bullet$-graph $H$ ($\ne$ a single edge) must traverse the boundary of the root-face in one of the two directions until it reaches the root-edge of the attached $\cB'^\bullet$-object $H'$. From there it follows a shortest path to the root in the graph $H'$. Hence for all $k,\ell \ge 0$ with $k+\ell \ge 1$ the following equations hold.
\begin{align*}
S_{\ell,k} &\eqdist \min\{\ell+S, k+1+S'\}, \\
S'_{\ell,k} &\eqdist \min\{\ell+1+S, k+S'\}, \\
M_{\ell,k} &\eqdist \min\{\ell+S, k+S'\}.
\end{align*}
Since $S$ and $S'$ differ by at most one, this may be simplified further depending on the parameters $k$ and $\ell$ as follows:
\begin{alignat*}{6}
S_{\ell,k} &\eqdist \begin{cases}
	\ell + S, &\ell \le k\\
	\ell + M, &\ell = k+1 \\
	k+1 + S', &\ell \ge k+2
	\end{cases}, \quad& 
	S'_{\ell,k} \eqdist \begin{cases}
	k + S', &k \le \ell\\
	k + M, &k = \ell+1 \\
	\ell+1 + S, &k \ge \ell+2
	\end{cases},
\end{alignat*}
and
\begin{alignat*}{6}
	M_{\ell,k} &\eqdist \begin{cases}
	\ell + S, &\ell \le k-1\\
	\ell + M, &\ell = k \\
	k + S', &\ell \ge k+1
	\end{cases}.
\end{alignat*}
Using this and~\eqref{eq:egfDD}, we arrive at the system of linear equations with parameter $\w = \Ba(x)$ and variables $\mu_{S} = \Ex{S}$, $\mu_{S'}=\Ex{S'}$ and $\mu_{M}=\Ex{M}$
\[
\begin{split}
	\mu_S
		&= {\frac {2{\w}^{2}-4\w+1}{ ( \w-1 ) ^{2}}} + \sum_{k\ge1}\sum_{\ell=0}^{k}(\ell + \mu_S)\w^{\ell+k} +  \sum_{\ell\ge1}(\ell + \mu_M) \w^{2\ell-1} +\sum_{k\ge 0}\sum_{\ell \ge k+2}(k+1+\mu_{S'})\w^{\ell+k},	\\
	\mu_{S'}
		&= \sum_{\ell \ge1}\sum_{k=0}^\ell(k + \mu_{S'})\w^{\ell+k} + \sum_{k\ge1}(k + \mu_{M})\w^{2k-1} + \sum_{\ell \ge 0} \sum_{k\ge\ell+2}(\ell+1+\mu_{S})\w^{\ell+k}, \\
	\mu_{M}
		&= \sum_{k\ge2} \sum_{\ell=0}^{k-1}(\ell+\mu_{S})\w^{\ell+k} + \sum_{\ell\ge1}(\ell+\mu_{M})\w^{2\ell} +\sum_{k\ge0} \sum_{\ell\ge k+1} (k+\mu_{S'})\w^{\ell+k}.
\end{split}
\]
Simplifying the equations yields the equivalent system $A \cdot (\Ex{S}, \Ex{S'}, \Ex{M})^\top = b$ with
\begin{align*}
A = 
\begin{pmatrix}
2{\w}^{4} -4{\w}^{3}+3\w-1  & -{\w}^{3}+{\w}^{2} & {\w}^{3}-2{
\w}^{2}+\w  \\
-{\w
}^{3}+{\w}^{2} & 2{\w}^{4}-4{\w}^{3}+
3\w-1 & {\w}^{3}-2{\w}^{2}+\w\\
-{\w}^{2}+\w & -{\w}^{2}+\w & 2{\w}^{4}-4{\w}^{3}+{\w}^{2}+2 \w-1 
\end{pmatrix}
\end{align*}
and
\begin{align*}
b^\top = 
\begin{pmatrix}
2{\w}^{4}-4{\w}^{3}-{\w}^
{2}+3\w-1 & -\w & -{\w}^{2}
\end{pmatrix}.
\end{align*}
\noindent For $x=y\approx 0.17076$ we obtain $w \approx 0.27578$ and $\text{det}(A) \approx -0.00799 \ne 0$. Solving the system of linear equations yields
\begin{align*}
\Ex{S}&={\frac {8{\w}^{4}-16{\w}^{3}+4\w-1}{ \left( 4{
\w}^{3}-6{\w}^{2}-2\w+1 \right)  \left( 2\w-1
 \right) }} \approx 5.46545, \\
\end{align*}
and the proof is completed.
\end{proof}

\section*{Acknowledgements}
We would like to express our thanks to Gr\'egory Miermont and Igor Kortchemski for helpful suggestions and references regarding the Continuum Random Tree. The  second author would like to thank his wife for her support during the writing of this paper.

\bibliographystyle{abbrv}
\bibliography{sclimit}

\end{document}